\newtheorem{thm}{Theorem}[section]
\newtheorem{prop}[thm]{Proposition}
\newtheorem{cor}[thm]{Corollary}
\newtheorem{lem}[thm]{Lemma}
\theoremstyle{definition}
\newtheorem{defn}[thm]{Definition}
\theoremstyle{remark}
\newtheorem{remark}[thm]{Remark}
\newtheorem{problem}[thm]{Problem}
\begin{document}

\title{The Local Lifting Problem for $D_4$}
\author{Bradley Weaver}
\address{University of Virginia, 141 Cabell Drive, Charlottesville, VA 22904}
\email{brw4sz@virginia.edu}

\begin{abstract}
For a prime $p$, a cyclic-by-$p$ group $G$ and a $G$-extension $L|K$ of complete discrete valuation fields of characteristic $p$ with algebraically closed residue field, the local lifting problem asks whether the extension $L|K$ lifts to characteristic zero. In this paper, we characterize $D_4$-extensions of fields of characteristic two, determine the ramification breaks of (suitable) $D_4$-extensions of complete discrete valuation fields of characteristic two, and solve the local lifting problem in the affirmative for every $D_4$-extension of complete discrete valuation fields of characteristic two with algebraically closed residue field; that is, we show that $D_4$ is a local Oort group for the prime 2.
\end{abstract}
\maketitle
\section{Introduction}
For a prime $p$, a cyclic-by-$p$ group $G$ and a $G$-extension $L|K$ of complete discrete valuation fields of characteristic $p$ with algebraically closed residue field, the \emph{local lifting problem} (see Problem~\ref{llpgt}) asks whether the extension $L|K$ \emph{lifts to characteristic zero} (a notion whose definition we shall recall in Subsection~\ref{llpss}). The chief aim of this paper is to answer the local lifting problem in the affirmative for the prime $p = 2$ and the group $G = D_4$ in all cases, that is, to show that $D_4$ is a \emph{local Oort group} for $p=2$. 
\let\thefootnote\relax\footnote{\emph{2010 Mathematics Subject Classification}: 14H37, 12F10 Primary; 13B05, 14B12 Secondary.}
\footnote{\emph{Date}: June 23rd, 2017.}

\subsection{The Global Lifting Problem}

The local lifting problem, as stated above, is (upon reformulation) a natural local correlate to the global lifting problem, which may be stated as follows:

\begin{problem}[Global Lifting Problem]\label{glprob}
Suppose that $Y$ is a smooth proper curve over an algebraically closed field $k$ of positive characteristic $p$, and that $\iota: G \to \text{Aut}_k(Y)$ is a faithful action of a finite group $G$ on $Y$ by $k$-automorphisms.
Do there exist a finite integral extension $R$ of the Witt ring $W(k)$, a flat relative curve $\widetilde{Y} \to \text{Spec } R$ and a faithful action $\displaystyle\tilde{\iota}: G \to \text{Aut}_R(\widetilde{Y})$ such that
\begin{enumerate}
\item $\displaystyle\widetilde{Y} \times_R k \cong Y$, and
\item the action $\tilde{\iota}$ on $\widetilde{Y}$ restricts to the action $\iota$ on $Y$?
\end{enumerate}
\end{problem}

\begin{remark} The \emph{Witt ring}, or \emph{ring of Witt vectors}, $W(k)$ of $k$ is the unique complete discrete valuation ring, necessarily of characteristic zero, with uniformizer $p$ and residue field $k$~\cite{LocalFields}.
\end{remark}

If, for a particular $Y$ and $\iota$, the global lifting problem for that curve and action is answered in the affirmative, then we say both that $\iota$ \emph{lifts to characteristic zero} and that $Y$ (with $G$-action $\iota$) \emph{lifts to characteristic zero}. Moreover, we say that $\tilde{\iota}$ and $\widetilde{Y}$ (with $G$-action $\tilde{\iota}$) are, respectively, \emph{lifts} of $\iota$ and of $Y$ (with $G$-action $\iota)$ over $R$.

\begin{defn} A finite group $G$ is an \emph{Oort group} for an algebraically closed field $k$ of characteristic $p$ if every faithful $G$-action on every smooth proper curve over $k$ by $k$-automorphisms lifts to characteristic zero. If $G$ is an Oort group for every algebraically closed field of characteristic $p$, then $G$ is an \emph{Oort group} for the prime $p$.
\end{defn}

The following theorem is a consequence of Grothendieck's results on tame lifting, to wit, of Expos\'{e} XIII, Corollaire 2.12 in \cite{SGA1}, and implies that there is no obstruction to lifting in the tame case. For an exposition, see~\cite{Wew99}.

\begin{thm}[Grothendieck]\label{tamelift} Suppose that $G$ is a finite group with order prime to $p$. Then $G$ is an Oort group for $p$.
\end{thm}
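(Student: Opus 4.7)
The plan is to reduce the global lifting problem to a union of local lifting problems around the ramification points, solve each local problem explicitly using the tameness of the action, and then patch the resulting lifts together with a lift of the quotient curve.

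First, I would lift the quotient $X := Y/G$, which is a smooth proper curve over $k$, to a smooth proper curve $\widetilde{X}$ over the Witt ring $W(k)$; this is possible because smooth curves have unobstructed deformation theory (equivalently, $\mathcal{M}_g$ is formally smooth over $\operatorname{Spec} \mathbb{Z}$, so any smooth curve over $k$ admits a smooth formal lift which then algebraizes by Grothendieck's existence theorem). At each branch point $x \in X$ of the cover $Y \to X$, the inertia group $I_x \subset G$ acts on the completed local ring $\widehat{\mathcal{O}}_{Y,y} \cong k[[s]]$ for $y$ lying over $x$. Since $|G|$ is prime to $p$, the subgroup $I_x$ has order coprime to $p$, and any such subgroup of $\operatorname{Aut}(k[[s]])$ is conjugate (after a suitable change of uniformizer) to a cyclic group acting by $s \mapsto \zeta s$ for an appropriate root of unity $\zeta \in k^\times$. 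Hence the local extension is of tame Kummer form $k[[t]] \hookrightarrow k[[s]]$ with $s^{n_x} = t$, where $n_x = |I_x|$.

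Next, I would lift each such local extension to $R[[T]] \hookrightarrow R[[S]]$ with $S^{n_x} = T$ and $I_x$-action $S \mapsto \widetilde{\zeta}\, S$, where $R$ is a finite integral extension of $W(k)$ containing all required $n_x$-th roots of unity $\widetilde{\zeta}$ lifting $\zeta$. Finally, I would invoke formal patching (\`a la Harbater--Stevenson, or Grothendieck's formal GAGA) to glue these $I_x$-equivariant local lifts to the global lift $\widetilde{X}$ along the branch locus, producing a flat relative curve $\widetilde{Y} \to \operatorname{Spec} R$ with a faithful $G$-action $\widetilde{\iota}$ extending $\iota$.

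The main obstacle is showing that this patching genuinely produces a \emph{faithful} $G$-action on the global scheme with the prescribed reduction, rather than merely a formal lift of each piece. This rests on the local--global comparison theorems for formal schemes together with the key fact that, when the ramification is tame, the equivariant local deformation functor at each branch point is formally smooth over the deformation functor of $(X,x)$, so no obstruction can propagate to the global setting. That last observation is essentially the content of Expos\'{e} XIII, Corollaire 2.12 in \cite{SGA1}, which I would cite rather than reprove.
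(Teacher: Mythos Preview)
The paper does not actually prove this theorem: it simply records it as a consequence of Expos\'{e}~XIII, Corollaire~2.12 in \cite{SGA1} and points to \cite{Wew99} for an exposition. So there is nothing to compare your argument against in the paper itself.

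Your sketch is the standard modern proof strategy and is sound: lift the quotient curve $X=Y/G$ using unobstructed deformation theory, observe that each local inertia action is cyclic of order prime to $p$ and hence linearizable to $s\mapsto \zeta s$, lift each such local Kummer action explicitly over a suitable $R$, and then glue via a local-to-global principle. This is precisely the shape of the argument in the references the paper cites, and indeed the paper later quotes Garuti's local-to-global principle (from \cite{Gar96}) in exactly this spirit. The one point I would tighten is the final paragraph: the relevant input is not so much Harbater--Stevenson patching as the equivariant formal-versus-algebraic comparison already in SGA1 (or Garuti's version), which you do cite at the end; with that, the ``faithful global action with prescribed reduction'' claim is immediate rather than an obstacle.
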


Furthermore, in~\cite{OSS89}, Oort, Sekiguchi and Suwa proved the following:

\begin{thm}\label{plift} For all $m$ such that $p\nmid m$, the group $\mathbb{Z}/pm\mathbb{Z}$ is an Oort group for $p$.
\end{thm}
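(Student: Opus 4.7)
The plan is to reduce the global lifting problem to a local one at each branch point of the quotient map $Y \to Y/G$ via a formal patching argument. At a ramified point $y \in Y$, the stabilizer $I_y \le G$ is cyclic, hence of the form $\mathbb{Z}/p^a m'$ with $a \in \{0,1\}$ and $m' \mid m$, and the $I_y$-action on the completed local ring $\widehat{\mathcal{O}}_{Y,y}$ determines a cyclic Galois extension of complete discrete valuation fields in characteristic $p$. Lifting the $G$-curve is then equivalent to lifting each of these local extensions and reassembling them, so I would attack the local problem first.

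Since $\gcd(p, m') = 1$, the inertia splits as $I_y \cong \mathbb{Z}/p^a \times \mathbb{Z}/m'$, and the tame $\mathbb{Z}/m'$ factor lifts by Grothendieck's theorem (Theorem~\ref{tamelift}). The essential task therefore reduces to the local lifting of a single $\mathbb{Z}/p$-extension $L|K$, compatibly with any auxiliary tame part. By Artin--Schreier theory, $L = K(y)$ with $y^p - y = f$ for some $f \in K$, and after the standard normalization I may assume $v_K(f) = -d$ with $\gcd(d, p) = 1$; here $d$ is the unique ramification break of $L|K$.

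To produce the characteristic-zero lift of such an extension I would follow the Sekiguchi--Suwa approach: construct, over a sufficiently ramified finite extension $R$ of $W(k)$ containing a primitive $p$th root of unity, a smooth affine group scheme $\mathcal{G}$ whose generic fiber is $\mu_p$ and whose special fiber is $\mathbb{Z}/p\mathbb{Z}$, and then exhibit the given Artin--Schreier extension as the reduction of a $\mathcal{G}$-torsor over $\mathrm{Spec}(R[[t]])$, generically of Kummer form $z^p = g(t)$. The parameters of $g$, in particular the locations and multiplicities of its zeros, must be chosen so that the branch divisor of the generic Kummer cover collides onto the single wild branch point modulo $p$ and the reduction recovers $y^p - y = f$ up to isomorphism; the local lifts are then reassembled into a global lift $\widetilde{Y}$ via Grothendieck's existence theorem on proper formal schemes.

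The hardest step, I expect, is the explicit construction of the interpolating group scheme $\mathcal{G}$ together with the verification that every normalized Artin--Schreier equation arises as the reduction of some such $\mathcal{G}$-torsor. This requires delicate matching between the Kummer datum $g$ and the Artin--Schreier representative $f$ through the Kummer--Artin--Schreier comparison, and forces a careful choice of both the uniformizer and the ramification of $R/W(k)$ in terms of the conductor $d$. A secondary difficulty is ensuring that the formal patching step produces an \emph{algebraic} lift and not merely a formal one, and that the tame and wild data at each point interact compatibly when the inertia is not of pure $p$-power order; both should be tractable given Grothendieck's existence theorem and the compatibility between Theorem~\ref{tamelift} and the Sekiguchi--Suwa construction.
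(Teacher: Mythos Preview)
The paper does not supply its own proof of this theorem; it merely attributes the result to Oort, Sekiguchi and Suwa~\cite{OSS89} and moves on. Your outline is, in substance, the strategy of that cited paper: reduce the global problem to local lifting at each branch point, dispose of the prime-to-$p$ inertia via tame lifting, and lift the residual $\mathbb{Z}/p$-extension by realizing the given Artin--Schreier cover as the special fiber of a torsor under a smooth group scheme over a suitable $R/W(k)$ that interpolates between $\mu_p$ and $\mathbb{Z}/p\mathbb{Z}$. So there is nothing to compare against in the present paper, and your plan matches the source it cites.

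One small caveat: the local-to-global principle you invoke is recorded in this paper as Garuti's 1996 result, which postdates~\cite{OSS89}; the original argument carries out the formal patching for cyclic covers directly, so your sketch should either incorporate that step or acknowledge that you are appealing to the later, more general statement. Apart from this chronological wrinkle, the proposal is sound.
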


\subsection{The Local Lifting Problem}\label{llpss}

Let $k$ be an algebraically closed field of positive characteristic $p$, let $Y$ be a smooth proper curve over $k$, and let $\iota: G \to \text{Aut}_k(Y)$ be a faithful action of a finite group $G$ on $Y$ by $k$-automorphisms. For every point $P$ of $Y$, the action $\iota$ induces a faithful action $\iota_P$ by continuous $k$-automorphisms of the inertia group $I_P$ of $G$ at $P$ on the complete local ring of $Y$ at $P$. Since this complete local ring is necessarily isomorphic to a power series ring over $k$ in one variable, the induced action $\iota_P$ prompts the local lifting problem.

\begin{problem}[Local Lifting Problem]
Suppose that a finite group $G$ has a faithful action $\iota: G \to \text{Aut}_{k, \text{cont}}(k[[t]])$ on the power series ring $k[[t]]$ by continuous $k$-automorphisms.
Do there exist a finite integral extension $R$ of the Witt ring $W(k)$ and a faithful action $\tilde{\iota}: G \to \text{Aut}_{R, \text{cont}}(R[[T]])$ on the power series ring $R[[T]]$ such that
\begin{enumerate} 
\item $T$ reduces to $t$ under the canonical map $R \to k$, and
\item the action $\tilde{\iota}$ restricts to the action $\iota$?
\end{enumerate}
\end{problem}

Analogously to the global setting, we say that $\iota$ \emph{lifts to characteristic zero} if such an action $\tilde{\iota}$ exists, and that $\tilde{\iota}$ is a lift of $\iota$.

\begin{defn}
Let $G$ be a finite group. If every faithful $G$-action by continuous $k$-automorphisms on the power series ring $k[[t]]$ lifts to characteristic zero, then $G$ is a \emph{local Oort group} for $k$. If $G$ is a local Oort group for all algebraically closed fields of characteristic $p$, then $G$ is a \emph{local Oort group} for the prime $p$.
\end{defn}

\begin{remark} Any faithful $G$-action by continuous $k$-automorphisms on a power series ring $k[[t]]$ over $k$ induces a Galois extension $k[[t]]^G \to k[[t]]$ of complete discrete valuation rings with Galois group $G$. As shown, \emph{e.g.}, in Chapter IV of~\cite{LocalFields}, the Galois group of any finite Galois extension of complete discrete valuation rings is isomorphic to a cyclic-by-$p$ group, that is, to a group isomorphic to $P \rtimes \mathbb{Z}/m\mathbb{Z}$, where $P$ is a $p$-group and $m$ is prime to $p$. We shall thus, in discussing local Oort groups for $p$, consider only cyclic-by-$p$ groups.
\end{remark}

Using the Galois extension of complete discrete valuation rings induced by a faithful $G$-action by continuous $k$-automorphisms on $k[[t]]$, we may reformulate the local lifting problem as follows:

\begin{problem}[Local Lifting Problem, Galois Theory Reformulation]\label{llpgt}
Let $A$ be a finite Galois extension of $k[[t]]$ with Galois group $G$. Do there exist a finite integral extension $R$ of the Witt ring $W(k)$ and a $G$-Galois extension $\widetilde{A}$ of $R[[T]]$ such that
\begin{enumerate}
\item $\widetilde{A} \otimes_R k \cong A$, and
\item the Galois action on $\widetilde{A}$ over $R[[T]]$ restricts to the Galois action on $A$ over $k[[t]]$?
\end{enumerate}
\end{problem}

If such an $\widetilde{A}$ exists, we say that the extension $A|k[[t]]$ \emph{lifts to characteristic zero}, and, by analogy, that the corresponding extension $\text{Frac}(A)|k((t))$ of complete discrete valuation fields \emph{lifts to characteristic zero}, as well.

The close connection between the global and local lifting problems is manifest in the presence, in this setting, of a local-to-global principle, proven by Garuti in~\cite{Gar96}.

\begin{thm}[Local-to-Global Principle] Let $Y$ be a smooth proper curve over $k$, let $\iota$ be a faithful action of a finite group $G$ on $Y$ by $k$-automorphisms, and let $P_i, 1\le i\le N$, denote the points of $Y$ ramified under $\iota$.  Then $\iota$ lifts to characteristic zero if and only if, for each point $P_i$ of $Y$, the induced action $\iota_{P_i}$ on the complete local ring of $Y$ at $P_i$ lifts to characteristic zero.
\end{thm}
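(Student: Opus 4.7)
The forward direction is routine: given a global lift $\widetilde{Y} \to \mathrm{Spec}\,R$ of $(Y,\iota)$, each ramified point $P_i \in Y$ has at least one lift $\widetilde{P}_i \in \widetilde{Y}$ fixed by the inertia group $I_{P_i}$, and completing $\widetilde{Y}$ at $\widetilde{P}_i$ gives an $I_{P_i}$-stable power series ring $R[[T]]$ whose mod-$\mathfrak{m}_R$ reduction is the completed local ring at $P_i$; this provides the required local lift of $\iota_{P_i}$. So the entire content lies in the converse: assembling given local lifts into one global lift.

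My plan for the converse is the classical formal-patching strategy. First, I would pass to the quotient: $X := Y/G$ is a smooth proper curve over $k$, and by the unobstructedness of deformations of smooth curves (\emph{e.g.}, via Grothendieck's existence theorem applied to the versal deformation) $X$ lifts to a smooth proper relative curve $\widetilde{X}$ over $\mathrm{Spec}\,R$, after possibly enlarging $R$. Next, choose $R$-points $\widetilde{P}_i \in \widetilde{X}$ reducing to the branch points $\overline{P}_i \in X$ of $Y \to X$; identifying $\widehat{\mathcal{O}}_{\widetilde{X},\widetilde{P}_i} \cong R[[T_i]]$, the hypothesized local lift $\widetilde{\iota}_{P_i}$ gives a Galois cover $\mathrm{Spec}\,\widetilde{B}_i \to \mathrm{Spec}\,R[[T_i]]$ lifting the complete local cover of $X$ by $Y$ at $\overline{P}_i$. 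Away from the $\overline{P}_i$, the cover $Y \to X$ is étale, so by Grothendieck's lifting theorem for étale covers (SGA 1, Exposé I) it lifts uniquely, compatibly with the $G$-action, to an étale $G$-cover $\widetilde{U} \to \widetilde{X} \setminus \{\widetilde{P}_i\}$.

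The heart of the argument is then to glue these two pieces. Restrict the formal local lift $\widetilde{B}_i$ to the punctured formal disc $\mathrm{Spec}\,R((T_i))^{\wedge}$ and restrict the étale lift $\widetilde{U}$ to the same punctured formal neighborhood of $\widetilde{P}_i$; both are $G$-covers of the punctured formal disc lifting the same $G$-cover in characteristic $p$. Using that étale covers of the punctured formal disc in characteristic zero are rigid (no nontrivial deformations fixing the special fiber, by the étale lifting theorem applied to an annulus), one obtains a canonical identification over the punctured formal discs, which lets one construct a coherent $G$-equivariant formal sheaf of algebras on the formal completion of $\widetilde{X}$ along its special fiber. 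Grothendieck's formal existence theorem (GFGA) then algebraizes this formal $G$-cover to a finite flat $G$-cover $\widetilde{Y} \to \widetilde{X}$ over $R$. A direct check using the explicit local structure shows that $\widetilde{Y}$ is smooth over $R$ (flatness plus smoothness of the special fiber) and reduces to $Y$ with the action $\iota$.

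The main obstacle, as in all formal-patching arguments of this type, is to make the gluing of the local formal covers with the étale cover on the complement canonical and $G$-equivariant; this is where one must invoke the rigidity of étale covers over the punctured formal disc (equivalently, a version of Hensel's lemma / Artin approximation for the annulus) to show that the two natural $G$-covers of the punctured formal disc that one obtains are canonically isomorphic. Once that patching datum is in place, algebraization and the verification of smoothness are essentially formal.
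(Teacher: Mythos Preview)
The paper does not give a proof of this theorem: it is stated as background, attributed to Garuti~\cite{Gar96}, and used as a black box. So there is no in-paper argument to compare your proposal against.

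On its own merits, your outline is the standard formal-patching proof and is correct in shape: lift the quotient curve, lift the \'etale locus via SGA~1, insert the given local lifts on formal discs, glue over the boundaries, and algebraize by GFGA. The one place where your sketch is thin is precisely the step you flag as ``the main obstacle.'' Saying that the two $G$-covers of the punctured formal disc are canonically isomorphic because \'etale covers are rigid is the right intuition, but it does not by itself produce a formal scheme over $\widetilde X$: one still needs an actual patching theorem (Garuti's formal patching of torsors, or Harbater's formal patching, or a Beauville--Laszlo type statement for the pair consisting of the open complement and the completed discs) to manufacture the coherent $G$-equivariant sheaf of algebras from the compatible local pieces. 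The \'etale-lifting/Hensel argument gives you the gluing datum on each boundary; the patching theorem is what turns those data into a single object. In Garuti's paper this is exactly where the work goes, so if you were writing this out you would want to cite a specific patching result rather than leave it at ``rigidity.''
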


\begin{remark} If $P$ is not a ramification point of $\iota$, that is, if the inertia group of $G$ at $P$ is trivial, then the induced action $\iota_P$ lifts to characteristic zero trivially.
\end{remark}

In~\cite{CGH08}, Chinburg, Guralnick and Harbater proved a close relation between Oort groups and local Oort groups.

\begin{thm}[Theorem 2.4 in~\cite{CGH08}]\label{cgh08thm}
Let $G$ be a finite group. Then $G$ is an Oort group for $k$ if and only if every cyclic-by-$p$ subgroup of $G$ is a local Oort group for $k$.
\end{thm}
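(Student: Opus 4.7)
The plan is to attack both directions using Garuti's local-to-global principle, with the forward direction requiring a mechanism to realize a local $H$-action as the completion of a global $G$-cover.

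For the easier \emph{sufficiency} direction (cyclic-by-$p$ subgroups local Oort implies $G$ is Oort), I would start with a faithful $G$-action $\iota\colon G \to \text{Aut}_k(Y)$ on a smooth proper curve $Y/k$ and invoke the local-to-global principle to reduce to lifting the induced action $\iota_P$ at each ramified point $P \in Y$. The inertia group $I_P$ is the Galois group of the extension of complete discrete valuation rings obtained by completing at $P$, hence is cyclic-by-$p$ by the standard structure theorem quoted in the paper; so $I_P$ sits inside $G$ as a cyclic-by-$p$ subgroup and is local Oort by hypothesis. Thus each $\iota_P$ lifts, and the global $\iota$ lifts by the local-to-global principle.

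For the harder \emph{necessity} direction ($G$ Oort implies every cyclic-by-$p$ subgroup is local Oort), I fix a cyclic-by-$p$ subgroup $H \le G$ and a faithful continuous $H$-action $\iota$ on $k[[t]]$, and aim to lift $\iota$. The first step is to invoke the Harbater--Katz--Gabber theorem to realize $\iota$ as the completion at a single wildly ramified point $P_0$ of an $H$-Galois cover $C_0 \to \mathbb{P}^1_k$ that is elsewhere \'etale or at worst tamely ramified. The second step is to induce this cover up to a $G$-action, forming $Y = (G \times C_0)/H$, a disjoint union of $[G:H]$ copies of $C_0$ carrying a faithful $G$-action with quotient $\mathbb{P}^1_k$. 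Since $G$ is Oort for $k$, $Y$ lifts to a $G$-curve $\widetilde{Y}$ over some finite extension $R$ of $W(k)$; completing $\widetilde{Y}$ at the unique lift of $P_0$ in the distinguished copy of $C_0$ inside $Y$ then produces the desired lift of $\iota$ over $R[[T]]$.

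The main obstacle lies in the necessity direction. The Harbater--Katz--Gabber step is standard in the field but indispensable, and verifying that the completion of $\widetilde{Y}$ at the lifted point genuinely recovers a lift of the prescribed local action is delicate but formal. The subtler point is that the induced cover $Y$ is typically disconnected, while the definition of an Oort group is often phrased for connected smooth proper curves; I would resolve this either by extending the Oort property to disconnected curves (trivial, by lifting each component and reassembling via the $G$-action permuting components) or, if one insists on connectedness, by replacing the bare induction step with a fiber-product construction that yields a connected $G$-Galois cover containing $C_0$ as an $H$-subcover while preserving the local picture at $P_0$.
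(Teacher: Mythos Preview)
The paper does not supply a proof of this theorem; it is quoted as Theorem~2.4 of \cite{CGH08} and used as a black box. There is therefore nothing in the paper to compare your argument against, so let me simply assess your sketch on its own terms.

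Your sufficiency direction is fine and is the standard one-line argument via the local-to-global principle.

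In the necessity direction your overall architecture (Harbater--Katz--Gabber globalization, embed into a $G$-cover, lift, re-complete) is the right one and is indeed how \cite{CGH08} proceeds. But your handling of the disconnectedness of the induced cover contains a real gap. Your resolution (1), ``extend the Oort property to disconnected curves by lifting each component and reassembling,'' is circular: when $G$ acts on $Y=(G\times C_0)/H$ it permutes the components transitively with stabilizer $H$, so lifting the $G$-action on $Y$ amounts precisely to lifting the $H$-action on the connected curve $C_0$. That is exactly what you are trying to establish (via $H$ local Oort $\Leftrightarrow$ $H$ Oort for cyclic-by-$p$ $H$), and you cannot deduce it from ``$G$ is Oort'' for connected curves. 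So option (1) does no work.

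Your option (2) is the correct route, but it is not a side remark---it is the entire content of the forward direction. One must produce a \emph{connected} smooth proper $G$-curve over $k$ in which the prescribed local $H$-extension occurs as the completed local ring at some point with inertia group exactly $H$; only then does ``$G$ is Oort'' bite. In \cite{CGH08} this is done via Harbater's formal patching: one builds a connected $G$-Galois cover of a suitable base by patching the HKG $H$-cover (induced up to $G$) to an auxiliary connected $G$-cover along a common trivial piece, arranging that the inertia at the distinguished point remains $H$ with the given local action. Your phrase ``a fiber-product construction'' gestures at this but does not supply it; as written, the necessity direction is incomplete.
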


Moreover, for cyclic-by-$p$ groups, Oort groups for $k$ and local Oort groups for $k$ coincide.

\begin{thm}[Theorem 2.1 in~\cite{CGH17}]\label{cgh17thm}
Let $G$ be a cyclic-by-$p$ group. Then $G$ is an Oort group for $k$ if and only if $G$ is a local Oort group for $k$.
\end{thm}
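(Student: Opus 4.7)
\emph{Proof plan.} The forward implication is immediate from Theorem~\ref{cgh08thm}: if $G$ is an Oort group, then every cyclic-by-$p$ subgroup of $G$ is local Oort, and $G$ itself qualifies because it is cyclic-by-$p$ by hypothesis and is a subgroup of itself.

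For the reverse implication, assume $G$ is cyclic-by-$p$ and local Oort, and aim to apply Theorem~\ref{cgh08thm} in the opposite direction; this requires showing that every cyclic-by-$p$ subgroup of $G$ is local Oort. Any subgroup $H$ of a cyclic-by-$p$ group $G = P \rtimes C$ (with $P$ a normal $p$-Sylow and $C$ cyclic of order prime to $p$) is itself cyclic-by-$p$: the intersection $H \cap P$ is a normal $p$-subgroup of $H$ whose quotient embeds into $C$. So the reverse implication reduces to the \emph{subgroup lemma}: every subgroup $H$ of $G$ is local Oort.

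To prove the subgroup lemma, I would begin with a faithful continuous $H$-action $\iota$ on $k[[t]]$ and construct a faithful continuous $G$-action $\iota'$ on some power series ring $k[[u]]$ whose restriction to $H \hookrightarrow G$ is isomorphic as an $H$-action to $\iota$ (via a suitable $k$-isomorphism $k[[t]] \to k[[u]]$). Granted such an $\iota'$, the local Oort hypothesis on $G$ produces a lift $\widetilde{\iota'}$ of $\iota'$ over some finite integral extension $R$ of $W(k)$; the $H$-restriction of $\widetilde{\iota'}$ on $R[[U]]$ then lifts the $H$-action on $k[[u]]$, and so, by transport along the fixed isomorphism, lifts $\iota$ itself.

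The principal obstacle is the construction of the enlargement $\iota'$ restricting to $\iota$. My plan is to globalize $\iota$ by invoking the Harbater--Katz--Gabber theorem, realizing $\iota$ as the local action at $\infty$ of a smooth projective connected $H$-cover $Y \to \mathbb{P}^1_k$ branched only at $0$ (tamely) and $\infty$, and then to enlarge $Y$ to a connected $G$-cover $Z \to \mathbb{P}^1_k$ with the same branch locus and with inertia $G$ at a unique point above $\infty$, by adjoining compatible Artin--Schreier--Witt and Kummer towers that realize the quotients $P/(H \cap P)$ and $C/\mathrm{image}(H)$. The local $G$-action on the completion of $Z$ at that distinguished point then provides $\iota'$. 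The delicate verifications are that the Galois group of the resulting tower is precisely $G$ with the prescribed semidirect-product action, that no spurious ramification is introduced during the enlargement, and that the $H$-subextension at the chosen point genuinely recovers $\iota$ with its entire upper ramification filtration intact; this is where the cyclic-by-$p$ structure of $G$, combined with the Harbater--Katz--Gabber rigidity of local-global correspondence, must be exploited most carefully.
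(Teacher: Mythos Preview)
The paper does not prove this theorem: it is quoted verbatim as Theorem~2.1 of~\cite{CGH17} and used as a black box, with no argument supplied. There is therefore no in-paper proof to compare against; your proposal goes well beyond what the present paper attempts.

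On the substance of your plan: the forward direction is indeed immediate from Theorem~\ref{cgh08thm}, and your reduction of the reverse direction to the \emph{subgroup lemma} (``$G$ local Oort $\Rightarrow$ every subgroup $H\le G$ is local Oort'') is the correct manoeuvre. The observation that every subgroup of a cyclic-by-$p$ group is again cyclic-by-$p$ is also correct, via Schur--Zassenhaus.

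The one point where your sketch is potentially misleading is the direction of the embedding construction. Extending a faithful $H$-action on $k[[t]]$ to a faithful $G$-action on the \emph{same} ring (up to isomorphism) amounts, on the Galois side, to finding a subfield $K_0\subset k((t))^{H}$ with $k((t))/K_0$ Galois of group $G$ and $\mathrm{Gal}(k((t))/k((t))^H)=H$ sitting inside $G$ as the given subgroup. In the Harbater--Katz--Gabber picture this means descending \emph{below} the base $\mathbb{P}^1=Y/H$, not building a larger curve $Z$ \emph{above} $Y$. Your phrasing ``enlarge $Y$ to a connected $G$-cover $Z\to\mathbb{P}^1$ \ldots by adjoining compatible Artin--Schreier--Witt and Kummer towers'' reads as going upward, which would realize $H$ as a \emph{quotient} of $G$ rather than a subgroup. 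If that is merely loose wording, the strategy is the standard one and is essentially how the result is established in~\cite{CGH17}; if you genuinely intend to build $Z\to Y$, the construction will not produce what you need.
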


We now briefly recall the known results concerning local Oort groups. From Theorems~\ref{tamelift} and~\ref{plift}, any cyclic group of order not divisible by $p^2$ is a local Oort group for $p$.
Moreover, Green and Matignon proved in~\cite{GM98} that, for $m$ such that $p \nmid m$, the group $\mathbb{Z}/p^2m\mathbb{Z}$ is local Oort for $p$,
Bouw and Wewers in~\cite{BoW06} proved for odd $p$ that the dihedral group $D_p$ is local Oort for $p$, and Pagot in~\cite{pagot2002} proved that $D_2 \cong \mathbb{Z}/2\mathbb{Z} \times \mathbb{Z}/2\mathbb{Z}$ is local Oort for 2.

In 2014, Obus and Wewers in~\cite{OW14} and Pop in~\cite{Pop14} jointly resolved the 
\emph{Oort conjecture}, that is, they proved that every finite cyclic group is local Oort for $p$. Finally, Obus has proven, in~\cite{ObusD9} and~\cite{ObusA4}, respectively, that $D_9$ is local Oort for 3, and that $A_4$ is local Oort for 2.

On the other hand, in~\cite{CGH11}, Chinburg, Guralnick and Harbater introduced a particular obstruction to local lifting (denominated the \emph{Katz--Gabber--Bertin obstruction}, or more succinctly, the \emph{KGB obstruction}), and showed that this obstruction prevents all but a few classes of cyclic-by-$p$ groups from being local Oort.

\begin{thm}[Chinburg, Guralnick, Harbater] Suppose that $G$ is local Oort for $k$. Then $G$ is either cyclic or dihedral of order $2p^n$, or \emph{(}for $p=2$\emph{)} either $A_4$ or the generalized quaternion group $Q_{2^m}$ of order $2^m$ for some $m\ge 4$.
\end{thm}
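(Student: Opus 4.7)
The plan is to prove the contrapositive: for every cyclic-by-$p$ group $G$ \emph{not} of the form listed (cyclic, dihedral of order $2p^n$, and for $p=2$ the exceptional cases $A_4$ or $Q_{2^m}$ with $m\ge 4$), I exhibit a faithful continuous $G$-action on $k[[t]]$ that fails to lift to characteristic zero. A single, uniform obstruction suffices: the Katz--Gabber--Bertin (KGB) obstruction introduced in~\cite{CGH11}. Recall that by the Katz--Gabber equivalence, any faithful $G$-action on $k[[t]]$ extends canonically to a $G$-Galois cover of $\mathbb{P}^1_k$ ramified only over two points, wildly over one and tamely over the other. If the local action lifts, then so does this global cover (relative to some discrete valuation ring of characteristic zero), and the ramification data of the lift must satisfy numerical conditions compatible with Bertin's formula, Hasse--Arf, and the equivariant structure of $G$ on the tangent spaces above each ramification point.

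More precisely, the KGB obstruction packages these conditions as follows: for each pair consisting of a cyclic-by-$p$ subgroup $H\le G$ and a choice of upper-numbering ramification breaks for $H$ achievable inside $G$, one derives a numerical compatibility between the breaks and the characters by which $H$ acts on the cotangent spaces of the lifted cover. Failure of this compatibility for some $H$ and some admissible choice of breaks obstructs lifting. The proof therefore reduces to a finite case analysis: writing $G = P \rtimes C_m$ with $P$ the $p$-Sylow subgroup and $\gcd(m,p)=1$, I would, for each such $G$ outside the allowed list, specify an explicit faithful local action whose upper-numbering break data violates the KGB condition.

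I would organize the case analysis according to the structure of $P$ and of the $C_m$-action on $P$. The families to be excluded include: (i) noncyclic $p$-groups $P$ which are neither dihedral (for $p=2$) nor generalized quaternion (for $p=2$), with trivial prime-to-$p$ part; (ii) cyclic $p$-groups $P$ semidirected with $C_m$ by an action other than trivial or dihedral inversion; and (iii) for $p=2$, the remaining sporadic semidirect products not realised as $A_4$. For each case I would construct the bad action by prescribing ramification breaks on $P$ via Artin--Schreier--Witt generators (with breaks chosen freely subject to Hasse--Arf), then extending by a tame generator compatible with the semidirect structure, and finally reading off the KGB inequality to confirm that it fails.

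The principal obstacle is the case analysis itself: there are numerous group-theoretic shapes of cyclic-by-$p$ groups, and each requires a separate compatible choice of local action and break data. The subtlest subcases are the borderline $p=2$ groups whose $2$-Sylow resembles but is not dihedral or generalized quaternion (for instance the semidihedral groups, or $Q_8$), since such groups admit ramification filtrations that narrowly satisfy the KGB numerical conditions along the $p$-cyclic subgroups, and one must produce an obstruction coming from a noncyclic subgroup or from the interaction with the tame generator. Once the KGB obstruction is exhibited in every such case, Problem~\ref{llpgt} has a negative answer for the chosen local $G$-action, so $G$ cannot be local Oort, completing the proof.
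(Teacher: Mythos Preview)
The paper does not prove this theorem; it is quoted from \cite{CGH11} as a result of Chinburg, Guralnick and Harbater and serves only to situate the paper's main result. There is thus no ``paper's own proof'' to compare against. Your outline does correctly identify the KGB obstruction as the operative tool, and the contrapositive strategy you describe---exhibit, for each cyclic-by-$p$ group $G$ outside the list, a local $G$-action whose associated Katz--Gabber cover violates the KGB numerical compatibility---is indeed the shape of the argument in \cite{CGH11}.

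That said, what you have written is a plan rather than a proof. You state that you ``would organize the case analysis'' and that you ``would construct the bad action,'' and you acknowledge that ``the principal obstacle is the case analysis itself,'' but you do not carry any of it out: no group $G$ outside the list is actually treated, no explicit ramification filtration is written down, and no KGB inequality is checked. The entire content of the theorem resides in that case analysis---especially in the borderline $p=2$ subcases you flag (semidihedral groups, $Q_8$), where the obstruction is genuinely delicate and requires careful work with the character-theoretic form of the KGB condition. Until at least one representative case is executed in detail, the proposal remains a statement of intent; to be a proof it must either reproduce the case analysis of \cite{CGH11} or supply a new uniform argument, and you have done neither.
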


Furthermore, in~\cite{BrW09}, Brewis and Wewers introduced the \emph{Hurwitz tree obstruction}, and showed that this obstruction prevents the generalized quaternion groups from being local Oort. 

Combining all of the foregoing results together, we see that the groups whose status as local Oort groups is open are, save the known local Oort group $D_9$, precisely the dihedral groups of order $2p^n$ for $n > 1$. As noted above, in this paper we shall, as our main aim, remove one further group from this list --- the group $D_4$; in particular, we shall prove that $D_4$ is a local Oort group for  $p = 2$.
It should be noted that $D_4$ differs from $D_9$ in having no tame subextension and from $D_2$ in being non-abelian. To prove that $D_4$ is indeed local Oort, we shall employ the `method of equicharacteristic deformation' used both by Pop in~\cite{Pop14} and by Obus in~\cite{ObusD9} and~\cite{ObusA4}; that is, we shall make equicharacteristic deformations such that the ramification breaks of 
the local extensions on the generic fiber of the deformation are, in a suitable way, smaller than those of the original extension.  Using induction, we shall thus be able to reduce the problem to a particular class of extensions with small ramification breaks, defined by Brewis in~\cite{Brewis2008} as the \emph{supersimple} $D_4$-extensions. Since, in the same paper, Brewis proves that all supersimple $D_4$-extensions in characteristic two lift to characteristic zero, we shall accordingly have completed the desired proof.

\section{Preliminary Definitions and Background}

In this section, we shall introduce a couple of definitions and recall some necessary background information. All of the results in this section are well known; nevertheless, we provide proofs of a few results, as their proofs are somewhat difficult to find in the literature.

\subsection{Higher Ramification Definitions}
Let $k$ be a field of characteristic two, let $K = k((t))$ be the field of Laurent series over $k$, and let $L$ be a finite Galois extension of $K$ such that the residue field of $L$ is separable over $k$. Moreover, let $G$ be the Galois group of $L$ over $K$. For all $j\ge -1$, let $G_j$ and $G^j$ denote, as in~\cite{LocalFields}, the $j$th lower ramification group and the $j$th upper ramification group, respectively, of $G$. Suppose that $G$ is a group of order $2^n$. In this context, we make the following defintions.

\begin{defn}
For all $1\le i\le n$, 
the \emph{$i$th lower ramification break} $\ell_i$ of $G$ is
\[\max \{\nu \mid |G_{\nu}| \ge 2^{n+1-i}\}\]
and, similarly, the \emph{$i$th upper ramification break} $u_i$ of $G$ is
\[\max \{\nu \mid |G^{\nu}| \ge 2^{n+1-i}\}.\]
\end{defn}

\begin{defn}
The \emph{sequence of ramification groups} of $L$ over $K$ is the finite sequence $(G^{u_i})_{i=1}^n$.
\end{defn}

\begin{remark} Since $G_{\ell_i} = G^{u_i}$ for all $i$, the sequence of ramification groups of $L$ over $K$ can also be written as $(G_{\ell_i})_{i=1}^n$.
\end{remark}

For convenience, if $G$ has order two, we shall use the term $\emph{conductor}$ to denote the unique ramification break of $G$. This agrees with the usage of, \emph{e.g.},Bouw and Wewers in~\cite{BoW06}; others, such as Garuti in~\cite{Gar99} define the conductor to be the unique ramification break of $G$ plus one.

\subsection{Artin--Schreier Theory}

Let $K$ be a field of characteristic two, fix an algebraic closure $K^{\text{alg}}$ of $K$, and let $\wp: K^{\text{alg}} \to K^{\text{alg}}$ denote the Artin--Schreier additive group homomorphism, which is given by the assignment
\[ F \mapsto F^2 + F\] on $K^{\text{alg}}$. For the moment we do not insist that $K$ be a complete discrete valuation field. For any element $F$ in $K$, we denote by $[F]$ the image of $F$ in $K/\wp(K)$, and define two elements $F_1$ and $F_2$ of $K$ to be \emph{Artin--Schreier-equivalent} over $K$ if $[F_1] = [F_2]$.  By Artin--Schreier theory, $\wp$ induces a map
\[\Phi : K \to \{L|K \text{ separable} \mid \deg_K (L) = 2\} \cup \{K\}\] given by the assignment $\Phi(F) = K[\wp^{-1}(F)]$ for all $F \in K$.  

\begin{prop}\label{asprop} Let $F_1, F_2 \in K$. Then $[F_1] = [F_2]$ if and only $\Phi(F_1) = \Phi(F_2)$.
\end{prop}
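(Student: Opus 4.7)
The plan is to prove both implications by unwinding the definitions, using only that $\wp$ is an additive homomorphism and that in characteristic two the roots of $X^2 + X - F$ in $K^{\text{alg}}$ are $\alpha$ and $\alpha + 1$, where $\alpha$ is any one root. Throughout, it will be convenient to split off the degenerate case $F_i \in \wp(K)$ (equivalently $[F_i] = 0$ or $\Phi(F_i) = K$), since then $\Phi(F_i)$ is not a proper extension.

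For the forward direction, suppose $[F_1] = [F_2]$, so that $F_1 - F_2 = \wp(G)$ for some $G \in K$. If both $F_i$ lie in $\wp(K)$, then $\Phi(F_1) = K = \Phi(F_2)$ immediately. Otherwise neither does (since their difference is in $\wp(K)$), and I would pick a root $\alpha \in K^{\text{alg}}$ of $\wp(\alpha) = F_1$ and set $\beta = \alpha + G$. A short additivity computation gives $\wp(\beta) = \wp(\alpha) + \wp(G) = F_1 + (F_1 - F_2) = F_2$ in characteristic two, so $\beta$ is a root of $X^2 + X - F_2$. Since $G \in K$, we have $K[\beta] = K[\alpha]$, and hence $\Phi(F_1) = \Phi(F_2)$.

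For the reverse direction, assume $\Phi(F_1) = \Phi(F_2) =: L$. If $L = K$ then both $F_i \in \wp(K)$ and $[F_1] = 0 = [F_2]$. Otherwise $L|K$ is separable of degree two with a unique nontrivial $K$-automorphism $\sigma$. Choosing $\alpha_i \in L$ with $\wp(\alpha_i) = F_i$, the key observation is that since $\alpha_i \notin K$, $\sigma$ must swap $\alpha_i$ with $\alpha_i + 1$, the other root of $X^2 + X - F_i$. Thus $\sigma(\alpha_1 + \alpha_2) = \alpha_1 + \alpha_2$, so $G := \alpha_1 + \alpha_2 \in L^\sigma = K$. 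Then $\wp(G) = \wp(\alpha_1) + \wp(\alpha_2) = F_1 + F_2 = F_1 - F_2$ in characteristic two, giving $[F_1] = [F_2]$.

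There is no real obstacle here: the statement is one of the foundational facts of Artin--Schreier theory, and each implication reduces to a one-line manipulation once a root is chosen on each side. The only points that deserve a moment of care are remembering that $+1$ and $-1$ coincide in characteristic two (so the two roots of $X^2 + X - F$ differ by $1$) and checking the edge case in which one of the $F_i$ already lies in $\wp(K)$, which otherwise would make the ``$\Phi(F_1) = \Phi(F_2)$'' assumption vacuous on the level of Galois groups.
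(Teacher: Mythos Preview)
Your proof is correct and follows essentially the same approach as the paper's: both directions proceed by choosing Artin--Schreier roots, using additivity of $\wp$, and in the reverse direction invoking the nontrivial Galois automorphism to see that $\alpha_1+\alpha_2$ is fixed and hence lies in $K$. The only cosmetic difference is that you handle the degenerate case $F_i\in\wp(K)$ in the forward direction explicitly, whereas the paper's phrasing covers it implicitly.
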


\begin{proof} Suppose $[F_1] = [F_2]$. Then there exists $\alpha \in K$ such that $\alpha^2 + \alpha = F_1 + F_2$. Thus $\wp^{-1}(F_1) + \alpha = \wp^{-1}(F_2)$, and hence $\Phi(F_1) = \Phi(F_2)$.

Now suppose $\Phi(F_1) = \Phi(F_2) \neq K$ (If $\Phi(F_1) = K$, then $[F_1] = [F_2] = 0$.) Let $\alpha_1, \alpha_2 \in \Phi(F_1)$ such that $\wp(\alpha_1) = F_1$ and $\wp(\alpha_2) = F_2$, and let $\sigma$ be the unique non-trivial element of $\text{Gal}(\Phi(F_1)|K)$. Then $\wp(\alpha_1 + \alpha_2) = F_1 + F_2$, and
\[\sigma(\alpha_1 + \alpha_2) = \sigma(\alpha_1) + \sigma(\alpha_2) = (\alpha_1 + 1) + (\alpha_2 + 1) = \alpha_1 + \alpha_2.\]
Hence $\alpha_1 + \alpha_2 \in K$, and $[F_1] = [F_2]$.
\end{proof}

For our purposes it will suffice to consider the case in which $K$ is a complete discrete valuation field, \emph{i.e.}, in which $K = k((t))$ for some field $k$ of characteristic two. Accordingly, we suppose for the remainder of this subsection that $K$ is such a field.

\begin{defn}\label{stdformdef}
An element $\sum_{n \ge - N} a_n t^{n}$ of $K$ is in \emph{standard form over $K$ with respect to $t$} if each coefficient $a_n$ is zero unless $n$ is both negative and odd.
\end{defn}

\begin{prop}\label{stdformuniq}
Suppose that $F_1$ and $F_2$ are distinct standard form elements of $K$. Then $[F_1] \neq [F_2]$.
\end{prop}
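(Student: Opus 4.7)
The strategy is to reduce to showing that no nonzero standard form element lies in $\wp(K)$. Since we are in characteristic two, $F_1 - F_2 = F_1 + F_2$, and the standard form elements (those supported only at negative odd indices) manifestly form a $k$-subspace of $K$. Hence $F := F_1 + F_2$ is itself in standard form, and is nonzero because $F_1 \neq F_2$. As $[F_1] = [F_2]$ means precisely that $F \in \wp(K)$, it suffices to prove that a nonzero standard form element cannot be written as $\alpha^2 + \alpha$.

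The plan is to argue by contradiction: suppose $F = \alpha^2 + \alpha$ for some $\alpha = \sum_m b_m t^m \in K$, and write $F = \sum_n a_n t^n$ with $a_n$ nonzero only at negative odd indices. The crucial observation is that in characteristic two the Frobenius is additive, so $\alpha^2 = \sum_m b_m^2 t^{2m}$ is supported only at even indices. Comparing the coefficient of $t^n$ in $F = \alpha^2 + \alpha$ then yields $b_n = a_n$ whenever $n$ is odd, and $b_n = b_{n/2}^2$ whenever $n$ is even (using $a_n = 0$ for $n$ even).

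Next I would pick $-N$ to be the most negative index at which $F$ has a nonzero coefficient; by hypothesis $N$ is positive and odd, and $b_{-N} = a_{-N} \neq 0$. Iterating the even-index recursion then forces $b_{-2^j N} = b_{-N}^{2^j} \neq 0$ for every $j \geq 0$, producing infinitely many negative indices at which $\alpha$ has a nonzero coefficient. This contradicts $\alpha \in k((t))$, whose polar part must be finite.

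Once the characteristic-two fact -- that squaring contributes only at even exponents -- is used, the argument is essentially mechanical, so I do not anticipate a serious obstacle. The only subtlety worth flagging is that the nonzero ``seed'' coefficient of $\alpha$ at a negative odd index is forced, via the recursion $b_n = b_{n/2}^2$, to propagate through the entire infinite tower $-2^j N$, and it is this propagation into arbitrarily negative positions that breaks the Laurent-series condition.
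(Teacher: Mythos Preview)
Your argument is correct. Both you and the paper begin identically, reducing to the claim that a nonzero standard form element $F$ cannot lie in $\wp(K)$; the difference is in how that claim is dispatched. The paper uses a one-line valuation argument: since $F$ is nonzero and in standard form, $v_K(F)$ is negative and odd, whereas for any $\alpha\in K$ one has $v_K(\alpha^2+\alpha)=2v_K(\alpha)$ if $v_K(\alpha)<0$ (and $v_K(\alpha^2+\alpha)\ge 0$ otherwise), so $v_K(\alpha^2+\alpha)$ is never negative and odd. Your route---matching coefficients and then running the recursion $b_{-2^jN}=b_{-N}^{2^j}$ to force an unbounded polar part---reaches the same contradiction but does more work than necessary. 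The valuation approach has the advantage of never needing to examine coefficients beyond the leading one; your approach, on the other hand, makes the characteristic-two mechanism (Frobenius hits only even exponents) completely explicit, which is pedagogically nice even if it is a longer path.
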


\begin{proof}
Since $F_1$ and $F_2$ are distinct, $F_1 + F_2$ is a non-zero standard form element of $K$. Thus the valuation $v_K(F_1 + F_2) = -\deg_{t^{-1}}(F_1 + F_2)$ is odd and negative. Since, for all $\alpha \in K$, the valuation $v_K(\alpha^2 + \alpha) = 2v_K(\alpha)$ if $v_K(\alpha) < 0$, no element of $\wp^{-1}(F_1 + F_2)$ is in $K$. Thus $[F_1 + F_2] \neq 0$; \emph{i.e.}, $[F_1] \neq [F_2]$.
\end{proof}

If the residue field $k$ of $K$ is algebraically closed, then Definition~\ref{stdformdef} obviates one difficulty associated with the equivalence relation defined above --- that, in general, it may not be possible readily to select a canonical element from each Artin--Schreier equivalence class of $K$. In particular, the following propostion holds.

\begin{prop}\label{stdformprop}
Suppose $k$ is algebraically closed. Then every Artin--Schreier equivalence class of $K$ contains precisely one standard form element of $K$.
\end{prop}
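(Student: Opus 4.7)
The plan is to take uniqueness immediately from Proposition~\ref{stdformuniq}, and to prove existence by starting with an arbitrary $F\in K$ and subtracting suitable elements of $\wp(K)$ in two stages: the first killing all terms of $F$ of non-negative degree in $t$, and the second killing all terms of even, negative degree. Since both corrections are elements of $\wp(K)$, the resulting standard form element lies in the same Artin--Schreier class $[F]$.

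For the first stage, I would show that $\wp\colon k[[t]]\to k[[t]]$ is surjective. Surjectivity on the constant term is the first place where algebraic closure enters: for any $a_0\in k$ the polynomial $x^2+x-a_0$ has a root in $k$. Surjectivity onto $t\,k[[t]]$ is then a straightforward recursive determination of the coefficients of $g$ from those of $c=\wp(g)$, term by term in ascending order, with each coefficient $b_n$ computed from $c_n$ and $b_{n/2}$ (taking the latter to be zero when $n$ is odd). After this stage we may assume $F$ is supported only in strictly negative degrees.

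For the second stage, I would induct downward on the largest even positive integer $m$ with $a_{-m}\neq 0$. Since $k$ is algebraically closed, I extract a square root $\alpha$ of $a_{-m}$ in $k$ and set $g=\alpha\,t^{-m/2}$, so that $\wp(g)=a_{-m}\,t^{-m}+\alpha\,t^{-m/2}$. Subtracting $\wp(g)$ from $F$ eliminates the $t^{-m}$ term while only possibly modifying the coefficient of $t^{-m/2}$, and since $m/2<m$ the largest even negative exponent strictly decreases. Finitely many iterations leave an element supported only on negative odd exponents, i.e.~in standard form.

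The step requiring the most care is the bookkeeping of the descending induction in stage two: one must confirm that it terminates (which it does, since the even negative exponents are integers bounded below in absolute value by $2$) and, critically, that the work of stage one is not undone. The latter is automatic because each correction term $\wp(g)$ in stage two is a sum of monomials of strictly negative degree, so the non-negative part of $F$ remains zero throughout. Combined with Proposition~\ref{stdformuniq}, this yields the claimed bijection between $K/\wp(K)$ and the set of standard form elements of $K$.
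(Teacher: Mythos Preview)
Your proof is correct and follows essentially the same approach as the paper: uniqueness from Proposition~\ref{stdformuniq}, then existence by first eliminating the non-negative part (using algebraic closure of $k$ only for the constant term) and then eliminating negative even-degree terms via square roots in $k$. The paper packages the two stages slightly differently---it treats each monomial $a_n t^n$ with $n\ge 1$ individually via the closed-form identity $a_n t^n=\wp\bigl(\sum_{j\ge 0}a_n^{2^j}t^{2^jn}\bigr)$, and for a term $a_{-2^\ell m}t^{-2^\ell m}$ with $m$ odd it jumps directly to $[(a_{-2^\ell m})^{2^{-\ell}}t^{-m}]$ rather than descending one square root at a time---but your recursion and descending induction unwind to exactly these computations.
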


\begin{proof}
By Proposition~\ref{stdformuniq}, it suffices to show that every element of $K$ is Artin--Schreier-equivalent over $K$ to a standard form element of $K$.
Let $F = \sum_{n \ge -N} a_n t^n \in K$. For all $n\ge 1$, the equation
\[a_n t^n= \left(\sum_{j \ge 0} a_n^{2^j} t^{2^j n}\right)^2 + \sum_{j \ge 0} a_n^{2^j}t^{2^j n}\]
implies that $[a_n t^n] = 0$. Since $k$ is algebraically closed, $[a_0] = 0$ as well. Thus
\[[F] = \left[\sum_{-N \le n \le -1} a_n t^n \right].\]
Moreover, if $1 \le 2^\ell m \le N$ and $m$ is odd, then
\[ \left[a_{-2^\ell m}t^{-2^\ell m}\right] = \left[\left(a_{-2^\ell m}\right)^{2^{-\ell}}t^{-m}\right],\]
and so $F$ is Artin--Schreier-equivalent over $K$ to a standard form element of $K$.
\end{proof}

\begin{remark}
If $k$ is not algebraically closed, not every Artin--Schreier equivalence class need contain a standard form element. For example, if $k = \mathbb{F}_2$, then $[1] \neq [0]$ over $K$; hence the class $[1]$ contains no standard form element in $K$.
\end{remark}

The conductor of a non-trivial extension associated to an element whose degree in $t^{-1}$ is both negative and odd may be computed from this element as indicated in the following proposition. In particular, the conductor may be computed from any associated non-zero standard form element of $K$.

\begin{prop}\label{ramprop} Let $F \in K$, and let $f = \deg_{t^{-1}} F$. Suppose that $f$ is both negative and odd. Then $\Phi(F) = K[\wp^{-1}(F)]$ is a totally ramified degree two extension of $K$ whose conductor is $f$.
\end{prop}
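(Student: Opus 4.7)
The plan is to (i) confirm that $\Phi(F)$ is a genuine degree-two extension of $K$, (ii) show the extension is totally ramified, and (iii) exhibit a uniformizer of $L := \Phi(F)$ and read off the conductor from the standard invariant $v_L(\sigma(\pi_L) - \pi_L)$, where $\sigma$ is the non-trivial element of $\mathrm{Gal}(L/K)$. Write $m := -v_K(F) = |f|$; by hypothesis $m$ is odd and positive.

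The engine of the first two stages is that $\wp$ doubles negative valuations: for any $\beta \in K$ with $v_K(\beta) < 0$, we have $v_K(\wp(\beta)) = 2 v_K(\beta)$, while $\wp(\mathcal{O}_K) \subseteq \mathcal{O}_K$. So every element of $\wp(K)$ has either non-negative or even negative valuation, whereas $v_K(F) = -m$ is odd and negative; thus $F \notin \wp(K)$ and $\Phi(F)/K$ has degree two. Now let $\alpha \in K^{\mathrm{alg}}$ satisfy $\wp(\alpha) = F$ and set $L = K(\alpha)$. Applying the same doubling identity in $L$ forces $v_L(\alpha) = e\, v_K(F)/2 = -em/2$, where $e$ is the ramification index; integrality with $m$ odd forces $e$ to be even, and $e \le [L:K] = 2$ then forces $e = 2$. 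Hence $L/K$ is totally ramified and $v_L(\alpha) = -m$.

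For the conductor, since $v_L(\alpha) = -m$ is odd and $v_L(t) = 2$, multiplying $\alpha$ by an appropriate integer power of $t$ yields a uniformizer $\pi_L$ of $L$; because $\sigma$ sends $\alpha$ to $\alpha + 1$ and fixes $t$, the difference $\sigma(\pi_L) - \pi_L$ is precisely that power of $t$, and a quick calculation gives $v_L(\sigma(\pi_L) - \pi_L) = m + 1$. By the definition of the lower ramification filtration, the unique ramification break, and hence the conductor, is $(m+1) - 1 = m = f$, as claimed. The argument is essentially careful bookkeeping with valuations; the main pitfalls are keeping the sign conventions for $\deg_{t^{-1}}$ versus $v_K$ straight and correctly handling the off-by-one between $v_L(\sigma(\pi_L) - \pi_L)$ and the lower ramification break.
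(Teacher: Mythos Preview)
Your proof is correct and follows essentially the same route as the paper: both arguments show total ramification by observing that $v_L(F)=2v_L(\alpha)$ must be even while $v_K(F)$ is odd, and both then take $\pi_L=\alpha\,t^{(m+1)/2}$ as an explicit uniformizer. The only difference is in the last step: the paper computes the different via the minimal polynomial of $\pi_L$ (finding $g'(\pi_L)=t^{(m+1)/2}$) and then invokes Hilbert's different formula, whereas you compute $\sigma(\pi_L)-\pi_L=t^{(m+1)/2}$ directly and read off the break from the definition of the lower ramification filtration. Since $g'(\pi_L)=\pi_L-\sigma(\pi_L)$ for a quadratic extension, these are literally the same computation; your version is marginally more direct in that it avoids the detour through the different. (A small cosmetic point: you write $m=|f|$ at the start but $m=f$ at the end; given that the paper's own proof and all subsequent uses treat $f=-v_K(F)>0$, this is harmless.)
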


\begin{proof} Let $\alpha \in \Phi(F)$ such that $\alpha^2 + \alpha = F$. Note that then $v_{\Phi(F)}(F) < 0$ since $v_K(F) = -f < 0$. Since \[v_{\Phi(F)}(F) = \min\{2v_{\Phi(F)}(\alpha), v_{\Phi(F)}(\alpha)\},\]
it follows that $v_{\Phi(F)}(F) = 2 v_{\Phi(F)}(\alpha)$. Thus $v_{\Phi(F)}(F)$ is even. Since $v_K(F) = -f$ is odd, the ramification index of $\Phi(F)$ over $K$ is 2; thus, $\Phi(F)$ is totally ramified over $K$.

To determine the conductor of $\Phi(F)$ over $F$, let $\pi = \alpha t^{(f+1)/2}$, and observe that $v_{\Phi(F)}(\pi) = 1$; \emph{i.e.}, that $\pi$ is a uniformizer of $\Phi(F)$. Let $g(T)$ be the characteristic polynomial of $\pi$ over $K$. Since $\Phi(F)$ is totally ramified over $K$, the different $\mathcal{D}_{\Phi(F)|K}$ of $\Phi(F)$ over $K$ is equal to $g'(\pi)$ by Lemma III.3 and Corollary 2 of Lemma III.2 in~\cite{LocalFields}.
Since $\alpha^2 + \alpha = F$, the relation $\pi^2 + t^{(f+1)/2}\pi = Ft^{f+1}$ holds. Thus \[g(T) = T^2 + t^{(f+1)/2}T + Ft^{f+1},\] and $g'(T) = t^{(f+1)/2}$. Hence $\mathcal{D}_{\Phi(F)|K} = (g'(\pi)) = (t)^{(f+1)/2}$. Since $v_{\Phi(F)}(t) = 2$, the valuation $v_{\Phi(F)}(\mathcal{D}_{\Phi(F)|K}) = f + 1$. By Hilbert's different formula (Proposition IV.4 in~\cite{LocalFields}), it follows that the conductor of $\Phi(F)$ over $K$ is $f$.
\end{proof}

\subsection{Degree of the Different}
Let $K$ be the field of fractions of a discrete valuation ring $A$ with maximal ideal $\mathfrak{m}$, let $L$ be a finite \'{e}tale algebra over $K$, \emph{i.e.}, a finite product of finite separable field extensions of $K$, and let $B$ be the integral closure of $K$ in $L$.

\begin{defn}
Let $\mathfrak{D}_{B|A} = \prod_{i=1}^m \mathfrak{P}_i^{n_i}$ denote the different of $B$ over $A$. Then the \emph{degree of the different} $\delta_{B|A}$ of $B$ over $A$ is the length of $B/\mathfrak{D}_{B|A}$ as a $A/\mathfrak{m}$-module.
\end{defn}

\begin{remark}
This definition agrees with that used in~\cite{GM98},\cite{Brewis2008} and \cite{Obus17arXiv}.
Note that the sum $\sum_{i=1}^n n_i$ does not always give $\delta_{B|A}$, though this is the case if the residue field $B/\mathfrak{P}B$ is equal to $A/\mathfrak{m}A$ for every prime $\mathfrak{P}$ in $B$.
\end{remark}



\section{$D_4$-Extensions as Galois Closures of Non-Galois Extensions}
In this section we shall realize $D_4$-extensions of fields of characteristic two as the Galois closures of (two-level) towers of $\mathbb{Z}/2\mathbb{Z}$-extensions. Throughout the section, let $K$ be a field of characteristic two, let $K^{\text{alg}}$ be a fixed algebraic closure of $K$, let $M \subset K^{\text{alg}}$ be a separable extension of $K$ of degree two, and let $N \subset K^{\text{alg}}$ be a separable extension of $M$ of degree not exceeding two. Note that then there exist $F, G, H \in K$ and $q, r, s \in K^{\text{alg}}$ such that 
\[q^2 + q = F, \quad r^2 + r = Gq + H \quad\text{ and }\quad s^2 + s = G,\]
 and such that $M = K[q]$ and $N = M[r]$. Moreover, there exists $\sigma \in \text{Gal}(K^{\text{alg}}|K)$ such that $\sigma|_M$ is the unique non-trivial element of $\text{Gal}(M|K)$.


\begin{lem}\label{qslem}
The equation
\[(qs)^2 + qs = Gq + Fs^2 = Gq + Fs + FG\]
holds.
\end{lem}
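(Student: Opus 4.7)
The plan is to verify both equalities by direct substitution, using only the defining Artin--Schreier equations $q^2 + q = F$ and $s^2 + s = G$ together with the fact that addition and subtraction coincide in characteristic two.

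For the first equality, I would expand $(qs)^2 + qs = q^2 s^2 + qs$ and substitute $q^2 = q + F$, obtaining $(q+F)s^2 + qs = q(s^2 + s) + Fs^2$. Replacing $s^2 + s$ by $G$ then gives $Gq + Fs^2$, as desired.

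For the second equality, the difference $Fs^2 - (Fs + FG) = F(s^2 - s - G) = F(s^2 + s + G)$ vanishes in characteristic two by the definition of $s$. Equivalently, writing $s^2 = s + G$ and factoring out $F$ yields $Fs^2 = Fs + FG$, and adding $Gq$ to both sides produces the claimed identity.

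The main obstacle is essentially nonexistent: this is a routine manipulation in a characteristic two ring, and no structural insight about $M$, $N$ or $\sigma$ is required at this stage. I would expect the lemma to be stated here precisely because the identity it records, presenting $(qs)^2 + qs$ as an element of $K$-plus-$Kq$-plus-$Ks$, will be the algebraic fact that lets the author recognize (or construct) $D_4$-extensions in the sections to follow.
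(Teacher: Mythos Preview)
Your argument is correct and is essentially the same direct computation as the paper's: both proofs expand $(qs)^2+qs$ and regroup using $q^2+q=F$ and $s^2+s=G$, differing only in the order of substitution (the paper inserts $qs^2+qs^2=0$ to factor, while you substitute $q^2=q+F$ first). The second equality is handled identically by writing $s^2=s+G$.
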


\begin{proof}
Note that
\[(qs)^2 + qs = q^2s^2 + qs^2 + qs^2 + qs = q(s^2 + s) + (q^2 + q)s^2
= Gq + Fs^2 = Gq + Fs + FG.\]
\end{proof}

\begin{lem}\label{kerlem1}
$[G] = 0$ over $M$ if and only if either $[G] = 0$ over $K$ or $[G] = [F]$ over $K$.
\end{lem}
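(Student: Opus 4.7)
The plan is to use the fact that $M|K$ is a degree-two separable extension, so $\{1, q\}$ is a $K$-basis of $M$. Any element $\beta \in M$ can be written uniquely as $\beta = a + bq$ with $a, b \in K$. The key step is to compute $\wp(\beta) = \beta^2 + \beta$ directly. Using $q^2 = q + F$, one gets
\[
\beta^2 + \beta = a^2 + b^2 q^2 + a + bq = (a^2 + a + b^2 F) + (b^2 + b)\,q.
\]
For $\wp(\beta)$ to equal $G \in K$, the coefficient of $q$ in the basis $\{1, q\}$ must vanish, that is $b^2 + b = 0$, forcing $b \in \{0,1\}$.

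From here the two implications fall out cleanly. If $b = 0$, then $a^2 + a = G$ with $a \in K$, so $[G] = 0$ over $K$. If $b = 1$, then $a^2 + a = G + F$ with $a \in K$, so $[G] = [F]$ over $K$. This gives the forward direction. For the converse, if $[G] = 0$ over $K$, then already $[G] = 0$ over $M \supset K$; and if $[G] = [F]$ over $K$, pick $a \in K$ with $a^2 + a = G + F$ and verify by direct computation that $\beta := a + q \in M$ satisfies $\beta^2 + \beta = a^2 + a + q^2 + q = (G + F) + F = G$, so $[G] = 0$ over $M$.

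There is no real obstacle here: the argument is essentially an explicit coordinate calculation in the two-dimensional $K$-vector space $M$, with the only subtlety being the (trivial) observation that $b^2 + b = 0$ in a field of characteristic two forces $b \in \mathbb{F}_2$. The statement that $M|K$ is genuinely degree two, so that $q \notin K$ and $\{1, q\}$ is linearly independent over $K$, is already built into the standing hypothesis of the section.
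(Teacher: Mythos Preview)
Your proof is correct and follows essentially the same approach as the paper's own proof: both write a general element of $M$ in the $K$-basis $\{1,q\}$, expand $\wp$ using $q^2 = q + F$, and read off from the $q$-coefficient that the coefficient of $q$ must lie in $\mathbb{F}_2$. The only difference is notational (the paper writes the generic element as $\alpha q + \beta$ rather than $a + bq$).
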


\begin{proof}

Suppose $[G] = 0$ over $M$.  Then there exist $\alpha, \beta \in K$ such that
\begin{align*}G &= (\alpha q + \beta)^2 + \alpha q + \beta
= \alpha^2 q^2 + \beta^2 + \alpha q + \beta \\
&= \alpha^2 (q + F) + \alpha q + \beta^2 + \beta
= (\alpha^2 + \alpha)q + \alpha^2 F + \beta^2 + \beta.
\end{align*}
Since $G \in K$ and $M = K[q]$, it follows that $\alpha^2 + \alpha = 0$. Thus either $\alpha = 0$, in which case $[G] = 0$ over $K$, or $\alpha = 1$, in which case $[G] = [F]$ over $K$.




Now suppose either that $[G] = 0$ over $K$, or that $[G] = [F]$ over $K$.
If $[G] = 0$ over $K$, then $[G] = 0$ over $M$.
If $[G] = [F]$ over $K$, then $[G] = [F] = 0$ over $M$ since $q^2 + q = F$ and $q\in M$.
\end{proof}

\begin{lem}\label{kerlem2}
The following three conditions are equivalent:

\begin{enumerate}[label=\emph{(\arabic*)}]
\item $[Gq + H] = 0$ over $M$.
\item $[G] = 0$ over $K$ and $[H] = [Fs^2]$ over $M$.
\item $[G] = 0$ over $M$ and $[H] = [Fs^2]$ over $M$.
\end{enumerate}
\end{lem}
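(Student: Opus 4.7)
My plan is to prove the cycle $(1) \Rightarrow (2) \Rightarrow (3) \Rightarrow (1)$. The implication $(2) \Rightarrow (3)$ is essentially free from Lemma~\ref{kerlem1}: if $[G] = 0$ over $K$, then obviously $[G] = 0$ over $M$, and the second clause is unchanged.

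For $(3) \Rightarrow (1)$, the key is Lemma~\ref{qslem}. Since $[G] = 0$ over $M$, one of the two roots of $X^2 + X = G$ lies in $M$; replacing $s$ by $s+1$ if necessary (which does not change $[Fs^2]$ modulo $\wp(M)$ because $F(s+1)^2 = Fs^2 + F$ and $[F] = 0$ over $M$ thanks to $q^2 + q = F$), I may assume $s \in M$. Then $qs \in M$, and Lemma~\ref{qslem} gives $[Gq] = [Fs^2]$ over $M$, whence $[Gq + H] = [Fs^2] + [Fs^2] = 0$ over $M$.

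The main content is $(1) \Rightarrow (2)$. Assuming $Gq + H = \alpha^2 + \alpha$ for some $\alpha \in M$, I write $\alpha = \beta + \gamma q$ with $\beta, \gamma \in K$ and expand using $q^2 = q + F$:
\[
\alpha^2 + \alpha \;=\; \beta^2 + \beta + \gamma^2 F + (\gamma^2 + \gamma)q.
\]
Since $\{1, q\}$ is a $K$-basis of $M$, comparing coefficients yields $\gamma^2 + \gamma = G$ and $\beta^2 + \beta = H + \gamma^2 F$. The first equation gives $\gamma \in K$ with $\wp(\gamma) = G$, hence $[G] = 0$ over $K$. The second equation gives $[H] = [\gamma^2 F]$ over $K$, and since $\gamma$ equals $s$ or $s+1$, the observation above that $[F(s+1)^2] = [Fs^2]$ over $M$ yields $[H] = [Fs^2]$ over $M$.

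No serious obstacle is anticipated; the computation is a direct expansion in the $K$-basis $\{1,q\}$ of $M$. The only mild subtlety is tracking how the ambiguity $s \leftrightarrow s+1$ in the choice of square-root of $G$ interacts with the condition $[H] = [Fs^2]$, which is handled uniformly by passing to $\wp$-classes over $M$ (rather than over $K$), where the difference $F$ vanishes.
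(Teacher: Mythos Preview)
Your proof is correct and essentially identical to the paper's: the same cycle $(1)\Rightarrow(2)\Rightarrow(3)\Rightarrow(1)$, the same coordinate expansion $\alpha=\beta+\gamma q$ for $(1)\Rightarrow(2)$, and the same appeal to Lemma~\ref{qslem} for $(3)\Rightarrow(1)$. One minor simplification: in $(3)\Rightarrow(1)$ the replacement $s\mapsto s+1$ is never needed, since $[G]=0$ over $M$ forces both roots $s$ and $s+1$ to lie in $M$ automatically.
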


\begin{proof}

((1)$\implies$(2)) Suppose $[Gq + H] = 0$ over $M$.  Then there exist $\alpha, \beta \in K$ such that
\[Gq + H = (\alpha q + \beta)^2 + \alpha q + \beta = (\alpha^2 + \alpha)q + \alpha^2 F + \beta^2 + \beta,\]
as above. Hence, since $G, H \in K$, it follows that $G = \alpha^2 + \alpha$ and $H = \alpha^2F + \beta^2 + \beta$. Therefore, $[G] = 0$ over $K$, and either $\alpha = s$ or $\alpha = s+1$.

First suppose $\alpha = s$. Then $H = Fs^2 + \beta^2 + \beta$, and hence $[H] = [Fs^2]$ over $K$. Thus $[H] = [Fs^2]$ over $M$ as well.

Now suppose $\alpha = s + 1$. Then 
\[H = (s+1)^2F + \beta^2 + \beta = Fs^2 + F + \beta^2 + \beta,\] and hence $[H] = [Fs^2 + F]$ over $K$. Thus, over $M$, $[H] = [Fs^2 + F] = [Fs^2] + [F] = [Fs^2]$.

\noindent Therefore, in both cases, $[H] = [Fs^2]$ over $M$. Thus $[H] = [Fs^2]$ over $M$.

((2)$\implies$(3)) Since $K\subseteq M$, this implication holds \emph{a fortiori}.

((3)$\implies$(1)) Finally, suppose that $[G] = 0$ over $M$ and that $[H] = [Fs^2]$ over $M$. By Lemma~\ref{qslem}, $(qs)^2 + qs = Gq + Fs^2$. Since $[G] = 0$ over $M$, it follows that $s \in M$ and that $qs \in M = K[q]$. Thus, over $M$, 
\[0 = [(qs)^2 + qs] = [Gq + Fs^2] = [Gq] + [Fs^2] = [Gq] + [H] = [Gq + H].\qedhere\]
\end{proof}


\begin{lem}\label{gallem}
Suppose that $N$ is a degree four extension of $K$.
The following four conditions are equivalent:

\begin{enumerate}[label=\emph{(\arabic*)}]
\item $N$ is a Galois extension of $K$.
\item $\sigma(N) = N$.
\item $[\sigma(Gq + H)] = [Gq + H]$ over $M$.
\item $[G] = 0$ over $M$.
\end{enumerate}
\end{lem}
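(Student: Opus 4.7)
The plan is to prove the four-way equivalence via the cyclic chain $(1) \Leftrightarrow (2) \Leftrightarrow (3) \Leftrightarrow (4)$, with each link requiring only one short observation.

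For $(1) \Leftrightarrow (2)$, I would first note that $M|K$, being a separable extension of degree two, is already Galois, so every $\tau \in \text{Gal}(K^{\text{alg}}|K)$ satisfies $\tau(M) = M$ and restricts on $M$ either to the identity or to $\sigma|_M$. If $\tau|_M$ is trivial, then $\tau(r)$ satisfies the same Artin--Schreier equation $T^2+T = Gq+H$ as $r$, so $\tau(r) \in \{r, r+1\} \subseteq N$ and hence $\tau(N) = N$ automatically. Consequently $N|K$ is normal iff the $\tau$'s in the nontrivial coset also fix $N$ setwise, and since these differ from $\sigma$ by elements of the first type, this is equivalent to $\sigma(N) = N$.

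For $(2) \Leftrightarrow (3)$: because $\sigma(M) = M$, we have $\sigma(N) = M[\sigma(r)]$, and $\sigma(r)$ satisfies $\sigma(r)^2 + \sigma(r) = \sigma(Gq+H)$. Writing $\Phi_M$ for the map $\Phi$ applied over the base field $M$, this says $\sigma(N) = \Phi_M(\sigma(Gq+H))$ and $N = \Phi_M(Gq+H)$. Since $[N:M] = 2$, neither class vanishes in $M/\wp(M)$, so Proposition~\ref{asprop} gives $\sigma(N) = N$ iff $[\sigma(Gq+H)] = [Gq+H]$ over $M$. For $(3) \Leftrightarrow (4)$, the computation is immediate: since $G, H \in K$ and $\sigma(q) = q+1$, we have $\sigma(Gq+H) = Gq + H + G$, whence $[\sigma(Gq+H)] + [Gq+H] = [G]$ in $M/\wp(M)$ (characteristic two), and $(3)$ holds iff $[G] = 0$ over $M$.

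The main care point, not really an obstacle, lies in the $(1) \Leftrightarrow (2)$ step, where one must justify reducing normality — a statement about every $K$-embedding of $N$ into $K^{\text{alg}}$ — to the single condition $\sigma(N) = N$; the observation above that any $\tau$ fixing $M$ pointwise automatically preserves $N$ (because $N$ already contains both Artin--Schreier roots of $Gq+H$ over $M$) is exactly what makes this reduction work. The remaining links are essentially formal applications of Proposition~\ref{asprop} and the explicit action of $\sigma$ on $q$.
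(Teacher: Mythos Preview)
Your proof is correct and follows essentially the same route as the paper: the same cyclic chain, the same computation $\sigma(Gq+H)=Gq+H+G$ for $(3)\Leftrightarrow(4)$, and the same use of Proposition~\ref{asprop} (via $\sigma(N)=M[\sigma(r)]$) for $(2)\Leftrightarrow(3)$. The only cosmetic difference is at $(1)\Leftrightarrow(2)$: the paper argues $(2)\Rightarrow(1)$ by exhibiting two distinct non-trivial $K$-automorphisms of $N$ (namely $\sigma|_N$ and the generator of $\text{Gal}(N|M)$), whereas you reduce normality to the single condition $\sigma(N)=N$ by observing that any $\tau$ fixing $M$ pointwise already stabilizes $N$---both arguments are equally short and standard.
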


\begin{figure}
\[\xymatrix{ & \sigma(N) \ar@{-}[dr]_{\sigma(Gq+H)} & & N \ar@{-}[dl]^{Gq+H} \\
& & M \ar@{-}[d]^{F} & & \\
& & K & & }\]
\caption{}
\label{galfig}
\end{figure}

\begin{remark} The situation described in Lemma~\ref{gallem} may be visualized as in Figure~\ref{galfig}.
\end{remark}

\begin{proof}
Recall that $\sigma|_M$ is the unique non-trivial element of $M$.


Suppose that $\sigma(N) = N$. Since $\sigma|_M$ is non-trivial, $\sigma|_N$ is non-trivial. 
Let $\tau$ be the unique non-trivial element of $\text{Gal}(N|M)$. 
Then $\tau(N) = N$, and $\tau|_M$ is trivial. Thus $\sigma|_M \neq \tau|_M$; as such, $\sigma|_N$ and $\tau$ are distinct non-trivial $K$-automorphisms of $N$. Hence $N$ is Galois over $K$,
and conditions (1) and (2) are equivalent.

Moreover, since $\sigma|_M$ is non-trivial, $\sigma(q) = q+1$. Thus $\sigma(Gq + H) = G(q+1) + H = Gq + G + H$. Therefore,
$[\sigma(Gq + H)] = [Gq + H]$ over $M$ if and only if $[Gq + G + H] = [Gq + H]$ over $M$, which holds if and only if $[G] = 0$ over $M$. Thus (3) and (4) are equivalent.

Note now that $\left(\sigma(r)\right)^2 + \sigma(r) = \sigma(r^2 + r) = \sigma(Gq + H)$. 
Thus $[\sigma(Gq + H)] = [Gq + H]$ over $M$ if and only if $M[\sigma(r)] = M[r] = N$, which holds if and only if $\sigma(N) = N$. Hence (2) and (3) are equivalent.

Therefore, conditions (1) through (4) are equivalent, as claimed.
\end{proof}

\begin{prop}\label{classprop}
The following statements, exactly one of which applies, all hold: 
\begin{enumerate}[label=\emph{(\arabic*)}]
\item If $[G] = 0$ over $K$ and $[H] = [Fs^2]$ over $M$, then $N = M$.
\item If $[G] = 0$ over $K$ and $[H] \neq [Fs^2]$ over $M$, then $N$ is a Galois extension of $K$, and $\emph{Gal}(N|K) \cong \mathbb{Z}/2\mathbb{Z} \times \mathbb{Z}/2\mathbb{Z}$.
\item If $[G] = [F]$ over $K$, then $N$ is a Galois extension of $K$, and $\emph{Gal}(N|K) \cong \mathbb{Z}/4\mathbb{Z}$.
\item If $[G] \neq 0$ over $M$, then $N$ is not a Galois extension of $K$, and $\emph{Gal}(\tilde{N}|K) \cong D_4$, where $\tilde{N}$ denotes the Galois closure of $N$ over $K$.
\end{enumerate}

\end{prop}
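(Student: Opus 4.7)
My plan is to handle each of the four cases separately, drawing on Lemmas~\ref{qslem}, \ref{kerlem1}, \ref{kerlem2}, and~\ref{gallem}. First I would check that the four cases are mutually exclusive and exhaustive. By Lemma~\ref{kerlem1}, $[G] = 0$ over $M$ is equivalent to $[G] \in \{0, [F]\}$ over $K$, and these two subcases cannot overlap because $[F] \neq 0$ over $K$ (since $[M:K] = 2$). Hence the three options $[G] = 0$ over $K$, $[G] = [F] \neq 0$ over $K$, and $[G] \neq 0$ over $M$ partition all possibilities, and the hypothesis on $H$ further splits the first option into cases (1) and (2).

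Case (1) follows immediately from Lemma~\ref{kerlem2}: $[Gq + H] = 0$ over $M$, so $r \in M$ and $N = M$. In cases (2) and (3), Lemma~\ref{kerlem2} conversely gives $[Gq + H] \neq 0$ over $M$ (in case (2) by direct hypothesis; in case (3) since $[G] \neq 0$ over $K$), so $[N : K] = 4$, and Lemma~\ref{gallem} then yields that $N|K$ is Galois. To distinguish $\mathbb{Z}/4\mathbb{Z}$ from $\mathbb{Z}/2\mathbb{Z} \times \mathbb{Z}/2\mathbb{Z}$, I would analyze the order of $\sigma|_N$. In case (3), $s \in M \setminus K$ forces $\sigma(s) = s+1$; since $\sigma(r)^2 + \sigma(r) = Gq + G + H$, one finds $\sigma(r) \in \{r+s, r+s+1\}$, and a short calculation yields $\sigma^2(r) = r+1$ in either case, so $\sigma|_N$ has order four and $\text{Gal}(N|K) \cong \mathbb{Z}/4\mathbb{Z}$. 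In case (2), $s \in K$, and Lemma~\ref{qslem} gives $(r + qs)^2 + (r + qs) = H + Fs^2 \in K$; since $[H + Fs^2] \neq 0$ over $K$ (inherited from $[H] \neq [Fs^2]$ over $M$) and $r + qs \notin M$ (else $r \in M$), this produces a quadratic subfield $K[r + qs]$ of $N$ distinct from $M$, forcing $\text{Gal}(N|K) \cong \mathbb{Z}/2\mathbb{Z} \times \mathbb{Z}/2\mathbb{Z}$.

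Case (4) I expect to be the main obstacle. Lemma~\ref{gallem} directly yields that $N$ is not Galois over $K$; let $\tilde N$ denote its Galois closure. The hypothesis $[G] \neq 0$ over $M$ means $s \notin M$, so $M[s]$ is the biquadratic Galois extension of $K$ generated by $M$ and $K[s]$; since $N$ is not Galois, $N \neq M[s]$, hence $s \notin N$ and $[N[s]:K] = 8$. I would next show $\tilde N = N[s]$: every $K$-conjugate of $r$ satisfies $X^2 + X = Gq' + H$ for some $q' \in \{q, q+1\}$, so differs from $r$ by a root of $X^2 + X \in \{0, G\}$ and therefore lies in $N[s]$; hence every $K$-embedding of $N[s]$ maps into $N[s]$, making $N[s]$ Galois over $K$, and counting degrees (using $\tilde N \supsetneq N$) forces $\tilde N = N[s]$. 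Finally, the stabilizer of $N$ in $\text{Gal}(\tilde N|K)$ is a non-normal subgroup of order two, so $\text{Gal}(\tilde N|K)$ is nonabelian of order eight; among the two such groups, $Q_8$ has its unique order-two subgroup (the center) normal, whereas $D_4$ admits non-normal order-two subgroups, forcing $\text{Gal}(\tilde N|K) \cong D_4$.
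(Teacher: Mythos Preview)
Your proof is correct. Cases (1)--(3) and the exhaustiveness/exclusivity check are essentially the same as the paper's argument (the paper handles case~(2) by additionally observing $[H+Fs^2]\neq[F]$ over $K$ via Lemma~\ref{kerlem1}, but your observation that $r+qs\notin M$ achieves the same end).

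Case~(4), however, genuinely diverges. The paper never explicitly computes $\tilde N$ or its degree; instead it observes that $\text{Gal}(\tilde N|K)$ embeds in $S_4$ (as the Galois closure of a degree-four extension), contains the normal index-two subgroup $\text{Gal}(\tilde N|M)$, and inside that a non-normal index-four subgroup $\text{Gal}(\tilde N|N)$ --- and then appeals to the fact that $D_4$ is the unique subgroup of $S_4$ with this configuration. Your approach instead constructs $\tilde N = N[s] = K[q,r,s]$ directly, verifies $[\tilde N:K]=8$, and then distinguishes $D_4$ from $Q_8$ (and the abelian groups) by the presence of a non-normal order-two subgroup. Your route is more constructive: it yields the explicit description $\tilde N = K[q,r,s]$ as a byproduct, which the paper establishes separately in Proposition~\ref{d4struc}. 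The paper's route is shorter and avoids tracking the conjugates of $r$, at the cost of invoking a small group-theoretic classification inside $S_4$.
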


\begin{proof} 

To prove (1), suppose that $[G] = 0$ over $K$ and that $[H] = [Fs^2]$ over $M$. Then, by Lemma~\ref{kerlem2}, $[Gq + H] = 0$ over $M$. Thus $r \in M$, and hence $N = M[r] = M$.


To prove (2), suppose that $[G] = 0$ over $K$ and that $[H] \neq [Fs^2]$ over $M$. By Lemma~\ref{kerlem2}, $[Gq + H] \neq 0$ over $M$; as such, $N \neq M$. Thus $N$ is a degree four extension of $K$. Since $[G] = 0$ over $M$, Lemma~\ref{gallem} implies that $N$ is a Galois extension of $K$. Moreover, since $[G] = 0$ over $K$, it follows that $s \in K$. Thus
\[(r + qs)^2 + (r + qs) = r^2 + r + (qs)^2 + qs = Gq + H + Gq + Fs^2 = H + Fs^2 \in K,\]
where the second equality follows by Lemma~\ref{qslem}.
Since $[H] \neq [Fs^2]$ over $M$, $[H + Fs^2] \neq 0$ over $M$. By Lemma~\ref{kerlem1}, it follows that $[H + Fs^2] \neq 0$ over $K$ and that $[H + Fs^2] \neq [F]$ over $K$. Hence $K[r+qs]$ is a degree two subfield of $N$ that is not equal to $M = K[q]$. Thus $\text{Gal}(N|K) \cong \mathbb{Z}/2\mathbb{Z} \times \mathbb{Z}/2\mathbb{Z}$.

To prove (3), suppose that $[G] = [F]$ over $K$. Then $[G] \neq 0$ over $K$, and so $[Gq + H] \neq 0$ over $M$ by Lemma~\ref{kerlem2}. Thus $N$ is a degree four extension of $K$. Since $[G] = [F] = 0$ over $M$, Lemma~\ref{gallem} implies that $N$ is a Galois extension of $K$. Moreover, since $[G] = [F] \neq 0$ over $K$, it follows that $s\in M\backslash K$, and hence that $\sigma(s) = s + 1$. Furthermore, since
\[(\sigma(r))^2 + (\sigma(r)) = \sigma(Gq + H) = Gq + H + G = (r+s)^2 + (r+s),\]
either $\sigma(r) = r + s$, or $\sigma(r) = r + s + 1$. In either case, one easily verifies that $\sigma^2(r) = r + 1$.
Therefore $\sigma^2|_N$ is not trivial, and $\text{Gal}(N|K) \cong \mathbb{Z}/4\mathbb{Z}$.

To prove (4), suppose that $[G] \neq 0$ over $M$. Then $[G] \neq 0$ over $K$, and hence $N \neq M$ by Lemma~\ref{kerlem2}. Thus $N$ is a degree four extension of $K$; hence, by Lemma~\ref{gallem}, $N$ is not a Galois extension of $K$. Moreover, $\text{Gal}(\tilde{N}|K)$ is isomorphic to a subgroup of $S_4$ and contains an index two (normal) subgroup, \emph{viz}.~$\text{Gal}(\tilde{N}|M)$, which itself contains a subgroup of index four in $\text{Gal}(\tilde{N}|K)$ that is not normal in $\text{Gal}(\tilde{N}|K)$. The only group satisfying all these conditions is $D_4$, so $\text{Gal}(\tilde{N}|K) \cong D_4$.
\end{proof}

\begin{prop}\label{nongalprop}
Let $F_0, G_0, H_0 \in K$, and let $q_0, r_0, s_0 \in K^{\emph{alg}}$ such that $q_0^2 + q_0 = F_0$, $r_0^2 + r_0 = G_0 q_0 + H_0$, and $s_0^2 + s_0 = G_0$.
Also, let $M_0 = K_0[q_0]$ and $N_0 = M_0[r_0]$ . Then

\begin{enumerate}[label=\emph{(\arabic*)}]
\item $M_0 = M$ and $N_0 = N$ if and only if $[F] = [F_0]$ over $K$, $[G] = [G_0]$ over $K$, and $[H] = [H_0 + G_0(q+q_0) + F(s+s_0)^2]$ over $M$.
\item $M_0 = M$ and $N_0 = \sigma(N)$ if and only if $[F] = [F_0]$ over $K$, $[G] = [G_0]$ over $K$, and $[H] = [H_0 + G_0(q+q_0) + F(s+s_0)^2 + G]$ over $M$.
\end{enumerate}
\end{prop}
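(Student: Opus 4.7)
The plan is to translate the field equalities into equalities of Artin--Schreier classes via Proposition~\ref{asprop} and then manipulate those classes using the identity in Lemma~\ref{qslem}. The first observation is that $M_0 = M$ is equivalent to $[F]=[F_0]$ over $K$ by Proposition~\ref{asprop}; in that case $(q+q_0)^2 + (q+q_0) = F + F_0$, so $q+q_0$ lies in $K$ and one may set $c := q+q_0 \in K$, giving $q_0 = q+c$. This is exactly the quantity appearing in the claimed expression for $[H]$.

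For the $(\Longleftarrow)$ direction of part (1), I would compute the class of $(Gq+H) + (G_0 q_0 + H_0)$ over $M$ by substituting $q_0 = q+c$ and the hypothesized expression for $[H]$, reducing the problem to the single assertion that $[(G+G_0)q + F(s+s_0)^2] = 0$ over $M$. Since $[G]=[G_0]$ over $K$, one may pick $\alpha \in K$ with $\alpha^2+\alpha = G+G_0$; Lemma~\ref{qslem}, applied with $s$ replaced by $\alpha$, then gives $[(G+G_0)q + F\alpha^2] = 0$ over $M$. The relation $s+s_0 \in \{\alpha, \alpha+1\}$ makes $F\alpha^2$ and $F(s+s_0)^2$ differ by $0$ or $F$, and $[F] = 0$ over $M$ since $F = \wp(q)$ with $q \in M$; invoking Proposition~\ref{asprop} once more gives $N_0 = N$.

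For the $(\Longrightarrow)$ direction, I would observe that, since $N_0 = N$ and $\wp(r), \wp(r_0) \in M$, expanding $r_0 = ar + b$ with $a,b\in M$ (or $r_0, r \in M$ if $N=M$) and matching the coefficient of $r$ forces $a\in\{0,1\}$, so $r_0 = r+b$ for some $b\in M$. Writing $b = b_1 q + b_0$ with $b_0, b_1 \in K$, expanding $\wp(r_0) - \wp(r) = \wp(b)$ via $q^2 = q+F$ and using $q_0 = q+c$, and then matching coefficients in the $K$-basis $\{1, q\}$ of $M$ gives the two equations
\[
G + G_0 = b_1^2 + b_1, \qquad 0 = H + H_0 + G_0 c + b_1^2 F + b_0^2 + b_0.
\]
The first gives $[G]=[G_0]$ over $K$ and also forces $s+s_0 \in \{b_1, b_1+1\} \subset K$. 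The second, upon passing to $M/\wp(M)$, reduces to $[H] = [H_0 + G_0(q+q_0) + F(s+s_0)^2]$ over $M$, since $b_0^2 + b_0 \in \wp(K)$ and $(b_1^2 - (s+s_0)^2)F = F(b_1 + s + s_0)^2 \in \{0, F\} \subset \wp(M)$.

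Part (2) should follow immediately from part (1) applied with $H$ replaced by $H+G$, since $\sigma(N) = M[\sigma(r)]$ and $\sigma(r)^2 + \sigma(r) = \sigma(Gq+H) = Gq + (H+G)$. The main obstacle I anticipate is keeping careful track of the $\mathbb{F}_2$-indeterminacies arising from the two roots of each Artin--Schreier equation (for $q_0$, $s$, $s_0$, and $\alpha$), and verifying at each step that the resulting discrepancies are absorbed by the fact that $[F] = 0$ in $M/\wp(M)$.
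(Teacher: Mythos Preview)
Your argument is correct, but it takes a more hands-on route than the paper. The paper observes that $N_0 = N$ is equivalent (given $M_0=M$) to the single condition
\[
[(G+G_0)q + G_0(q+q_0) + H + H_0] = 0 \quad\text{over } M,
\]
and then applies Lemma~\ref{kerlem2} \emph{once}, with $G$ replaced by $G+G_0$ and $H$ replaced by $G_0(q+q_0)+H+H_0$ (noting that $(s+s_0)^2+(s+s_0)=G+G_0$), to obtain both directions simultaneously. Your proof instead unpacks this: for $(\Leftarrow)$ you redo the $(3)\Rightarrow(1)$ step of Lemma~\ref{kerlem2} via Lemma~\ref{qslem}, and for $(\Rightarrow)$ you redo its $(1)\Rightarrow(2)$ step via explicit coefficient matching in the $K$-basis $\{1,q\}$ of $M$. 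So the difference is purely one of packaging: the paper's route is shorter because Lemma~\ref{kerlem2} has already been established, while yours is self-contained modulo Lemma~\ref{qslem}.

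Two small points worth tightening. First, when you write ``$a\in\{0,1\}$, so $r_0 = r+b$'', you should note that $a=0$ would force $r_0\in M$ and hence $N_0=M$, which in the case $N\ne M$ contradicts $N_0=N$; this is why $a=1$. Second, your handling of part~(2) matches the paper exactly: both reduce to part~(1) with $H$ replaced by $H+G$, since $\sigma(r)$ is an Artin--Schreier root of $Gq+H+G$.
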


\begin{proof}
Note that $M_0 = M$ if and only if $[F] = [F_0]$ over $K$. This will be used without citation henceforth.

To prove (1), suppose that $M_0 = M$. Then $N_0 = N$ if and only if $[Gq + H] = [G_0 q_0 + H_0]$ over $M$, which holds if and only if $[(G+G_0)q + G_0(q + q_0) + H + H_0] = 0$ over $M$. Since $[F] = [F_0]$ over $K$, the element $q+q_0$ is in $K$.
Thus, by Lemma~\ref{kerlem2}, $N_0 = N$ if and only if both $[G + G_0] = 0$ over $K$, and $[G_0(q + q_0) + H + H_0] = [F(s+s_0)^2]$ over $M$; \emph{i.e.}, if and only if both $[G] = [G_0]$ over $K$, and $[H] = [H_0 + G_0(q+q_0) + F(s+s_0)^2]$ over $M$. Statement (1) now follows.

To prove (2), suppose that $M_0 = M$, and note both that $\sigma(N) = M[\sigma(r)]$, and that $\left(\sigma(r)\right)^2 + \sigma(r) = \sigma(Gq + H) = Gq + H + G$. 
Then $N_0 = \sigma(N)$ if and only if $[Gq + H + G] = [G_0 q_0 + H_0]$ over $M$, which holds if and only if $[(G+G_0)q + G_0(q+q_0) + H + G + H_0] = 0$ over $M$. As above, by Lemma~\ref{kerlem2}, this holds if and only if $[G] = [G_0]$ over $K$ and $[H] = [H_0 + G_0(q+q_0) + F(s+s_0)^2 + G]$ over $M$. 
Statement (2) now follows.
\end{proof}

\section{$D_4$-Extensions of Fields of Characteristic Two}

Let $K$ be a field of characteristic two, let $K^{\text{alg}}$ be a fixed algebraic closure of $K$, and let $L\subseteq K^{\text{alg}}$ be a Galois extension of $K$ such that $\text{Gal}(L|K) \cong D_4$.

\begin{prop}\label{d4exist}
There exist $F, G, H \in K$, and $q,r \in K^{\emph{alg}}$ such that $q^2 + q = F$, $r^2 + r = Gq + H$, and $L$ is the Galois closure over $K$ of $K[q,r]$.
\end{prop}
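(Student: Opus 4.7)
The plan is to realize the tower $K \subset M \subset N$ from Section~3 as fixed fields of suitably chosen subgroups of $\text{Gal}(L|K) \cong D_4$, and then to invoke Artin--Schreier theory in characteristic two.

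First I fix a non-normal subgroup $\mathcal{H}$ of $\text{Gal}(L|K)$ of order $2$; such a subgroup exists because $D_4$ is non-abelian (any non-central reflection generates one). Setting $N := L^{\mathcal{H}}$ gives a degree-$4$ extension of $K$. Since $\mathcal{H}$ has order $2$ and is not normal, its normal core in $\text{Gal}(L|K)$ is trivial, so $N$ is not Galois over $K$ and its Galois closure is all of $L$. Next, I take $\mathcal{N}$ to be the normalizer of $\mathcal{H}$ in $D_4$, equivalently $\langle \mathcal{H}, Z(D_4) \rangle$; this is an index-$2$, hence normal, subgroup of $D_4$ that properly contains $\mathcal{H}$. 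Therefore $M := L^{\mathcal{N}}$ is a Galois quadratic extension of $K$ contained in $N$.

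Now I apply Artin--Schreier theory. Because $M|K$ is a separable quadratic extension in characteristic two, there exist $q \in M$ and $F \in K$ with $q^2 + q = F$ and $M = K[q]$; likewise, because $N|M$ is separable quadratic, there exist $r \in N$ and $\alpha \in M$ with $r^2 + r = \alpha$ and $N = M[r]$. Expanding $\alpha$ in the $K$-basis $\{1, q\}$ of $M$ yields $\alpha = Gq + H$ for some $G, H \in K$. Finally, $K[q, r] = M[r] = N$, whose Galois closure over $K$ is $L$ by the first step, and the proposition follows.

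The argument is essentially a translation of the subgroup structure of $D_4$ into field theory, so there is no genuine obstacle; the one substantive point is verifying that the normal core in $D_4$ of a non-normal order-$2$ subgroup is trivial, which ensures that $L$ itself (rather than some proper Galois subextension) is the Galois closure of $K[q, r]$.
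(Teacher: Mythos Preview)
Your proof is correct and follows essentially the same approach as the paper: pick a non-normal order-$2$ subgroup of $\text{Gal}(L|K)$, sandwich it inside an index-$2$ subgroup, and apply Artin--Schreier theory twice to the resulting tower $K \subset M \subset N$. Your version is slightly more explicit in justifying, via the triviality of the normal core, that the Galois closure of $N$ is all of $L$; the paper instead relies implicitly on the fact that $[L:N]=2$ leaves no room for a proper intermediate Galois closure.
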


\begin{proof}
Note that $D_4$ contains a subgroup of index two containing a non-normal subgroup of index four. Thus there exists a non-normal degree four subfield $L'$ of $L$ over $K$ containing a degree two subfield $K'$ of $L$. Then there exist $F \in K$ and $q\in K^{\text{alg}}$ such that $q^2 + q = F$ and $K[q] = K'$. Hence there exist $G, H \in K$ and $r \in K^{\text{alg}}$ such that $r^2 + r = Gq + H$ and $L' = K'[r] = K[q,r]$. Since $L'\subset L$ is not Galois over $K$, it follows that $L$ is the Galois closure of $L' = K[q,r]$ over $K$, as desired.
\end{proof}

\begin{prop}\label{d4struc}
Suppose that $F, G, H \in K$, and $q,r,s \in K^{\emph{alg}}$ such that $q^2 + q = F$, $r^2 + r = Gq + H$ and $s^2 + s = G$, and $L$ is the Galois closure over $K$ of $K[q,r]$. Then
\begin{enumerate}[label=\emph{(\arabic*)}]
\item the degree two subfields of $L$ are $K[q], K[s]$ and $K[q+s]$,
\item the unique degree four normal subfield of $L$ is $K[q,s]$,
\item the non-normal degree four subfields of $L$ containing $K[q]$ are $K[q,r]$ and $K[q, r+s]$,
\item the non-normal degree four subfields of $L$ containing $K[s]$ are $K[s,r+qs]$ and $K[s,r+qs+q]$, and
\item $L = K[q,r,s]$.
\end{enumerate}
\end{prop}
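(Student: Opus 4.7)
The hypothesis that $L$ is the Galois closure of $K[q,r]$ with $\text{Gal}(L|K) \cong D_4$ places us in case (4) of Proposition~\ref{classprop}, so $[G] \neq 0$ over $M = K[q]$; by Lemma~\ref{kerlem1}, this also gives $[G] \neq 0$ over $K$ and $[G] \neq [F]$ over $K$. As a preliminary, I would observe that $s \in L$: if $\sigma \in \text{Gal}(L|K)$ extends the nontrivial element of $\text{Gal}(M|K)$, then $\sigma(r)^2 + \sigma(r) = \sigma(Gq+H) = Gq + G + H = (r+s)^2 + (r+s)$, so $s = \sigma(r) + r$ or $\sigma(r) + r + 1$, both in $L$. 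Since $[F], [G]$, and $[F+G]$ are pairwise distinct nonzero classes in $K/\wp(K)$, the compositum $K[q,s]$ is a $\mathbb{Z}/2\mathbb{Z} \times \mathbb{Z}/2\mathbb{Z}$-Galois extension of $K$ of degree $4$, whose degree-two subfields are exactly $K[q], K[s], K[q+s]$ (using $(q+s)^2 + (q+s) = F+G$).

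The subgroup structure of $D_4$ then drives (1) and (2). Since the center of $D_4$ is the unique normal subgroup of order $2$, $L/K$ has a unique normal degree-four subfield, which must be the Galois extension $K[q,s]$; this is (2). Moreover, every subgroup of $D_4$ of order $4$ contains the center, so every degree-two subfield of $L$ is contained in $K[q,s]$; the three subfields identified above yield (1). For (3), $K[q,r]$ is non-normal and contains $K[q]$; applying $\sigma$ gives another such subfield $\sigma(K[q,r]) = K[q, \sigma(r)] = K[q, r+s]$, distinct from $K[q,r]$ because $K[q,r]$ is not Galois over $K$. A $D_4$-count --- each Klein-four subgroup contains exactly two non-normal subgroups of order $2$ --- shows these are the only two.

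For (4), I would set $r' = r + qs$ and use Lemma~\ref{qslem} to compute $(r')^2 + r' = H + Fs^2 = Fs + (FG+H) \in K[s]$. The tower $K \subset K[s] \subset K[s, r']$ is then in the framework of Lemma~\ref{gallem} with the roles of $q$ and $s$ swapped, and the non-Galois condition for $K[s, r']$ over $K$ becomes $[F] \neq 0$ over $K[s]$, which holds since $K[q] \neq K[s]$. So $K[s, r+qs]$ is a non-normal degree-four subfield of $L$ containing $K[s]$. Replacing $r'$ by $r' + q$ shifts the Artin--Schreier class over $K[s]$ by $[F]$, which is nonzero, so $K[s, r+qs+q]$ is a second, distinct such subfield; the analogous $D_4$-count caps them at two. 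For (5), I would observe that any non-normal degree-four subfield of $L$ contains a unique degree-two subfield (else its two would compose to a biquadratic, hence Galois, degree-four extension equal to it --- a contradiction), so the unique degree-two subfield of $K[q,r]$ is $K[q]$; since $K[s] \neq K[q]$, this forces $s \notin K[q,r]$, and hence $[K[q,r,s]:K] \geq 8 = [L:K]$, giving $L = K[q,r,s]$.

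The main obstacle I anticipate is the bookkeeping in (4): the substitution $r \mapsto r+qs$ must be handled carefully to reuse the machinery of Section~3 with the roles of $q$ and $s$ swapped, and one must verify that $r+qs \notin K[s]$ so that $K[s, r+qs]$ is genuinely of degree $4$ over $K$.
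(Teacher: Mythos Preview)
Your proof is correct and follows essentially the same approach as the paper's, which likewise establishes $s \in L$ via the computation of $\sigma(r)$, identifies $K[q,s]$ as the unique normal degree-four subfield via the $D_4$ subgroup lattice, and uses Lemma~\ref{qslem} to handle (4). The only notable expository difference is in (5): the paper argues more directly that $K[q,s]$ and $K[q,r]$ are distinct degree-four subfields of $L$, so their compositum $K[q,r,s]$ has degree $8$ and equals $L$.
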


\begin{figure}
\[\xymatrix{ & & L=K[q,r,s] 
\ar@{-}[dll] \ar@{-}[dl] \ar@{-}[d] \ar@{-}[dr] \ar@{-}[drr] & & \\
K[s, r+qs] \ar@{-}[dr]_{Fs+H+FG} & K[s, r + qs + q] \ar@{-}[d] & 
K[q,s] \ar@{-}[dl] \ar@{-}[d] \ar@{-}[dr] & 
K[q, r + s] \ar@{-}[d] & K[q, r] \ar@{-}[dl]^{Gq+H} \\
& K[s] \ar@{-}[dr]_G & K[q+s] \ar@{-}[d]^{F+G} & K[q] \ar@{-}[dl]^F & \\
& & K & & }\]
\caption{Subfields of $L$ over $K$}
\label{d4fig}
\end{figure}
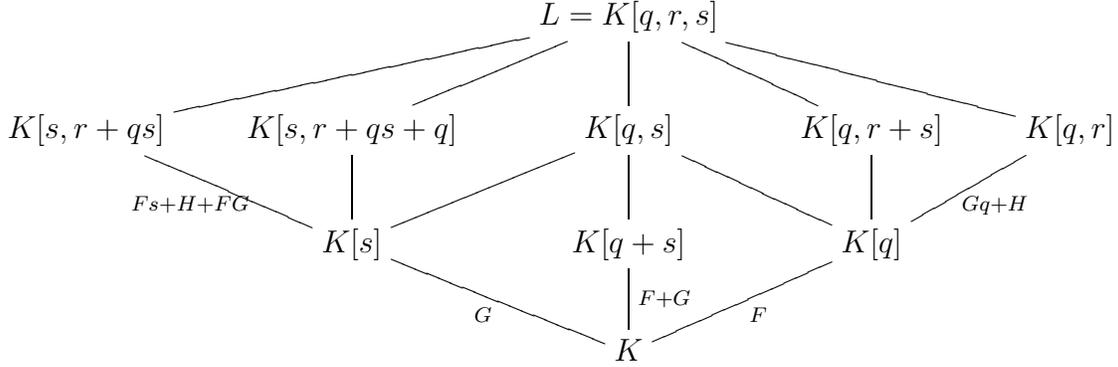

\begin{remark} The situation described in Propostion~\ref{d4struc} may be visualized as in Figure~\ref{d4fig}.
\end{remark}

\begin{proof}
Let $\sigma\in \text{Gal}(L|K)$ such that $\sigma|_{K[q]}$ is non-trivial. Then the non-normal degree four subfields of $L$ containing $K[q]$ are $K[q,r]$ and $\sigma(K[q,r])$. Since $(r+s)^2 + (r+s) = Gq + H + G = \sigma(Gq+H)$, it follows that $\sigma(K[q,r]) = K[q,r+s]$. Statement (3) now follows immediately. 

To prove (1), (2) and (5), note that, since $K[q,r]$ and $K[q,r+s]$ are both subfields of $L$, it follows that $s = r + (r+s) \in L$. Moreover, since $K[q,r]$ is not Galois over $K$, $[G] \neq 0$ over $K[q]$ by Lemma~\ref{gallem}. Hence $K[q,s] = K[q][s]$ is a degree four extension of $K$. Since $\sigma(G) = G$, it follows by Lemma~\ref{gallem} that $K[q,s]$ is a Galois extension of $K$. Statement (2) now follows, and, since $K[q]$, $K[s]$, and $K[q+s]$ are the three degree two subfields of $K[q,s]$, so does (1). Finally, since $K[q,s]$ and $K[q,r]$ are distinct degree four subfields of $L$, $L = K[q,r,s]$; \emph{i.e.}, (5) holds.

To prove (4), recall that $(qs)^2 + qs = Gq + Fs + FG$ by Lemma~\ref{qslem}.
Thus $[Gq + H] = [Fs + H + FG]$ over $K[q,s]$. Since $[G] \neq 0$ over $K[q]$, $[F] \neq 0$ over $K[s]$. As such, since $Fs + H + FG = (r+qs)^2 + (r+qs)$, it follows that $K[s, r+qs]$ is a non-Galois degree four subfield of $L$ by Proposition~\ref{classprop}.
Hence the non-normal degree four subfields of $L$ containing $K[s]$ are $K[s,r+qs]$ and $\tau(K[s,r+qs])$, where $\tau\in\text{Gal}(L|K)$ such that $\tau|_{K[s]}$ is non-trivial. Since
\[(r+qs+q)^2 + (r+qs+q) = Fs + H + FG + F = \tau(Fs+H+FG),\]
it follows that $\tau(K[s,r+qs]) = K[s,r+qs+q]$. Statement (4) now follows immediately.
\end{proof}

\section{$D_4$-Extensions of Complete Discrete Valuation Fields of Characteristic Two}

Having described the structure of $D_4$-extensions over all fields of characteristic two, in this section we shall restrict our attention to complete discrete valuation fields of characteristic two and shall determine (for suitably nice extensions) formulas for the ramification breaks of the given extension.  We shall, moreover, parametrize and classify $D_4$-extensions of complete discrete valuation fields of characteristic two with algebraically closed residue field. To this end, let $k$ be a (not necessarily algebraically closed) field of characteristic two, let $K = k((t))$ be the field of Laurent series over $k$, let $K^{\text{alg}}$ denote a fixed algebraic closure of $K$, and let $L\subseteq K^{\text{alg}}$ be a Galois extension of $K$ such that $\text{Gal}(L|K) \le D_4$.

\begin{defn} The Galois extension $L$ over $K$ is \emph{generated by standard form elements} if there exist $F, G, H \in K = k((t))$ in standard form with respect to $t$ and $q, r ,s \in K^{\text{alg}}$ such that $q^2 + q = F, r^2 + r = Gq + H$ and $s^2 + s = G$, and such that $L = K[q,r,s]$.


\end{defn}

\begin{remark}\label{stdrmk}
By Proposition~\ref{classprop}, $\text{Gal}(L|K) \cong D_4$ unless $F = 0$, $G = 0$, or $F = G$. If $\text{Gal}(L|K) \cong D_4$, we say that $L$ is a \emph{$D_4$-standard form} extension of $K$.
\end{remark}

The triple $(F,G,H)$ may be considered a sort of `canonical form' for a $D_4$-standard form extension of $K$, though any given $D_4$-standard form extension is associated not to one triple, but to several.

\subsection{Parametrization of $D_4$-Extensions via Standard Form Elements}

Suppose now that $k$ is algebraically closed and that $\text{Gal}(L|K) \cong D_4$.

\begin{prop}\label{existprop}
There exist $F, G, H \in K = k((t))$ in standard form with respect to $t$, and $q, r \in K^{\emph{alg}}$ such that $q^2 + q = F, r^2 + r = Gq + H$,
 and $L$ is the Galois closure over $K$ of $K[q,r]$.
\end{prop}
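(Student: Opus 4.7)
The plan is to take any presentation of $L$ supplied by Proposition~\ref{d4exist} and massage it so that all three field parameters become standard form elements of $K$ with respect to $t$, while arranging for the Galois closure of the resulting degree-four subfield to remain $L$. The key enabler is Proposition~\ref{stdformprop}, which guarantees (since $k$ is algebraically closed) that every Artin--Schreier equivalence class over $K$ contains a unique standard form representative.

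First, by Proposition~\ref{d4exist}, fix $F_0, G_0, H_0 \in K$ and $q_0, r_0 \in K^{\text{alg}}$ with $q_0^2 + q_0 = F_0$, $r_0^2 + r_0 = G_0 q_0 + H_0$, and $L$ the Galois closure over $K$ of $K[q_0, r_0]$. Let $F \in K$ be the standard form representative of $[F_0]$; since $F + F_0 \in \wp(K)$ by construction, pick $\gamma \in K$ with $\gamma^2 + \gamma = F + F_0$ and set $q := q_0 + \gamma$, so that $q^2 + q = F$ and $K[q] = K[q_0]$. Analogously, let $G \in K$ be the standard form representative of $[G_0]$ and choose $\beta \in K$ with $\beta^2 + \beta = G + G_0$.

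The heart of the argument is then to introduce a parameter $\alpha \in K$, define $r := r_0 + \alpha + \beta q$, and compute, using $q^2 = q + F$ and $q_0 = q + \gamma$, that
\[ r^2 + r = (G_0 + \beta^2 + \beta)q + \bigl(G_0 \gamma + H_0 + \beta^2 F + \alpha^2 + \alpha\bigr) = Gq + H, \]
where $H := G_0 \gamma + H_0 + \beta^2 F + \alpha^2 + \alpha$. As $\alpha$ varies over $K$, the element $H$ varies over the entire Artin--Schreier class of $G_0 \gamma + H_0 + \beta^2 F$ in $K/\wp(K)$, so Proposition~\ref{stdformprop} supplies an $\alpha \in K$ that makes $H$ the (unique) standard form representative of that class.

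Finally, $L$ is indeed the Galois closure of $K[q,r]$: the equality $K[q_0, r_0] = K[q, r_0]$ holds because $\gamma \in K$, and the inclusions $r \in K[q, r_0]$ (from $\alpha, \beta \in K$ and $q \in K[q, r_0]$) together with $r_0 = r + \alpha + \beta q \in K[q, r]$ combine to give $K[q, r] = K[q_0, r_0]$, so the two Galois closures agree. The main subtlety is the potential tension among the three standard form constraints, resolved here by the observation that, once $F$ and $G$ have been replaced by their standard form representatives, the choice of $\alpha \in K$ still lets us freely adjust $H$ modulo $\wp(K)$, which by Proposition~\ref{stdformprop} is exactly enough flexibility to force $H$ into standard form as well.
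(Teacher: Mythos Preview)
Your proof is correct and follows essentially the same approach as the paper: replace the initial parameters by their standard form Artin--Schreier representatives (via Proposition~\ref{stdformprop}) and adjust the third parameter within its Artin--Schreier class so that the degree-four subfield $K[q,r]$ remains unchanged. The only difference is cosmetic: the paper invokes Proposition~\ref{nongalprop} (and thus the auxiliary element $s$) to certify $K[q,r]=K[q_0,r_0]$, whereas you carry out the substitution $r=r_0+\alpha+\beta q$ and the verification $K[q,r]=K[q_0,r_0]$ by hand, which is slightly more self-contained but amounts to the same computation.
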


\begin{proof} By Proposition~\ref{d4exist}, there exist $F', G', H' \in K$, not necessarily in standard form, and $q', r', s' \in K^{\text{alg}}$ such that $(q')^2 + q' = F'$, $(r')^2 + r' = G'q' + H'$, and $(s')^2 + s' = G'$, and such that $L$ is the Galois closure over $K$ of $K[q',r']$. Since $k$ is algebraically closed, by Proposition~\ref{stdformprop} there exist unique elements $F,G\in K = k((t))$ in standard form such that $[F] = [F']$ and $[G] = [G']$, respectively, over $K$. Let $q, s\in K^{\text{alg}}$ such that $q^2 + q = F$ and $s^2 + s = G$.
Since 
\[(q+q')^2 + (q+q') = \left(q^2 + q\right) + \left((q')^2 + q'\right) = F + F',\]
and since $[F] = [F']$ over $K$, it follows that $q + q' \in K$. Similarly, $s + s' \in K$, and thus $H' + G'(q+q') + F(s+s')^2 + G \in K$ as well.
Therefore, there exists $H\in K$ in standard form such that $[H] = [H' + G'(q+q') + F(s+s')^2]$ over $K[q] = K[q']$. Let $r\in K^{\text{alg}}$ such that $r^2 + r = Gq +H$. 
Then $K[q,r] = K[q', r']$ by Proposition~\ref{nongalprop}.
\end{proof}


Note that, by Proposition~\ref{d4struc}, $L = K[q,r,s]$. Proposition~\ref{d4exist} thus has the following corollary:

\begin{cor}\label{stdformcor}
The extension $L$ is a $D_4$-standard form extension of $K$.
\end{cor}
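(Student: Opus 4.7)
The plan is to observe that Corollary~\ref{stdformcor} is an essentially immediate consequence of chaining Proposition~\ref{existprop} together with the structural description given in Proposition~\ref{d4struc}; the only real content is to verify that the data produced by Proposition~\ref{existprop} literally fits the definition of a $D_4$-standard form extension.

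First, I would apply Proposition~\ref{existprop} to extract $F, G, H \in K$, each in standard form with respect to $t$, along with $q, r \in K^{\text{alg}}$ satisfying $q^2 + q = F$ and $r^2 + r = Gq + H$, such that $L$ is the Galois closure over $K$ of $K[q,r]$. Next, I would pick any $s \in K^{\text{alg}}$ with $s^2 + s = G$; such an $s$ exists inside $K^{\text{alg}}$ simply because $K^{\text{alg}}$ is algebraically closed.

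Then I would invoke Proposition~\ref{d4struc}(5), applied to these particular $F, G, H, q, r, s$: since $L$ is by construction the Galois closure over $K$ of $K[q,r]$, that proposition identifies $L$ with $K[q,r,s]$. At this point all the ingredients in the definition of ``generated by standard form elements'' are in place: $F, G, H$ are in standard form, the three Artin--Schreier relations hold, and $L = K[q,r,s]$. Finally, since by hypothesis $\text{Gal}(L|K) \cong D_4$, Remark~\ref{stdrmk} lets us upgrade ``generated by standard form elements'' to ``$D_4$-standard form extension.''

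There is no real obstacle here; the only thing worth double-checking is that Proposition~\ref{d4struc} is being applied in a legitimate regime (namely that $L$ is genuinely the Galois closure of $K[q,r]$ and not accidentally a smaller extension), but this is exactly what Proposition~\ref{existprop} delivers, and the $D_4$ hypothesis on $\text{Gal}(L|K)$ guarantees that $K[q,r]$ is a non-Galois degree four subfield, matching the setup of Proposition~\ref{d4struc}.
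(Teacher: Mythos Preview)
Your proposal is correct and follows essentially the same route as the paper: the paper simply remarks, immediately before the corollary, that by Proposition~\ref{d4struc} one has $L = K[q,r,s]$ for the $F,G,H,q,r$ produced by Proposition~\ref{existprop}, which is exactly the chain you describe. Your explicit mention of choosing $s$ and invoking Remark~\ref{stdrmk} just fills in details the paper leaves implicit.
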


As noted above, the `standard form' triple $(F,G,H)$ is not unique; indeed, in this case any given $D_4$-extension of $K$ is associated to eight distinct triples, which are enumerated in the following proposition.

\begin{prop}\label{triplesprop}
Let $L_0 \subseteq K^{\emph{alg}}$ be another Galois extension of $K$ such that $\emph{Gal}(L_0|K) \cong D_4$, and let $F_0, G_0, H_0 \in K$ in standard form such that $L_0$ is the Galois closure of $K[q_0, r_0]$, where $q_0, r_0 \in K^{\emph{alg}}$ such that $q_0^2 + q_0 = F_0$ and $r_0^2 + r_0 = Gq_0 + H_0$. Then the fields $L_0$ and $L$ are equal if and only if one of the four following conditions holds.
\begin{enumerate}[label=\emph{(\arabic*)}]
\item $F_0 = F$, $G_0 = G$, and $[H_0] = [H]$ over $K[q_0]$.
\item $F_0 = F$, $G_0 = G$, and $[H_0] = [H + G]$ over $K[q_0]$
\item $F_0 = G$, $G_0 = F$, and $[H_0] = [H+FG]$ over $K[q_0]$.
\item $F_0 = G$, $G_0 = F$, and $[H_0] = [H+FG+F]$ over $K[q_0]$.
\end{enumerate}
\end{prop}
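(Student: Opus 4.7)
The strategy is to reduce the equality $L_0 = L$ to identifying $K[q_0, r_0]$ with one of the four non-normal degree four subfields of $L$ enumerated in Proposition~\ref{d4struc}(3)--(4) --- namely $K[q, r]$, $K[q, r+s]$, $K[s, r+qs]$, and $K[s, r+qs+q]$ --- and then translating each such identification into an Artin--Schreier condition on the triple $(F_0, G_0, H_0)$ by means of Proposition~\ref{nongalprop}, normalizing throughout with the standard-form uniqueness of Proposition~\ref{stdformuniq}.

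For the forward direction, I would suppose $L_0 = L$ and argue as follows. Since $L_0 \cong D_4$, the subfield $K[q_0, r_0]$ is non-normal of degree four in $L$, and Proposition~\ref{d4struc}(3)--(4) forces $K[q_0] \in \{K[q], K[s]\}$, whence $F_0 \in \{F, G\}$ by Proposition~\ref{stdformuniq}. First assume $F_0 = F$; then $K[q_0, r_0]$ is either $K[q, r]$ or $K[q, r+s]$, and I would apply Proposition~\ref{nongalprop} with $M = K[q]$, $N = K[q, r]$, and $\sigma$ chosen so that $\sigma(N) = K[q, r+s]$. In either case the resulting conditions yield $[G_0] = [G]$ over $K$, hence $G_0 = G$ by standard-form uniqueness; the residual condition on $[H_0]$ then collapses --- using $q + q_0, s + s_0 \in \{0, 1\}$ and $[F] = 0$ over $M$ --- to either $[H_0] = [H]$ or $[H_0] = [H + G]$ over $K[q_0]$, giving condition (1) or (2). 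The symmetric case $F_0 = G$, with $K[q_0] = K[s]$, relies on the identifications $(r + qs)^2 + (r + qs) = Fs + H + FG$ and $(r + qs + q)^2 + (r + qs + q) = Fs + H + FG + F$ from Lemma~\ref{qslem} (already used in the proof of Proposition~\ref{d4struc}(4)); Proposition~\ref{nongalprop} applied with $M = K[s]$ and $N = K[s, r + qs]$ then produces condition (3) or (4).

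For the backward direction, each of the four conditions directly exhibits $K[q_0, r_0]$ as a non-normal degree four subfield of $L$. Under condition (1), for instance, taking $q_0 = q$ gives $r_0^2 + r_0 = Gq + H_0$ with $[Gq + H_0] = [Gq + H]$ over $K[q]$, so $K[q_0, r_0] = K[q, r]$ and hence $L_0 = L$; the other three cases are analogous, using the Artin--Schreier generators identified above for cases (3) and (4).

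The most delicate step will be excluding the spurious possibility $G_0 = F + G$ in the case $F_0 = F$ (which would correspond to $K[s_0] = K[q + s]$). I plan to rule this out by observing that $[Fq]$ lies outside the image of $K/\wp(K) \to K[q]/\wp(K[q])$ --- since $a^2 + a = F$ has no solution in $K$, as $F$ is a non-zero standard form element --- so $[(F + G)q + H_0]$ cannot coincide with either $[Gq + H]$ or $[Gq + G + H]$ over $K[q]$ for any $H_0 \in K$. Together with $F \neq G$ (ensured by the $D_4$-hypothesis and Remark~\ref{stdrmk}), this forces $G_0 = G$, and the symmetric argument handles $F_0 = G$.
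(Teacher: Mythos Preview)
Your proposal is correct and follows essentially the same route as the paper: reduce $L_0 = L$ to the statement that $K[q_0, r_0]$ equals one of the four non-normal degree-four subfields of $L$ listed in Proposition~\ref{d4struc}, then translate each identification via Proposition~\ref{nongalprop} together with Proposition~\ref{stdformuniq}. The paper's proof is simply terser --- it asserts in one line that each of the four subfield identifications corresponds, via Proposition~\ref{nongalprop}, to one of conditions (1)--(4) --- but the content is identical to your second and third paragraphs.

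Your final paragraph is unnecessary. Once Proposition~\ref{d4struc}(3) gives $K[q_0, r_0] \in \{K[q, r], K[q, r+s]\}$, Proposition~\ref{nongalprop} already delivers $[G_0] = [G]$ over $K$ as part of its output, and standard-form uniqueness forces $G_0 = G$; you say exactly this in your second paragraph, so there is no ``spurious possibility $G_0 = F+G$'' left to exclude. The argument you sketch to rule it out is correct (it is essentially the contrapositive of Lemma~\ref{kerlem2} applied with $F$ in place of $G$), but it duplicates work already absorbed into Proposition~\ref{nongalprop}.
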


\begin{proof}
Note from Propositon~\ref{d4struc} that $L_0$ and $L$ are equal if and only if $K[q_0, r_0]$ is equal to one of the four non-normal degree four subfields $K[q,r], K[q,r+s], K[s,r+qs], K[s,r+qs+q]$ of $L$.

By Proposition~\ref{nongalprop}, $K[q_0, r_0] = K[q,r]$ if and only if condition (1) holds, and $K[q_0, r_0] = K[q,r+s]$ if and only if condition (2) holds.
Similarly, since $(r+qs)^2 = Fs + H + FG$ by Lemma~\ref{qslem}, $K[q_0, r_0] = K[s,r+qs]$ if and only if condition (3) holds, and $K[q_0, r_0] = K[s,r+qs+q]$ if and only if condition (4) holds.
\end{proof}


\begin{cor}\label{triplescor}
Let $\mathcal{K}$ be the set of standard form elements of $K$, and let $\mathcal{G}$ be the set of Galois extensions of $K$ contained in $K^{\emph{alg}}$ whose Galois group over $K$ is isomorphic to $D_4$. Furthermore, let $\mathcal{D} = \{(\phi, \gamma, \eta) \in \mathcal{K}^3 \mid \phi = 0 \text{ or }\gamma = 0 \text{ or }\gamma = \phi\}$, and define $\Phi: \mathcal{K}^3\backslash \mathcal{D}\to \mathcal{G}$ such that, for all $(\phi, \gamma, \eta) \in \mathcal{K}^3 \backslash \mathcal{D}$, $\Phi(\phi,\gamma,\eta)$ is the Galois closure of $K[\kappa, \rho]$, where $\kappa, \rho \in K^{\emph{alg}}$ such that $\kappa^2 + \kappa = \phi$ and $\rho^2 + \rho = \gamma\kappa + \eta$. Then $\Phi$ is surjective.
\end{cor}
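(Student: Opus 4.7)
The plan is to show that $\Phi$ is surjective by starting from an arbitrary $L \in \mathcal{G}$ and producing a preimage. Given such $L$ (a Galois extension of $K$ with Galois group isomorphic to $D_4$), the essential tool is Proposition~\ref{existprop}, which supplies standard-form elements $F, G, H \in K$ and elements $q, r \in K^{\text{alg}}$ satisfying $q^2 + q = F$ and $r^2 + r = Gq + H$ such that $L$ is the Galois closure of $K[q,r]$ over $K$. This is literally the same construction that defines $\Phi(F,G,H)$, so once I know that the triple $(F,G,H)$ lies in the domain $\mathcal{K}^3 \setminus \mathcal{D}$, the equality $\Phi(F,G,H) = L$ is immediate.

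The only remaining verification is that the triple $(F,G,H)$ produced by Proposition~\ref{existprop} does not lie in $\mathcal{D}$; equivalently, that none of the three conditions $F = 0$, $G = 0$, $F = G$ can hold. This follows directly from Remark~\ref{stdrmk} (itself a consequence of Proposition~\ref{classprop}): for a standard-form data set as above, any one of those three degeneracies forces the Galois closure of $K[q,r]$ to be a proper subextension of a $D_4$-extension (namely $K[q]$ itself, a $\mathbb{Z}/2 \times \mathbb{Z}/2$-extension, or a $\mathbb{Z}/4$-extension). Contrapositively, the assumption $\text{Gal}(L|K) \cong D_4$ forces $(F,G,H) \in \mathcal{K}^3 \setminus \mathcal{D}$.

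I do not foresee any serious obstacle, since the substantive content of the corollary is already established: Proposition~\ref{existprop} does the existence work of producing standard-form representatives, and Proposition~\ref{classprop} (via Remark~\ref{stdrmk}) provides the classification ruling out the degenerate triples precisely when the Galois group is $D_4$. The only care required is to ensure that the definition of $\Phi$ matches verbatim the construction of $L$ from $(F,G,H)$ given by Proposition~\ref{existprop}, which it does.
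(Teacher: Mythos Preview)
Your argument is correct and is exactly the intended one: the paper states this as an immediate corollary without giving a separate proof, and the content is precisely the combination of Proposition~\ref{existprop} (to produce a standard-form triple for a given $L\in\mathcal{G}$) with Remark~\ref{stdrmk}/Proposition~\ref{classprop} (to ensure that triple avoids $\mathcal{D}$). There is nothing missing or different from the paper's approach.
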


\begin{remark}\label{eightrmk}
By Lemma~\ref{kerlem1}, each condition in Proposition~\ref{triplesprop} corresponds to two pre-images under $\Phi$ of any given element of $\mathcal{G}$. Thus the surjection $\Phi$ is, in fact, eight-to-one.
\end{remark}

\subsection{Computation of Ramification Breaks}\label{rambrkss}

In this subsection, we shall neither suppose that the residue field $k$ of $K$ is algebraically closed, nor insist that $L$ be generated by standard form elements.  However, we shall suppose that $L$ is a totally ramified extension of $K$, and that $L$ is a \emph{$D_4$-odd form} extension of $K$, as defined below.

\begin{defn}\label{odddef} The Galois extension $L$ over $K$ is a \emph{$D_4$-odd form} extension of $K$ if, firstly, $\text{Gal}(L|K) \cong D_4$ and, secondly, there exist $F, G, H \in K = k((t))$ and $q, r ,s \in K^{\text{alg}}$ such that 
\begin{enumerate}[label=(\arabic*)]
\item $q^2 + q = F, r^2 + r = Gq + H$ and $s^2 + s = G$,
\item $L = K[q,r,s]$,
\item each of $\deg_{t^{-1}} F$, $\deg_{t^{-1}} G$ and $\deg_{t^{-1}}(F + G)$ is both positive and odd, and
\item $\deg_{t^{-1}} H$ is not both positive and even.
\end{enumerate}
\end{defn}

\begin{remark}\label{totramrmk}
Since $K$ is a complete discrete valuation field, a finite extension $M$ of $K$ is totally ramified if and only if the residue field of $M$ is equal to $k$, the residue field of $K$.
\end{remark}

Let $F, G, H, q, r$ and $s$ be as in Definition~\ref{odddef}, and let $f = \deg_{t^{-1}}(F)$, $g = \deg_{t^{-1}}(G), h = \deg_{t^{-1}}(H)$ and $d = \deg_{t^{-1}}(F+G)$. Suppose also that $f\le g$.


The degrees $d,f,g$ and $h$ suffice to determine the lower and upper ramification breaks of the extension $L$ of $K$. In demonstrating this, the following two lemmas, both adapted from Lemme~1.1.4 in~\cite{Ray99}, will be helpful.

\begin{lem}\label{raylem1}
Let $K_0 = k((t_0))$, let $K_1$ and $K_2$ be distinct Artin--Schreier extensions of $K_0$, and let $K_3$ be the unique degree two subfield of $K_1K_2$ distinct from both $K_1$ and $K_2$.
Moreover, let $c_1$, $c_2$ and $c_3$ denote the conductors over $K_0$ of $K_1$, $K_2$ and $K_3$, respectively. Suppose that $K_1K_2$ is totally ramified over $K_0$, and that $c_1 < c_2$. Then $c_3 = c_2$, the conductor of $K_1K_2$ over $K_1$ is $2c_2 - c_1$, and the conductor of $K_1K_2$ over $K_2$ is $c_1$.
\end{lem}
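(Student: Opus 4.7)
The plan is to obtain all three claims by analyzing the upper ramification filtration of $G := \mathrm{Gal}(K_1K_2 / K_0) \cong \mathbb{Z}/2\mathbb{Z} \times \mathbb{Z}/2\mathbb{Z}$, converting to lower numbering via Herbrand's function, and then applying Hilbert's different formula together with transitivity of the different.

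First I identify $c_3$. Set $M := K_1 K_2$. Since $M/K_0$ is totally ramified of degree four, $G_0 = G$, and Hasse--Arf applied to the abelian extension $M/K_0$ guarantees that the (at most) two jumps $u_1 \le u_2$ in the upper ramification filtration of $G$ are integers. Let $h \subset G$ be the order-two subgroup which remains after the first jump, so that $G^u = G$ for $u \le u_1$, $G^u = h$ for $u_1 < u \le u_2$, and $G^u = 1$ for $u > u_2$. Compatibility of upper numbering with quotients shows that the conductor of $K_i/K_0$ equals the largest $u$ such that $G^u \not\subseteq \mathrm{Gal}(M/K_i)$. Hence $c_i = u_1$ if $\mathrm{Gal}(M/K_i) = h$, and $c_i = u_2$ otherwise. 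Since exactly one of the three order-two subgroups of $G$ is $h$, exactly one of $c_1, c_2, c_3$ equals $u_1$; the inequality $c_1 < c_2$ forces this distinguished index to be $1$, so $c_1 = u_1$ and $c_2 = c_3 = u_2$, yielding $c_3 = c_2$.

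Next I compute $d_1$ and $d_2$, the conductors of $M/K_1$ and $M/K_2$. Converting the upper breaks $u_1 = c_1$, $u_2 = c_2$ to the corresponding lower breaks via the Herbrand function yields $b_1 = c_1$ and $b_2 = 2c_2 - c_1$. Hilbert's different formula then gives
\[
v_M(\mathcal{D}_{M/K_0}) = \sum_{i \ge 0}(|G_i| - 1) = 3(b_1 + 1) + (b_2 - b_1) = 2b_1 + b_2 + 3 = c_1 + 2c_2 + 3.
\]
Since both $K_i/K_0$ and $M/K_i$ are totally ramified of degree two, Hilbert also gives $v_{K_i}(\mathcal{D}_{K_i/K_0}) = c_i + 1$ and $v_M(\mathcal{D}_{M/K_i}) = d_i + 1$. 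By transitivity of the different,
\[
v_M(\mathcal{D}_{M/K_0}) = v_M(\mathcal{D}_{M/K_i}) + e(M/K_i) \cdot v_{K_i}(\mathcal{D}_{K_i/K_0}) = (d_i + 1) + 2(c_i + 1),
\]
so $d_i = c_1 + 2c_2 - 2c_i$. Setting $i = 1, 2$ gives $d_1 = 2c_2 - c_1$ and $d_2 = c_1$, as required.

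The main subtlety is the Herbrand conversion in the second step: one must verify that $G$ has two distinct upper jumps (which is automatic from $c_1 < c_2$) and track the $\phi$--$\psi$ identifications carefully to land on $b_2 = 2u_2 - u_1$. Once this is in hand, everything else reduces to bookkeeping with Hilbert's formula and transitivity of the different.
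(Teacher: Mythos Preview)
Your proof is correct and, for the identification of $c_3$, proceeds exactly as the paper does: both arguments read off the upper ramification filtration of $G \cong (\mathbb{Z}/2\mathbb{Z})^2$ via compatibility with quotients, deduce that the two upper jumps are $c_1 < c_2$, and conclude that the subgroup surviving past $u_1$ is $\mathrm{Gal}(M/K_1)$, forcing $c_3 = c_2$.

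The only genuine divergence is in the computation of the two remaining conductors. The paper passes to the lower filtration $(c_1, 2c_2 - c_1)$ and then uses the subgroup formula $(H_j)_i = \Gamma_i \cap H_j$ directly to read off the conductors of $M/K_1$ and $M/K_2$. You instead compute $v_M(\mathcal{D}_{M/K_0})$ by Hilbert's formula and recover $d_1, d_2$ from transitivity of the different. Both routes are short and standard; the paper's is marginally more self-contained (no appeal to Hasse--Arf or transitivity), while yours has the mild advantage of avoiding any explicit identification of which lower groups $\Gamma_i$ contain which $H_j$.
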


\begin{proof}
Note that, since $K_1$ and $K_2$ are distinct, $K_1K_2$ is an Artin--Schreier extension both of $K_1$ and of $K_2$.

Let $\Gamma$ be the Galois group of $K_1K_2$ over $K$, and let $H_1$, $H_2$ and $H_3$ denote the subgroups of $\Gamma$ consisting of those elements of $\Gamma$ fixing $K_1$, $K_2$ and $K_3$, respectively.
By Proposition~IV.14 in~\cite{LocalFields},
$(\Gamma/H_j)^i = \Gamma^i H_j/H_j$ for all $i\ge -1, j \in \{1,2,3\}$. Thus each of the conductors $c_j$ of $\Gamma/H_j$ is an upper ramification break of $K_1K_2$ over $K_0$. Since $c_1 < c_2$, the sequence of upper ramification breaks of $K_1K_2$ over $K_0$ is $(c_1, c_2)$. Hence $\Gamma^i H_1/H_1 = (\Gamma/H_1)^i = 1$ for all $i > c_1$, and $\Gamma^i = H_1$ for all $c_1 < i \le c_2$. Thus $(\Gamma/H_3)^i = \Gamma^i H_3/H_3 = \Gamma/H_3$ for all $i \le c_2$; as such, $c_3 = c_2$.

To determine the conductors of $K_1K_2$ over $K_1$ and $K_2$, consider the corresponding sequence of lower ramification breaks of $K_1K_2$ over $K_0$. By Herbrand's Formula (see Section IV.3 in~\cite{LocalFields}), this sequence is 
$(c_1, 2c_2 - c_1)$. Thus $\Gamma_i = H_1$ for all $c_1 < i \le 2c_2 - c_1$. Since, by Proposition~IV.2 in ~\cite{LocalFields}, $(H_j)_i = \Gamma_i \cap H_j$ for all $i\ge -1, j \in \{1,2\}$, it follows that
\[(H_1)_i = \Gamma_i \cap H_1 = \begin{cases} H_1 & \text{if }i \le 2c_2 - c_1\\ 1 & \text{if } i > 2c_2 - c_1 \end{cases}\quad\text{and}\quad(H_2)_i = \Gamma_i \cap H_2 = \begin{cases} H_2 & \text{if }i \le c_1\\ 1 & \text{if } i > c_1 \end{cases};\]
\emph{i.e.}, that the conductor of $K_1K_2$ over $K_1$ is $2c_2 - c_1$, and  the conductor of $K_1K_2$ over $K_2$ is $c_1$.
\end{proof}

\begin{lem}\label{raylem2}
Let $K_0 = k((t_0))$, let $K_1$ and $K_2$ be distinct Artin--Schreier extensions of $K_0$, and let $c_1$ and $c_2$ denote the conductors over $K_0$ of $K_1$ and $K_2$, respectively. Moreover, let $K_3$ be the unique degree two subfield of $K_1K_2$ distinct from both $K_1$ and $K_2$, and let $c_3$ be the conductor of $K_3$ over $K_0$. Suppose that $K_1K_2$ is totally ramified over $K_0$, and that $c_1 = c_2$. Then $c_3 \le c_1$, and both the conductor of $K_1K_2$ over $K_1$ and the conductor of $K_1K_2$ over $K_2$ are equal to $c_3$.
\end{lem}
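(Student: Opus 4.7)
The plan is to mirror the approach of Lemma~\ref{raylem1}, again invoking Proposition~IV.14 of~\cite{LocalFields} (compatibility of upper numbering with quotients), Proposition~IV.2 (compatibility of lower numbering with subgroups), and Herbrand's formula, with the extra wrinkle that the symmetry $c_1 = c_2$ forces a case split. Writing $\Gamma = \text{Gal}(K_1K_2/K_0)$ and letting $H_1, H_2, H_3$ denote the three index-two subgroups of $\Gamma \cong (\mathbb{Z}/2\mathbb{Z})^2$ fixing $K_1, K_2, K_3$, I would first note via Proposition~IV.14 that $c_j$ is the largest $\nu$ with $\Gamma^\nu \not\subseteq H_j$. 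Setting $c := c_1 = c_2$, this forces $\Gamma^c \not\subseteq H_j$ for $j = 1, 2$ while $\Gamma^\nu \subseteq H_1 \cap H_2 = \{1\}$ for $\nu > c$. Since the only non-trivial subgroups of $\Gamma$ not contained in either $H_1$ or $H_2$ are $\Gamma$ itself and $H_3$, the next step is a case split: either (Case 1) $\Gamma^c = \Gamma$, or (Case 2) $\Gamma^c = H_3$.

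In Case 1 the upper filtration has a single break at $c$, and since $\Gamma^\nu \not\subseteq H_3$ amounts to $\Gamma^\nu = \Gamma$, this yields $c_3 = c = c_1$. The lower filtration coincides with the upper one here, so $\Gamma_i = \Gamma$ for $i \le c$ and $\Gamma_i = 1$ for $i > c$; Proposition~IV.2 then gives $(H_j)_i = \Gamma_i \cap H_j = H_j$ for $i \le c$ and $1$ otherwise, so the conductor of $K_1K_2/K_j$ is $c = c_3$. In Case 2 there must be an earlier upper break $b < c$ at which $\Gamma^\nu$ drops from $\Gamma$ to $H_3$, and since $\Gamma^\nu \not\subseteq H_3$ iff $\Gamma^\nu = \Gamma$, we get $c_3 = b < c_1$. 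Applying Herbrand with $[\Gamma : \Gamma^\nu] = 2$ for $b < \nu \le c$ converts the upper breaks $(b, c)$ into lower breaks $(c_3, 2c_1 - c_3)$, so $\Gamma_i = H_3$ on $c_3 < i \le 2c_1 - c_3$. Intersecting with $H_j$, and noting that $H_3 \cap H_j = 1$ for $j \in \{1,2\}$ because any two distinct index-two subgroups of $(\mathbb{Z}/2\mathbb{Z})^2$ meet trivially, Proposition~IV.2 yields $(H_j)_i = H_j$ for $i \le c_3$ and $1$ for $i > c_3$, so the conductor of $K_1K_2/K_j$ is again $c_3$.

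The main obstacle will be the bookkeeping in Case 2, which requires Herbrand to pass between upper and lower numberings and then a careful intersection with $H_j$; once the case split is set up, however, the symmetry $c_1 = c_2$ renders the $j = 1$ and $j = 2$ computations identical, so handling one automatically handles the other, and the bound $c_3 \le c_1$ drops out as a by-product in both cases.
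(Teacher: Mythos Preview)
Your proposal is correct and follows essentially the same approach as the paper: both arguments use Proposition~IV.14 of~\cite{LocalFields} to read off the $c_j$ from the upper filtration of $\Gamma$, Proposition~IV.2 to compute $(H_j)_i = \Gamma_i \cap H_j$, and the fact that $H_j \cap H_3 = 1$ for $j \in \{1,2\}$ to conclude. The only cosmetic difference is that the paper first proves $c_3 \le c_1$ by contradiction (observing that $c_3 > c_1 = c_2$ would force $\Gamma^i$ to equal both $H_1$ and $H_2$ for $c_1 < i \le c_3$) and then handles the two cases uniformly via the single statement ``$\Gamma^i = \Gamma$ for $i \le c_3$ and $\Gamma^i \subseteq H_3$ for $i > c_3$'', whereas you make the case split on $\Gamma^c \in \{\Gamma, H_3\}$ explicit and obtain $c_3 \le c_1$ as a by-product; the underlying logic is the same.
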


\begin{proof}
Let $\Gamma$ be the Galois group of $K_1K_2$ over $K$, and let $H_1$, $H_2$ and $H_3$ denote the subgroups of $\Gamma$ consisting of those elements of $\Gamma$ fixing $K_1$, $K_2$ and $K_3$, respectively.

If $c_3 > c_1 = c_2$, then, as in Lemma~\ref{raylem1}, $\Gamma^i = H_1$ and $\Gamma^i = H_2$ for all $c_1 = c_2 < i \le c_3$. Since $H_1 \neq H_2$, it follows that $c_3 \le c_1$. Thus $\Gamma^i = \Gamma$ for all $i\le c_3$, and $\Gamma^i \subseteq H_3$ for all $i > c_3$. Hence, for both $j=1$ and $j=2$, $(H_j)_i = \Gamma_i \cap H_j = \Gamma^i \cap H_j = H_j$ for all $i \le c_3$ and $(H_j)_i \subseteq H_j \cap H_3$ is trivial for all $i > c_3$; \emph{i.e.}, both the conductor of $K_1K_2$ over $K_1$ and the conductor of $K_1K_2$ over $K_2$ are equal to $c_3$.
\end{proof}

\begin{lem}\label{condlem}
The conductors of $K[q], K[s]$ and $K[q+s]$ over $K$ are $f$, $g$ and $d$, respectively. Moreover, the conductor of $K[q,r]$ over $K[q]$ is $2\max\{f+g, h\} - f$.
\end{lem}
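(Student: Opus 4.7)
The first assertion is a direct application of Proposition~\ref{ramprop}: by hypothesis (3) of Definition~\ref{odddef}, the elements $F$, $G$, and $F+G$ of $K$ each have positive odd $\deg_{t^{-1}}$ equal to $f$, $g$, $d$ respectively, so the corresponding Artin--Schreier extensions $K[q]$, $K[s]$, $K[q+s]$ of $K$ are totally ramified of conductors $f$, $g$, $d$. For the second assertion I compute the conductor of $K[q,r]/K[q]$ by analyzing the class $[Gq + H] \in K[q]/\wp(K[q])$. Since $L/K$ is totally ramified, so too is $K[q,r]/K[q]$, and its conductor equals the largest odd positive integer $c$ such that some representative $\beta_* \sim Gq + H$ in $K[q]/\wp(K[q])$ satisfies $v_{K[q]}(\beta_*) = -c$. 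The strategy is to decompose $[Gq + H] = [Gq] + [H]$ in the additive group $K[q]/\wp(K[q])$, determine each summand's conductor, and combine via the additivity implicit in Lemmas~\ref{raylem1} and~\ref{raylem2}.

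For $[Gq]$: since $K[q]/K$ has ramification index two, $v_{K[q]}(G) = -2g$; also $v_{K[q]}(q) = -f$ from the proof of Proposition~\ref{ramprop}. Hence $v_{K[q]}(Gq) = -(2g + f)$ is odd, so Proposition~\ref{ramprop} (applied to the complete discrete valuation field $K[q]$) gives the conductor of $[Gq]$ as $2g + f$. For $[H]$: since $H \in K$, this class corresponds to the compositum $K[q] \cdot K[r_H]$, where $r_H^2 + r_H = H$. When $h \le 0$, one has $v_{K[q]}(H) \ge 0$, so $[H]$ contributes at most an unramified factor of conductor $0$. When $h > 0$ (necessarily odd by Definition~\ref{odddef}(4)), Proposition~\ref{ramprop} assigns $K[r_H]/K$ conductor $h$, and I verify that the third quadratic subfield $K[q+r_H]$ of $K[q]\cdot K[r_H]$ (given by $F+H$) has odd negative valuation in $K$ by case analysis on $h$ versus $f$, so that $K[q, r_H]/K$ is biquadratic and totally ramified. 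Lemmas~\ref{raylem1} and~\ref{raylem2} then give the conductor of $[H]$ in $K[q]/\wp(K[q])$ as $h$ if $h < f$, as $2h - f$ if $h > f$, and as at most $f$ if $h = f$.

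Finally, I apply Lemmas~\ref{raylem1} and~\ref{raylem2} once more to the biquadratic totally ramified extension of $K[q]$ whose three quadratic subfields correspond to the classes $[Gq]$, $[H]$, and their sum $[Gq + H]$: the conductor of the sum equals the maximum of the two summand conductors when they differ, and is at most their common value when equal. Observing that $h$, when positive, is odd while $f + g$ is even (so $h \ne f + g$), a short case-by-case verification confirms the formula $2\max\{f+g,\, h\} - f$. The principal obstacle is verifying total ramification of the auxiliary biquadratic composita---a prerequisite for Lemmas~\ref{raylem1} and~\ref{raylem2}---which reduces to checking that $F + H$ and similar sums have odd-degree leading terms in standard form; these verifications are straightforward reductions to the odd-form hypotheses of Definition~\ref{odddef}.
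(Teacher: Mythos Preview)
Your approach is essentially the paper's: decompose $[Gq+H] = [Gq] + [H]$ via an auxiliary Artin--Schreier root of $H$, compute the two conductors over $K[q]$, and combine using Lemmas~\ref{raylem1} and~\ref{raylem2}. The paper streamlines by treating the range $h \le f$ with a one-line valuation computation (there $v_{K[q]}(Gq+H) = -(2g+f)$ is odd, so Proposition~\ref{ramprop} applies directly to $Gq+H$), and reserves the biquadratic decomposition for the case $h > f$ only.

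One point in your write-up needs correction. Your claim that total ramification of the auxiliary composita ``reduces to the odd-form hypotheses of Definition~\ref{odddef}'' is not right: that definition constrains $F$, $G$, $F+G$, and $H$, but says nothing about $F+H$. When $h = f$ and the leading coefficients of $F$ and $H$ coincide, $\deg_{t^{-1}}(F+H)$ can drop and become even, so $K[q,r_H]/K$ need not be totally ramified and Lemma~\ref{raylem2} does not apply. (The paper's case division sidesteps this particular instance by handling all of $h \le f$ directly.) The same issue afflicts the second compositum $K[q,r,r_H]/K[q]$, which need not be totally ramified even when $h > f$; the paper meets this very difficulty and resolves it in the remark following the proof by observing that $K[q,r]/K[q]$ is itself totally ramified (since $L/K$ is) and that conductors of totally ramified extensions are invariant under unramified base change of the ground field, so one may first enlarge the residue field. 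That base-change device, rather than a parity check on $F+H$, is what actually closes the gap.
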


\begin{proof}
Since the degrees in $t^{-1}$ of $F$, of $G$ and of $F+G$ are all both odd and positive by hypothesis, the first three claims follow immediately by Proposition~\ref{ramprop}.
For the fourth claim, note that $v_q(F) = -2f$ since $K[q]$ is a totally ramified extension of $K$. Thus $v_q(q) = -f$, and $v_q(Gq) = -(2g+f)$.

Suppose $h\le f$. Then $v_q(H) = -2h \ge -2f > -(2g+f) = v_q(Gq)$ since $h \le f \le g$. Hence $v_q(Gq+H) = -(2g+f)$; since $2g+f$ is odd, it follows by Proposition~\ref{ramprop} that the conductor of $K[q,r]$ over $K[q]$ is $2g+f = 2\max\{f+g,h\} - f$.

Now suppose $f < h$. Let $u\in K^{\text{alg}}$ such that $u^2 + u = H$, let $c_u$ denote the conductor of $K[q,u]$ over $K[q]$, and let $C_u$ denote the conductor of $K[q,r+u]$ over $K[q]$.
Since $v_q(Gq+H) = -(2g+f)$, and $2g+f$ is odd, $C_u = 2g + f$ by Proposition~\ref{ramprop}.
Similarly, the conductor of $K[u]$ over $K$ is $h$.
Thus $c_u = 2h-f$ by Lemma~\ref{raylem1} applied to the extensions $K[q]|K$ and $K[u]|K$. Since $f$, $g$ and $h$ are all odd, $2g+f - (2h-f) = 2(f+g-h) \neq 0$; hence $C_u = 2g+f \neq 2h- f = c_u$.
Thus the conductor of $K[q,r]$ over $K[q]$ is $\max\{2g+f, 2h-f\} = 2\max\{f+g,h\} - f$ by Lemma~\ref{raylem1} applied to the extensions $K[q, r+u]|K[q]$ and $K[q, r]|K[q]$.
\end{proof}

\begin{figure}
\[\xymatrix{K[q,r] \ar@{-}[dr]_{2\max\{f+g, h\} - f} & K[q, r+u] \ar@{-}[d]_{C_u} & K[q,u] \ar@{-}[dl]_{c_u} \ar@{-}[d] \ar@{-}[dr] & \\
& K[q] \ar@{-}[dr]_{f} & K[q+u] \ar@{-}[d] & 
K[u] \ar@{-}[dl]^{h} \\
& & K & }\]
\caption{}
\label{condfig}
\end{figure}

\begin{remark} The situation described in the final paragraph of the proof of Lemma~\ref{condlem} may be visualized as in Figure~\ref{condfig}.
\end{remark}

\begin{remark} In the final paragraph of the proof of Lemma~\ref{condlem}, we tacitly assumed that $K[q,r,u]$ was totally ramified over $K$. However, since $K[q,r]$ is totally ramified over $K[q]$, and the conductor of a totally ramified extension of a complete discrete valuation field of characteristic two is invariant under base change, the lemma holds as stated.
\end{remark}

\begin{remark}
So long as $K[q,r]$ is totally ramified over $K$, the fourth claim of Lemma~\ref{condlem} holds even if $d$ is not both odd and positive. In particular, Lemma~\ref{condlem} has the following corollary, which also (essentially) follows from a known result (see, \emph{e.g.},~\cite{Gar99}) on ramification breaks of Witt vectors.
\end{remark}

\begin{cor}\label{condcor}
Suppose that $F = G$ \emph{(}so that $K[q,r]$ is a $\mathbb{Z}/4\mathbb{Z}$-extension of $K$ by Proposition~\ref{classprop}\emph{)}. Then the conductor of $K[q,r]$ over $K[q]$ is $2\max\{2f, h\} - f$.
\end{cor}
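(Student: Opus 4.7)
The plan is to adapt the proof of the fourth claim of Lemma~\ref{condlem} to the degenerate case $F = G$, in which $d = \deg_{t^{-1}}(F + G)$ fails to be positive and odd, so the hypotheses of Lemma~\ref{condlem} are not formally met. The preceding remark, however, observes that its conclusion should remain valid so long as $K[q,r]$ is totally ramified over $K$, which we assume. Since $F = G$, we have $g = f$ and $s \in K[q]$, so that $K[q,r]$ is the cyclic degree four extension of $K$ described in Proposition~\ref{classprop}(3); it remains only to compute the conductor of $K[q,r]$ over its unique quadratic subfield $K[q]$.

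I would split the argument into the subcases $h \le f$ and $f < h$, mirroring the proof of Lemma~\ref{condlem}. In the first case, a direct valuation calculation yields $v_q(Gq + H) = -3f$ (using $v_q(H) \ge -2f$ and $v_q(Gq) = -3f$), which is odd; Proposition~\ref{ramprop} then gives the conductor $3f = 2(2f) - f = 2\max\{2f, h\} - f$ at once.

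In the second case, with $f < h$, I would introduce an auxiliary element $u \in K^{\text{alg}}$ satisfying $u^2 + u = H$, so that $K[u]|K$ has conductor $h$ by Proposition~\ref{ramprop}. An application of Lemma~\ref{raylem1} to the extensions $K[q]|K$ and $K[u]|K$ then shows that $K[q,u]$ has conductor $c_u = 2h - f$ over $K[q]$, while the identity $(r + u)^2 + (r + u) = Gq$, whose right-hand side has odd $v_q$-valuation $-3f$, shows via Proposition~\ref{ramprop} that $K[q, r+u]$ has conductor $C_u = 3f$ over $K[q]$. Since $K[q, u]$ and $K[q, r+u]$ are distinct quadratic extensions of $K[q]$ with compositum $K[q, r, u]$, and $K[q, r]$ is the third quadratic subfield of this compositum, a final application of Lemma~\ref{raylem1} gives the conductor of $K[q,r]$ over $K[q]$ as $\max\{c_u, C_u\} = \max\{2h - f, 3f\} = 2\max\{2f, h\} - f$, as claimed.

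The main technical point is verifying the hypothesis $c_u \ne C_u$ of Lemma~\ref{raylem1} in the second case: $2h - f \ne 3f$ if and only if $h \ne 2f$, which holds because the condition that $h$ not be both positive and even (retained from Definition~\ref{odddef}(4)) forces $h$ to be odd here (as $h > f \ge 1$), while $2f$ is even. A subsidiary concern, that the various composita are totally ramified over $K[q]$, follows immediately from the hypothesis that $K[q,r]$ is totally ramified over $K$.
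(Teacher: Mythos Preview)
Your proposal is correct and follows the same route as the paper: the corollary is obtained by specializing the fourth claim of Lemma~\ref{condlem} to $g=f$, running through the same two subcases $h\le f$ and $f<h$ with the auxiliary element $u$.

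One small quibble: your final sentence asserts that total ramification of the composita over $K[q]$ ``follows immediately from the hypothesis that $K[q,r]$ is totally ramified over $K$.'' It does not---$K[q,r,u]$ need not be totally ramified over $K$ even when $K[q,r]$ is. The paper itself addresses this point in the Remark preceding the corollary: the conductor of $K[q,r]$ over $K[q]$ is invariant under base change, so one may enlarge the residue field to force $K[q,r,u]$ to be totally ramified and then carry out the computation there. You should invoke this base-change argument rather than claim the implication is immediate.
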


\begin{prop}\label{breakprop}
Let $c_q$, $c_s$ and $c_{q+s}$ denote the conductors over $K$ of $K[q], K[s]$ and $K[q+s]$, respectively, and let $c_r$ denote the conductor of $K[q,r]$ over $K[q]$. Then the lower ramification breaks of $L$ over $K$ are $\ell_1 = \min\{c_{q+s},c_q\}$, $\ell_2 = 2c_s - \min\{c_{q+s},c_q\}$ and $\ell_3 = 2c_q + 2c_r - 2c_s - \min\{c_{q+s},c_q\}$, and the upper ramification breaks of $L$ over $K$ are $u_1 = \min\{c_{q+s},c_q\}, u_2 = c_s$ and $u_3 = (c_q + c_r)/2$.
\end{prop}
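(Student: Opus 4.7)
The plan is to first determine the upper ramification breaks $u_1 \le u_2 \le u_3$ of $L/K$, and then obtain the lower breaks by inverting Herbrand's formula on $D_4$. The key structural observation is that $D_4$ has exactly three index-$2$ normal subgroups, all of which contain its center $Z$ of order $2$, so the ramification filtration of $G = D_4$ in upper numbering must take the form
\[
G \;\supseteq\; N \;\supseteq\; Z \;\supseteq\; 1
\]
for some index-$2$ subgroup $N$, with jumps (allowed to coincide) at $u_1, u_2, u_3$.

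To identify $u_1$ and $u_2$, I would apply Proposition~IV.14 in~\cite{LocalFields} (compatibility of the upper numbering with quotients) to the three index-$2$ quotients of $G$, whose corresponding fixed fields are the degree-$2$ subextensions $K[q], K[s], K[q+s]$ with conductors $c_q = f$, $c_s = g$, and $c_{q+s} = d$ by Lemma~\ref{condlem}. Each such conductor is therefore an upper ramification break of $L/K$; since two distinct index-$2$ subgroups of $D_4$ always intersect in $Z$, the smallest of $\{c_q, c_s, c_{q+s}\}$ equals $u_1$, while the remaining two (necessarily equal) both equal $u_2$. Using the hypothesis $f \le g$: if $f < g$, then $\deg_{t^{-1}}(F+G) = g$, so $d = g$ and $c_q < c_s = c_{q+s}$; if $f = g$, then Lemma~\ref{raylem2} applied to $K[q,s]/K$ yields $d \le f$, so $c_{q+s} \le c_q = c_s$. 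In either subcase, $u_1 = \min\{c_{q+s}, c_q\}$ and $u_2 = c_s$.

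For $u_3$, I would pass to the biquadratic extension $L/K[q]$, whose Galois group $H_q$ is a Klein-four group (since $L = K[q][r,s]$ is generated over $K[q]$ by the two Artin--Schreier elements $r$ and $s$). Subgroup compatibility of lower ramification gives $(H_q)_i = H_q \cap G_i$; since any index-$2$ subgroup of $D_4$ distinct from $H_q$ meets $H_q$ in exactly $Z$, the lower filtration of $L/K[q]$ always has the shape $H_q \supseteq Z \supseteq 1$ with middle group $Z$, regardless of whether $N$ equals $H_q$. Of the three degree-$2$ subfields $K[q,s], K[q,r], K[q,r+s]$ of $L/K[q]$, only $K[q,s]$ is fixed by $Z$, so applying Proposition~IV.14 to $H_q$ shows that the conductor of $K[q,s]/K[q]$ is the smaller upper break $u'_1$ of $L/K[q]$, while $c_r$ and the remaining conductor both equal the larger break $u'_2$. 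The biquadratic Herbrand relation $u'_2 = (\ell'_1 + \ell_3)/2$ combined with $c_r = u'_2$ gives $\ell_3 = 2c_r - \ell'_1$, where $\ell'_1$ is the smaller lower break of $L/K[q]$; substituting into $u_3 = (\ell_1+\ell_2)/2 + (\ell_3-\ell_2)/4$ and using $\ell_1 = u_1$, $\ell_2 = 2u_2 - u_1$ yields $u_3 = (c_q + c_r)/2$ in both subcases.

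Finally, inverting Herbrand on $D_4$ gives $\ell_1 = u_1$, $\ell_2 = 2u_2 - u_1$, and $\ell_3 = 4u_3 - 2u_2 - u_1$; substituting the computed formulas for $u_1, u_2, u_3$ yields the stated expressions for the lower breaks. The main obstacle I anticipate is the biquadratic analysis in Step~3: tracking the lower filtration of $L/K[q]$ under subgroup restriction and correctly identifying the middle subgroup as $Z$---this is what pins $c_r$ down as the larger biquadratic upper break $u'_2$ rather than the smaller $u'_1$. Once these structural points are in place, the remainder reduces to bookkeeping with Herbrand's formula.
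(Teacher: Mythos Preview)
Your approach is correct and follows the same overall strategy as the paper: determine $u_1, u_2$ from the conductors of the three quadratic subextensions via compatibility of upper numbering with quotients, analyze the biquadratic extension $L/K[q]$ to pin down $u_3$, and then invert Herbrand for the lower breaks.

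The one substantive difference lies in how you justify that $c_r$ is the \emph{larger} upper break $u'_2$ of $L/K[q]$. The paper computes the conductor $C_q$ of $K[q,s]/K[q]$ explicitly (as $2c_s - 2c_q + \min\{c_{q+s},c_q\}$), invokes the bound $c_r \ge 2c_s + c_q$ from Lemma~\ref{condlem} to conclude $C_q < c_r$, and then applies Lemma~\ref{raylem1} to get $\ell_3 = c_L = 2c_r - C_q$. You instead argue structurally: since every $G_i$ is normal in $G$ and $Z$ is the unique normal order-$2$ subgroup, the restricted filtration $(H_q)_i = H_q \cap G_i$ can only take the values $H_q$, $Z$, $1$, so the middle group (when present) is forced to be $Z$; hence $c_r = u'_2$ automatically. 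This is a genuine simplification---you never need the inequality $c_r > C_q$ or the explicit bound from Lemma~\ref{condlem}. One point you use implicitly and should state is that $\ell'_2 = \ell_3$: this holds because $Z \subseteq G_i$ whenever $G_i \neq 1$ (any nontrivial normal subgroup of a $p$-group meets the center), so $(H_q)_i \neq 1$ iff $G_i \neq 1$. With that in hand, your Herbrand bookkeeping in the two subcases $f<g$ (where $N = H_q$, $\ell'_1 = \ell_2$) and $f=g$ (where $\ell'_1 = \ell_1$) goes through exactly as you indicate.
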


\begin{proof}
Let $c_L$ denote the conductor of $L$ over $K[q,s]$, let $C_q$ denote the conductor of $K[q,s]$ over $K[q]$, and let $\Gamma = \text{Gal}(L|K)$. By Propostion~\ref{d4struc}, $K[q,s]$ is the unique normal degree four subfield of $L$; thus, $\text{Gal}(L|K[q,s])$ is the only normal subgroup of $\Gamma$ of order two. By  Proposition IV.1 in ~\cite{LocalFields}, $\Gamma_i$ is a normal subgroup of $\Gamma$ for all $i$. In light of Proposition IV.2 in ~\cite{LocalFields}, it follows that $\ell_3 = c_L$. Similarly, by Proposition IV.14 in ~\cite{LocalFields}, $u_1$ and $u_2$ equal the first and second upper ramification breaks of $K[q,s]$ over $K$, respectively.

Recall that $f \le g$ by hypothesis; thus $c_q = f \le g = c_s$ by Lemma~\ref{condlem}.

Suppose $c_q < c_s$. Then $c_{q+s} = c_s$, and, as in Lemma~\ref{raylem1}, the first and second upper ramification breaks of $K[q,s]$ over $K$ are $c_q$ and $c_s$, respectively. Hence $u_1 = c_q = \min\{c_{q+s}, c_q\}$ and $u_2 = c_s$. Moreover, $C_q = 2c_s - c_q = 2c_s - 2c_q + \min\{c_{q+s}, c_q\}$ by Lemma~\ref{raylem1}.

Now suppose $c_q = c_s$. Then $c_{q+s} \le c_q$, and
\[C_q = c_{q+s} = \min\{c_{q+s}, c_q\} = 2c_s - 2c_q + \min\{c_{q+s}, c_q\},\]
the inequality and first equality holding by Lemma~\ref{raylem2}. If $c_{q+s} < c_q$, then $u_1 = c_{q+s} = \min\{c_{q+s}, c_q\}$ and $u_2 = c_s$ as in Lemma~\ref{raylem1}. If $c_{q+s} = c_q$, then $(\text{Gal}(K[q,s]|K))^i = \text{Gal}(K[q,s]|K)$ for all $i \le c_q$ and
\[(\text{Gal}(K[q,s]|K))^i \subseteq \text{Gal}(K[q,s]|K[q]) \cap \text{Gal}(K[q,s]|K[s]) = 1\] for all $i > c_q$. Hence $u_1 = u_2 = \min\{c_{q+s}, c_q\} =  c_s$.

Since $u_1 = \min\{c_{q+s}, c_q\}$, and $u_2 = c_s$, it follows by Herbrand's Formula that $\ell_1 = u_1 = \min\{c_{q+s}, c_q\}$ and that $\ell_2 = 2u_2 - u_1 = 2c_s - \min\{c_{q+s}, c_q\}$.
To compute $\ell_3$ (and $u_3$), note that, in either of the cases above, 
\[C_q = 2c_s - 2c_q + \min\{c_{q+s}, c_q\} \le 2c_s - c_q.\] Moreover, by Lemma~\ref{condlem}, $c_r \ge 2g+f = 2c_s + c_q > C_q$. Thus
\[\ell_3 = c_L = 2c_r - C_q = 2c_q + 2c_r - 2c_s - \min\{c_{q+s}, c_q\}\]
by Lemma~\ref{raylem1}. Hence
\begin{align*}
u_3 &= \ell_1 + (\ell_2 - \ell_1)/2 + (\ell_3 - \ell_2)/4 = \ell_3/4 + \ell_2/4 + \ell_1/2 \\ &= \left(2c_q + 2c_r - 2c_s - \min\{c_{q+s}, c_q\}\right)/4 \\ & + \left(2c_s - \min\{c_{q+s}, c_q\}\right)/4 + \min\{c_{q+s}, c_q\}/2 \\ &= (c_q + c_r)/2,
\end{align*}
the first equality holding by Herbrand's Formula.  
\end{proof}

Applying Lemma~\ref{condlem} to Proposition~\ref{breakprop} yields the following corollary.

\begin{cor}\label{breakcor} The lower ramification breaks of $L$ over $K$ are $\ell_1 = \min\{d,f\}$, $\ell_2 = 2g - \min\{d,f\}$ and $\ell_3 = 4\max\{f+g,h\} - 2g - \min\{d,f\}$, and the upper ramification breaks of $L$ over $K$ are $u_1 = \min\{d,f\}, u_2 = g$ and $u_3 = \max\{f+g,h\}$.
\end{cor}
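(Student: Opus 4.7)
The plan is to derive the corollary by direct substitution, plugging the conductor formulas from Lemma~\ref{condlem} into the ramification-break formulas from Proposition~\ref{breakprop}. Specifically, Lemma~\ref{condlem} yields $c_q = f$, $c_s = g$, $c_{q+s} = d$, and $c_r = 2\max\{f+g,h\} - f$, while Proposition~\ref{breakprop} expresses the six ramification breaks as elementary functions of these four conductors. All four of the required hypotheses in Definition~\ref{odddef} have already been used to invoke these two results (in particular to ensure that $d$, $f$, $g$ are odd and positive so that Proposition~\ref{ramprop} applies, and that the $H$ contribution never overwhelms the $Gq$ term in the computation of $c_r$), so what remains is purely algebraic.

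For the upper breaks, $u_1 = \min\{c_{q+s},c_q\} = \min\{d,f\}$ and $u_2 = c_s = g$ are immediate, while the $f$ terms cancel in
\[
u_3 \;=\; \frac{c_q + c_r}{2} \;=\; \frac{f + \bigl(2\max\{f+g,h\} - f\bigr)}{2} \;=\; \max\{f+g,h\}.
\]
For the lower breaks, $\ell_1 = \min\{d,f\}$ and $\ell_2 = 2g - \min\{d,f\}$ are likewise immediate. For $\ell_3$ one uses the identity
\[
2c_q + 2c_r \;=\; 2f + 4\max\{f+g,h\} - 2f \;=\; 4\max\{f+g,h\},
\]
and then substitutes $c_s = g$ and $\min\{c_{q+s},c_q\} = \min\{d,f\}$ into the remaining terms of the formula for $\ell_3$ in Proposition~\ref{breakprop}, obtaining the claimed expression.

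There is no substantive obstacle: the entire content of the corollary is contained in combining the two preceding results, and the hardest step is merely observing the $\pm f$ cancellation in $c_q + c_r$ and in $2c_q + 2c_r$. The value of the statement is that it repackages the ramification breaks directly in terms of the four Artin--Schreier degrees $d, f, g, h$, which is the form needed for the induction on ramification data in the sequel.
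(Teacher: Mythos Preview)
Your proposal is correct and takes exactly the same approach as the paper, which simply says ``Applying Lemma~\ref{condlem} to Proposition~\ref{breakprop} yields the following corollary'' without further detail. Your explicit verification of the cancellations is a faithful expansion of that one-line remark.
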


\subsection{Characterization of Sequences of Ramification Breaks}\label{charsubsec}

In this subsection, we once again suppose that $k$ is algebraically closed. By Corollary~\ref{stdformcor} and Remark~\ref{totramrmk}, respectively, it follows that every $D_4$-extension of $K$ is both a $D_4$-standard form extension of $K$ and a totally ramified extension of $K$. Moreover, by Proposition~\ref{triplesprop}, every $D_4$-extension of $K$ has a `standard form' triple $(F',G',H')$ satisfying the additional condition $\deg_{t^{-1}} F' \le \deg_{t^{-1}} G'$.

We define the sequence of ramification groups of $L$ over $K$ to be a \emph{Type I} sequence if the sequence's second element is isomorphic to
$\mathbb{Z}/2\mathbb{Z} \times \mathbb{Z}/2\mathbb{Z}$, to be a \emph{Type II} sequence if the sequence's second element is isomorphic to
$\mathbb{Z}/4\mathbb{Z}$, and as a \emph{Type III} sequence if the sequence's second element is isomorphic to
$\mathbb{Z}/2\mathbb{Z}$. Note that in all cases, the second ramification break is strictly smaller than the third; thus the sequence's third element is always isomorphic to $\mathbb{Z}/2\mathbb{Z}$.

The type of the sequence of ramification groups of the extension $L$ over $K$ informs, to a large extent, which of the equicharacteristic deformations in Section~\ref{defsec} may and will be applied to the extension $L$. Moreover, the type of an extension's sequence of ramification groups affects the possible sequences of lower and of upper ramification breaks of that extension significantly. In this subsection, we consider (in the case where $k$ is algebraically closed) the relation between the type of an extension's sequence of ramification groups and the sequences of lower and of upper ramification breaks of that sequence exhaustively.


\begin{prop}\label{upperprop}
Let $(\alpha, \beta, \gamma) \in (\mathbb{Z}^+)^3$. Then $(\alpha, \beta, \gamma)$ is the sequence of upper ramification breaks for a $D_4$-extension of $K$ if and only if $\alpha$ is odd, $\alpha \le \beta$, $\beta$ is odd, $\gamma \ge \alpha + \beta$, and $\gamma$ is odd if $\gamma \notin \{\alpha + \beta, 2\beta\}$. Moreover, if $M$ is a $D_4$-extension of $K$ with sequence of upper ramification breaks $(\alpha, \beta, \gamma)$, then 
\begin{enumerate}[label=\emph{(\arabic*)}]
\item $M$ has a Type I sequence of ramification groups if $\gamma < 2\beta$.
\item $M$ has a Type II sequence of ramification groups if $\alpha < \beta$ and $\gamma = 2\beta$.
\item $M$ has a Type I or a Type II sequence of ramification groups if $\alpha < \beta$ and $\gamma > 2\beta$.
\item $M$ has a Type III sequence of ramification groups if and only if $\alpha = \beta$.
\end{enumerate}
\end{prop}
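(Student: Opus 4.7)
My plan is to combine the standard-form parametrization of $D_4$-extensions (Corollary~\ref{stdformcor} and Proposition~\ref{triplesprop}) with the ramification-break formulas of Corollary~\ref{breakcor}, and then to distinguish the three types by identifying which index-two subgroup of $D_4$ appears as the middle ramification group, via the subfield analysis of Proposition~\ref{d4struc}. Given a $D_4$-extension $M$ of $K$ with upper breaks $(\alpha, \beta, \gamma)$, I present $M$ as the Galois closure of $K[q,r]$ for a standard-form triple $(F, G, H)$, and, by Proposition~\ref{triplesprop}, arrange $f := \deg_{t^{-1}} F \le \deg_{t^{-1}} G =: g$. Writing $d = \deg_{t^{-1}}(F+G)$ and $h = \deg_{t^{-1}} H$, Corollary~\ref{breakcor} delivers $\alpha = \min\{d,f\}$, $\beta = g$, and $\gamma = \max\{f+g, h\}$. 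Since $F$, $G$, and $F+G$ are nonzero standard-form elements, $f$, $g$, and $d$ are positive and odd, forcing $\alpha$ odd, $\beta$ odd, and $\alpha \le \beta$; moreover $\gamma \ge f+g \ge \alpha + \beta$. A short case analysis separating $\gamma = f+g$ (which is even and collapses to $\alpha + \beta$ or to $2\beta$ according as $f < g$ or $f = g$) from $\gamma = h > f+g$ (which is odd by the standard form of $H$) then establishes the parity constraint on $\gamma$.

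For the type classification, I use Proposition~\ref{d4struc} to identify the three index-two subgroups of $\mathrm{Gal}(M|K) \cong D_4$: the subgroups $\mathrm{Gal}(M|K[q])$ and $\mathrm{Gal}(M|K[s])$ each contain three distinct order-two subgroups (the center together with the two non-central involutions fixing the non-normal degree-four subfields listed in Proposition~\ref{d4struc}) and so are the two Klein-four subgroups, while $\mathrm{Gal}(M|K[q+s])$ is forced to be the remaining cyclic subgroup of order four. When $u_1 < u_2$, the middle order-four ramification group fixes the degree-two subfield of smallest conductor; by Lemma~\ref{condlem} the conductors of $K[q]$, $K[s]$, $K[q+s]$ are $f$, $g$, $d$, and so the middle group is a $V_4$ exactly when $f < g$ (Type~I, with $\alpha = f$) and is cyclic exactly when $f = g > d$ (Type~II, with $\alpha = d$). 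When $u_1 = u_2$, the filtration skips order four, and Proposition~IV.1 of~\cite{LocalFields} forces the intermediate order-two group to be the unique normal order-two subgroup, namely the center (Type~III); this regime is exactly $f = g = d$, i.e., $\alpha = \beta$. Matching these three cases against the formulas for $(\alpha, \beta, \gamma)$ then yields claims (1)--(4): $\gamma < 2\beta$ forces $f < g$; $\gamma = 2\beta$ with $\alpha < \beta$ forces $f = g > d$ (as the odd integer $h$ cannot equal the even number $2\beta$); $\alpha = \beta$ is equivalent to $f = g = d$; and $\alpha < \beta$ with $\gamma > 2\beta$ is compatible with both Type~I and Type~II.

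For the converse I realize each permitted $(\alpha, \beta, \gamma)$ by an explicit triple: when $\alpha = \beta$, I choose $c \in k \setminus \{0, 1\}$ (possible since $k$ is algebraically closed) and take $F = t^{-\beta}$, $G = c\,t^{-\beta}$; when $\alpha < \beta$, I take $F = t^{-\alpha}$, $G = t^{-\beta}$ to produce a Type~I extension or $F = t^{-\beta}$, $G = t^{-\beta} + t^{-\alpha}$ to produce a Type~II extension. In each case $H$ is chosen to be $0$ when $\gamma \in \{\alpha+\beta, 2\beta\}$ and $t^{-\gamma}$ otherwise. Corollary~\ref{breakcor} then verifies that the upper breaks are $(\alpha, \beta, \gamma)$, and Proposition~\ref{classprop} together with Lemma~\ref{kerlem1} confirms that the Galois group is indeed $D_4$. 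In my view the main obstacle is the group-theoretic identification of which index-two subgroup of $D_4$ corresponds to each of the three degree-two subfields $K[q]$, $K[s]$, $K[q+s]$; this hinges on the careful enumeration of non-normal degree-four subfields in Proposition~\ref{d4struc} and is what ultimately drives the Type~I versus Type~II dichotomy, with the remainder of the argument reducing to bookkeeping organized by the parity of $\gamma$ and its size relative to $2\beta$.
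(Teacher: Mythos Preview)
Your proof is correct and follows essentially the same approach as the paper's: both parametrize $D_4$-extensions by standard-form triples $(F,G,H)$ with $f\le g$, invoke Corollary~\ref{breakcor} to express $(u_1,u_2,u_3)=(\min\{d,f\},g,\max\{f+g,h\})$, and then split into the three cases $f<g=d$, $d<f=g$, $d=f=g$ to read off the type and the constraints on the upper breaks. Your argument is slightly more explicit in two places---you justify via the subfield enumeration of Proposition~\ref{d4struc} why $\mathrm{Gal}(M|K[q])$ and $\mathrm{Gal}(M|K[s])$ are the two Klein subgroups and $\mathrm{Gal}(M|K[q+s])$ the cyclic one, and you write down concrete triples for the converse---but the underlying strategy is identical.
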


\begin{proof}
Corollary~\ref{triplescor} specifies a surjection $\Phi$ from the set $\mathcal{D}$ of triples $(\phi, \gamma, \eta)$ of standard form elements of $K$ such that $\phi, \gamma$ and $0$ are pairwise distinct to the set of $D_4$-extensions of $K$. Thus $(\alpha, \beta, \gamma)$ is the sequence of upper ramification breaks for a $D_4$-extension of $K$ if and only if there is a triple in $\mathcal{D}$ whose image under $\Phi$ has $(\alpha, \beta, \gamma)$ as its sequence of upper ramification breaks.

Let $(F, G, H) \in \mathcal{D}$, let $f = \deg_{t^{-1}}(F)$, $g = \deg_{t^{-1}}(G)$, $h = \deg_{t^{-1}}(H)$, and $d = \deg_{t^{-1}}(F+G)$. By Proposition~\ref{triplesprop}, we may and do assume, without loss of generality, that $f\le g$. Then the sequence of upper ramification breaks of $L = \Phi((F,G,H))$ is $(u_1, u_2, u_3) = (\min\{d,f\}, g, \max\{f+g, h\})$ by Corollary~\ref{breakcor}.
Moreover, the only restrictions on $f$, $g$, $h$ and $d$ 
general to all such triples
are that $f$, $d$ and $g$ must all be both odd and positive, that $h$ must be either both odd and positive or equal to zero, that $d = g$ if $f < g$, and that $d \le f$ if $f = g$. From these restrictions it follows that $u_1$ must be odd, that $u_1 \le u_2$, that $u_2$ must be odd, and that $u_3 \ge u_1 + u_2$.

Suppose firstly that $f < g = d$. Then $u_1 = f < g = u_2$. Hence the second element of the sequence of ramification groups of $L$ over $K$ is $\text{Gal}(L|K[q]) \cong \mathbb{Z}/2\mathbb{Z} \times \mathbb{Z}/2\mathbb{Z}$; \emph{i.e.}, $L$ has a Type I sequence of ramification groups. Moreover, $u_3 = \max\{u_1 + u_2, h\}$. Thus the additional restrictions on $(u_1, u_2, u_3)$ in this case are precisely that $u_1 < u_2$ and that $u_3$ must be odd if $u_3 \neq u_1 + u_2$.

Suppose secondly that $d < f = g$. Then $u_1 = d < g = u_2$. Hence the second element of the sequence of ramification groups of $L$ over $K$ is $\text{Gal}(L|K[q+s]) \cong \mathbb{Z}/4\mathbb{Z}$; \emph{i.e.}, $L$ has a Type II sequence of ramification groups. Moreover, $u_3 = \max\{2u_2, h\}$. Thus the additional restrictions on $(u_1, u_2, u_3)$ in this case are precisely that $u_1 < u_2$ , that $u_3 \ge 2u_2$, and that $u_3$ must be odd if $u_3 \neq 2u_2$.

Suppose thirdly that $d = f = g$. Then $u_1 = g = u_2$. Hence the second element of the sequence of ramification groups of $L$ over $K$ is $\text{Gal}(L|K[q,s]) \cong \mathbb{Z}/2\mathbb{Z}$; \emph{i.e.}, $L$ has a Type III sequence of ramification groups. Moreover, $u_3 = \max\{2u_2, h\}$. Thus the additional restrictions on $(u_1, u_2, u_3)$ in this case are precisely that $u_1 = u_2$, that $u_3 \ge 2u_2$, and that $u_3$ must be odd if $u_3 \neq 2u_2$.

Note that in all cases, $u_3$ must be odd if $u_3 \notin \{u_1 + u_2, 2u_2\}$, and that this is the only additional general restriction to those listed above. The unnumbered claim of the proposition now follows. Moreover, statement (4) holds since $u_1 < u_2$ in the first and second cases and $u_1 = u_2$ in the third case. Since $u_3 \ge 2u_2$ in the second case and $2u_2 > u_3 \ge u_1 + u_2$ implies that $u_1 < u_2$, statement (1) holds as well. Finally, statements (2) and (3) both hold since $u_3$ must be odd in the first case if $u_3 > u_1 + u_2$, and since there is no restriction in either the first or the second case on $u_3$ if $u_3 > 2u_2 > u_1 + u_2$, save that in both cases $u_3$ must be odd.
\end{proof}

The following proposition is the precise analogue to Proposition~\ref{upperprop} concerning the lower ramification breaks of $D_4$; accordingly, we omit its proof.

\begin{prop}
Let $(a, b, c) \in (\mathbb{Z}^+)^3$. Then $(a,b,c)$ is the sequence of lower ramification breaks for a $D_4$-extension of $K$ if and only if $a$ is odd, $a \le b$, $a \equiv b \pmod 4$, $c \ge 4a + b$, and $b \equiv c \pmod 8$ if $c \notin \{4a + b, 2a + 3b\}$. Moreover, if $M$ is a $D_4$-extension of $K$ with sequence of lower ramification breaks $(a, b, c)$, then 
\begin{enumerate}[label=\emph{(\arabic*)}]
\item $M$ has a Type I sequence of ramification groups if $c < 2a + 3b$.
\item $M$ has a Type II sequence of ramification groups if $a < b$ and $c = 2a + 3b$.
\item $M$ has a Type I or a Type II sequence of ramification groups if $a < b$ and $c > 2a + 3b$.
\item $M$ has a Type III sequence of ramification groups if and only if $a = b$.
\end{enumerate}
\end{prop}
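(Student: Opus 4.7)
The plan is to mimic the proof of Proposition~\ref{upperprop} almost verbatim, replacing its use of the upper-break formulas from Corollary~\ref{breakcor} with the lower-break formulas from the same corollary. By Corollary~\ref{triplescor} together with Proposition~\ref{triplesprop}, every $D_4$-extension of $K$ arises as $\Phi((F,G,H))$ for some standard-form triple with $f := \deg_{t^{-1}}(F) \le g := \deg_{t^{-1}}(G)$; writing $h = \deg_{t^{-1}}(H)$ and $d = \deg_{t^{-1}}(F+G)$, Corollary~\ref{breakcor} gives
\[
(a,b,c) = \bigl(\min\{d,f\},\ 2g - \min\{d,f\},\ 4\max\{f+g,h\} - 2g - \min\{d,f\}\bigr),
\]
so it suffices to determine exactly which integer triples arise in this way.

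I would then split into the three cases used in the proof of Proposition~\ref{upperprop}: Type I ($f < g = d$), Type II ($d < f = g$), and Type III ($d = f = g$). The conditions ``$a$ odd,'' ``$a \le b$,'' and ``$c \ge 4a + b$'' are routine: $a = \min\{d,f\}$ is odd since $d$ and $f$ are, the inequality $a \le b$ reduces to $a \le g$ (which holds since $a \le f \le g$), and $c - (4a+b) = 4\bigl(\max\{f+g,h\} - g - a\bigr) \ge 0$ since $\max\{f+g,h\} \ge f + g \ge a + g$. The congruence $a \equiv b \pmod{4}$ follows from $b - a = 2(g-a)$ together with $g$ and $a$ both odd.

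The main obstacle, and what forces the case analysis, is the congruence $b \equiv c \pmod{8}$ whenever $c \notin \{4a+b,\ 2a+3b\}$. A direct computation gives $c - b = 4\bigl(\max\{f+g,h\} - g\bigr)$, so the condition mod $8$ is equivalent to $\max\{f+g,h\}$ being odd; meanwhile $c = 4a+b$ and $c = 2a+3b$ correspond respectively to $\max\{f+g,h\} = a + g$ and $\max\{f+g,h\} = 2g$. In Type I, $a + g = f + g$ is odd and $2g$ is even, so excluding $c = 4a+b$ forces $h > f + g$ and $c = 2a+3b$ is automatically excluded; either way $\max = h$ which, being nonzero, must be odd. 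In Type II, $c = 4a+b$ cannot occur (it would require $\max = d + g < 2g \le \max$), and excluding $c = 2a+3b$ forces $h > 2g$, whence $\max = h$ is odd. In Type III the two exceptional values coincide, and excluding them again forces $h > 2g$, hence $\max = h$ odd.

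Finally, statements (1)--(4) drop out by inspection: $c < 2a + 3b$ is equivalent to $\max\{f+g,h\} < 2g$, which requires $f + g < 2g$ and hence Type I; $a < b$ with $c = 2a + 3b$ means $\max = 2g$ (forcing $f = g$) together with $d < g$, which is exactly Type II; $a = b$ is equivalent to $\min\{d,f\} = g$, which forces $d = f = g$ and hence Type III; and the remaining case $a < b$ with $c > 2a + 3b$ forces $h > 2g$ odd but leaves both $f < g$ and $f = g > d$ open, giving both Type I and Type II as possibilities, as claimed in (3).
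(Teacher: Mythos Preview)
Your approach is exactly what the paper does: it states that the proposition is the precise analogue of Proposition~\ref{upperprop} for the lower breaks and omits the proof, so your plan of rerunning that argument with the lower-break formulas from Corollary~\ref{breakcor} is the intended one, and your case split into Types I, II, III and your verification of statements (1)--(4) are all correct.

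One small slip: in the Type~I paragraph you write ``$a+g = f+g$ is odd,'' but since $f$ and $g$ are both odd, $f+g$ is even. Fortunately your argument does not actually use this parity claim. What matters is that excluding $c = 4a+b$ forces $h > f+g$, hence $\max\{f+g,h\} = h$, and then $c = 2a+3b$ would require $h = 2g$, which is impossible since $h$ is odd (or zero) and $2g$ is even and positive. So the conclusion stands; just correct or delete the parity remark.
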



\section{Deformations in Characteristic Two}\label{defsec}

We are now ready to define the equicharacteristic deformations needed to prove that $D_4$ is indeed a local Oort group. Let $k$ be an algebraically closed field of characteristic $p > 0$, let $K = k((t))$ be the field of Laurent series over $k$, fix an algebraic closure $K^{\text{alg}}$ of $K$, and let $L\subseteq K^{\text{alg}}$ be a Galois extension of $K$ with cyclic-by-$p$ Galois group $\Gamma$. Furthermore, let $A = k[[t]]$, and let $B \cong k[[z]]$ be the integral closure of $A$ in $L$. Finally, let $\mathcal{A} = k[[\varpi, t]]$, let $\mathcal{K} = \text{Frac}(\mathcal{A})$, and let $\mathcal{S} = \mathcal{A}[\varpi^{-1}]$, where $\varpi$ is an element transcendental over $A$.

\begin{defn}\label{equichardef}
An \emph{equicharacteristic deformation} of the $\Gamma$-extension $B$ over $A$ is a $\Gamma$-extension $k[[\varpi, z]]$ over $\mathcal{A}$ such that the Galois action of $\Gamma$ on $k[[\varpi, z]]$ over $\mathcal{A}$ restricts to the Galois action of $\Gamma$ on the extension $B$ over $A$.
\end{defn}

\begin{remark}
The original extension $B$ over $A$ is the special fiber of the deformation, while the extension $k[[\varpi, z]][\varpi^{-1}]$ over $\mathcal{S}$ is the generic fiber.  One can think of $\mathcal{S}$ as the ring of functions on the open unit disc of $k((\varpi))$ about $t$.
\end{remark}

Since we shall only be concerned with the case in which $p=2$ and $\Gamma \cong D_4$, we assume that $p=2$ and that $\Gamma \cong D_4$ henceforth. We shall define the needed equicharacteristic deformations by deforming, in a few particular ways, a triple of standard form elements that generates the $D_4$-extension $L$ of $K$. Accordingly, let $F$, $G$ and $H$ be elements of $K = k((t))$ in standard form with respect to $t$, and let $q$, $r$, and $s$ be elements of $K^{\text{alg}}$ such that, firstly, $q^2 + q = F$, $r^2 + r = Gq + H$ and $s^2 + s = G$, and, secondly, $L$ is the Galois closure over $K$ of $K[q,r]$. The existence of these elements in guaranteed by Proposition~\ref{existprop}. Let $f$, $g$, $h$ and $d$ denote the degrees in $t^{-1}$ of $F$, $G$, $H$ and $F+G$, respectively. Moreover, for $1 \le i \le 3$, let $u_i$ denote the $i$th upper ramification break of $L$ over $K$, and let $\ell_i$ denote the $i$th lower ramification break of $L$ over $K$.


\subsection{Preparatory Lemmas}\label{subprep}

Let $\widetilde{F}, \widetilde{G}$, and $\widetilde{H} \in \mathcal{K}$, and let $\tilde{q}, \tilde{r}, \tilde{s} \in \mathcal{K}^{\text{alg}}$ such that $\tilde{q}^2 + \tilde{q} = \widetilde{F}$, $\tilde{r}^2 + \tilde{r} = \widetilde{G}\tilde{q} + \widetilde{H}$. Also, let $\varrho \in k[[\varpi]]$. Then $\widehat{\mathcal{S}}_{(t-\varrho)} \cong k((\varpi))[[t-\varrho]]$, and 
$\widehat{\mathcal{K}}_{(t-\varrho)} = \text{Frac}(\widehat{\mathcal{S}}_{(t-\varrho)}) \cong k((\varpi))((t-\varrho))$.

\begin{lem}\label{finextlem1}
There exist a finite extension $k((\alpha))\subseteq k((\varpi))^{\emph{alg}}$ of $k((\varpi))$ and elements $F', G' \in k((\alpha))((t-\varrho))$ in standard form with respect to $t-\varrho$ such that $[\widetilde{F}] = [F']$ and $[\widetilde{G}] = [G']$ over $k((\alpha))((t-\varrho))$.
\end{lem}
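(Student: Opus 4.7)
The plan is to view $\widetilde{F}$ and $\widetilde{G}$ as Laurent series in $t-\varrho$ over the residue field $k((\varpi))$ of the localization $\widehat{\mathcal{S}}_{(t-\varrho)}$, and then to adapt the proof of Proposition~\ref{stdformprop} by passing to a finite extension of $k((\varpi))$ large enough to supply the few ingredients that algebraic closedness would have provided there. Concretely, write
\[\widetilde{F} = \sum_{n \geq -N} a_n (t-\varrho)^n, \qquad a_n \in k((\varpi)),\]
and similarly for $\widetilde{G}$, and run through the three families of terms appearing in the proof of Proposition~\ref{stdformprop}.

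For each $n \geq 1$, the series $\sum_{j \geq 0} a_n^{2^j}(t-\varrho)^{2^j n}$ is a well-defined element of $k((\varpi))[[t-\varrho]]$ whose Artin--Schreier image is $a_n(t-\varrho)^n$, so those terms are killed over $k((\varpi))((t-\varrho))$ itself; this uses only powers of $a_n$, never roots. The constant term $a_0 \in k((\varpi))$ requires an Artin--Schreier preimage in the field, and each coefficient $a_{-2^\ell m}$ with $m$ odd and $\ell \geq 1$ (of which there are only finitely many, since the pole part has length $N$) requires a $2^\ell$th root in the field in order to rewrite $a_{-2^\ell m}(t-\varrho)^{-2^\ell m}$ as the standard form term $a_{-2^\ell m}^{1/2^\ell}(t-\varrho)^{-m}$ via the identity $[b^{2^\ell}y^{2^\ell}] = [by]$. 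Negative odd-index terms $a_{-m}(t-\varrho)^{-m}$ are already in standard form.

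Let $E \subseteq k((\varpi))^{\text{alg}}$ be the compositum of $k((\varpi))$ with all of the finitely many Artin--Schreier preimages and $2^\ell$th roots thus required for both $\widetilde{F}$ and $\widetilde{G}$; then $E$ is a finite extension of $k((\varpi))$. Since $k$ is algebraically closed and $E$ inherits equal characteristic from $k((\varpi))$, the structure theorem for complete discretely valued fields with perfect residue field gives $E \cong k((\alpha))$ for any uniformizer $\alpha$ of $E$. Assembling the individual replacements inside $k((\alpha))((t-\varrho))$ yields standard form elements $F', G'$ with $[\widetilde{F}] = [F']$ and $[\widetilde{G}] = [G']$.

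The main obstacle I foresee is bookkeeping rather than substance: one must check that the various coefficient-by-coefficient manipulations can be carried out simultaneously inside one finite extension, that summing the infinite family of Artin--Schreier preimages indexed by the positive-degree terms converges in $(t-\varrho)$-adically in $k((\alpha))[[t-\varrho]]$, and that the resulting extension really has the form $k((\alpha))$. Finiteness of the extension reduces ultimately to the fact that only the two constant terms and the finite pole parts of $\widetilde{F}$ and $\widetilde{G}$ contribute new elements, while the form $k((\alpha))$ is forced by the algebraic closedness of $k$.
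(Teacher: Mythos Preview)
Your argument is correct and is essentially identical to the paper's own proof: both expand $\widetilde{F}$ and $\widetilde{G}$ as Laurent series in $t-\varrho$ over $k((\varpi))$, adjoin Artin--Schreier roots of the two constant terms and the finitely many $2^\ell$th roots needed for the even-index pole coefficients, and then invoke the computations from Proposition~\ref{stdformprop}. You are slightly more explicit than the paper about the convergence of the positive-degree correction and about why the resulting finite extension has the form $k((\alpha))$, but these are elaborations rather than differences of approach.
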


\begin{proof}
Let $\widetilde{F} = \sum_{n \ge -N} \phi_n (t-\varrho)^n$, and let $\widetilde{G} = \sum_{n \ge -N} \gamma_n (t-\varrho)^n$, where each coefficient $\phi_n$ and each coefficient $\gamma_n$ is in $k((\varpi))$. Define $\alpha \in k((\varpi))^{\text{alg}}$ such that $k((\alpha))((t-\varrho))$ is the finite extension of $k((\varpi))((t-\varrho))$ given by appending Artin--Schreier roots of $\phi_0$ and $\gamma_0$, and, for all $d = 2^\ell m$, $m$ being odd, the $2^{\ell}$-th root of $\phi_{-d}$ and of $\gamma_{-d}$. Then $[\phi_0] = [\gamma_0] = 0$ over $k((\alpha))((t-\varrho))$, and, for all $d = 2^\ell m$, $m$ being odd,
\[ [\phi_{-d}(t-\varrho)^{-d}] = [\phi_{-d}^{2^{-\ell}}(t-\varrho)^{-m}], \quad\text{ and }\quad [\gamma_{-d}(t-\varrho)^{-d}] = [\gamma_{-d}^{2^{-\ell}}(t-\varrho)^{-m}] \]

Hence, as in the proof of Proposition~\ref{stdformprop}, each of $\widetilde{F}$ and $\widetilde{G}$ is Artin--Schreier-equivalent over $k((\alpha))((t-\varrho))$ to an element in standard form with respect to $t-\varrho$.
\end{proof}


Let $q' \in k((\alpha))((t-\varrho))^{\text{alg}}$ such that $(q')^2 + (q') = F'$, let $s' \in k((\alpha))((t-\varrho))^{\text{alg}}$ such that $(s')^2 + (s') = G'$, and let $\widetilde{J} = G'(q' + \tilde{q}) +  \widetilde{F}(s' + \tilde{s})^2 + \widetilde{H}$.

\begin{lem}\label{finextlem2}
There exist a finite extension $k((\alpha'))\subseteq k((\varpi))^{\emph{alg}}$ of $k((\alpha))$ and $J \in k((\alpha'))((t-\varrho))$ in standard form with respect to $t-\varrho$ 
such that $[\widetilde{J}] = [J]$ over $k((\alpha'))((t-\varrho))$.
\end{lem}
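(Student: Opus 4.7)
The plan is to reduce this to essentially the same argument as Lemma~\ref{finextlem1}. The only extra work is verifying that $\widetilde{J}$ itself already lies in $k((\alpha))((t-\varrho))$, i.e.\ that the algebraic elements $q' + \tilde{q}$ and $s' + \tilde{s}$ appearing in its definition do not force a further field extension before we even begin the standard-form reduction.

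First I would address the containment $\widetilde{J} \in k((\alpha))((t-\varrho))$. The element $q' + \tilde{q}$ satisfies the Artin--Schreier equation
\[(q' + \tilde{q})^2 + (q' + \tilde{q}) = F' + \widetilde{F},\]
and by Lemma~\ref{finextlem1} we have $[F'] = [\widetilde{F}]$ over $k((\alpha))((t-\varrho))$, so there exists $\beta \in k((\alpha))((t-\varrho))$ with $\beta^2 + \beta = F' + \widetilde{F}$. Then $(q' + \tilde{q}) + \beta \in \{0,1\}$, whence $q' + \tilde{q} \in k((\alpha))((t-\varrho))$. An identical argument, using the Artin--Schreier equivalence $[G'] = [\widetilde{G}]$ over $k((\alpha))((t-\varrho))$, shows $s' + \tilde{s} \in k((\alpha))((t-\varrho))$, and hence $(s' + \tilde{s})^2 \in k((\alpha))((t-\varrho))$ as well. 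Since $G'\in k((\alpha))((t-\varrho))$ by construction and $\widetilde{F}, \widetilde{H} \in k((\varpi))((t-\varrho)) \subseteq k((\alpha))((t-\varrho))$, the defining expression for $\widetilde{J}$ exhibits it as an element of $k((\alpha))((t-\varrho))$.

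Once $\widetilde{J}$ is known to be a Laurent series with coefficients in $k((\alpha))$, I would apply verbatim the argument from Lemma~\ref{finextlem1} (which in turn mimics the proof of Proposition~\ref{stdformprop}): write $\widetilde{J} = \sum_{n \ge -N} \eta_n (t-\varrho)^n$ with $\eta_n \in k((\alpha))$, then adjoin to $k((\alpha))$ an Artin--Schreier root of $\eta_0$ together with, for every $d = 2^\ell m$ with $m$ odd and $1 \le d \le N$, a $2^\ell$-th root of $\eta_{-d}$. The resulting finite extension $k((\alpha')) \subseteq k((\varpi))^{\text{alg}}$ of $k((\alpha))$ suffices to carry out the coefficient-by-coefficient Artin--Schreier reductions $[\eta_n(t-\varrho)^n] = 0$ for $n \ge 1$ (using squaring), $[\eta_0] = 0$ (using the new Artin--Schreier root), and $[\eta_{-2^\ell m}(t-\varrho)^{-2^\ell m}] = [\eta_{-2^\ell m}^{2^{-\ell}}(t-\varrho)^{-m}]$ for each remaining negative index, producing an element $J$ in standard form with respect to $t-\varrho$ that is Artin--Schreier-equivalent to $\widetilde{J}$ over $k((\alpha'))((t-\varrho))$.

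The main (and only mildly subtle) obstacle is the first step: recognizing that even though $q'$, $\tilde{q}$, $s'$, $\tilde{s}$ are \emph{a priori} algebraic rather than elements of $k((\alpha))((t-\varrho))$, the particular combination defining $\widetilde{J}$ was chosen precisely so that the algebraic pieces cancel in pairs via the Artin--Schreier equivalences $[F'] = [\widetilde{F}]$ and $[G'] = [\widetilde{G}]$. Once that descent is made explicit, the remainder is just the standard-form algorithm of Proposition~\ref{stdformprop} applied over a non-algebraically-closed coefficient field, which costs only a further finite extension.
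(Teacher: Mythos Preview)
Your proposal is correct and follows essentially the same approach as the paper, which simply states that the proof is entirely analogous to that of Lemma~\ref{finextlem1}. Your explicit verification that $\widetilde{J}$ already lies in $k((\alpha))((t-\varrho))$ via the Artin--Schreier equivalences $[F'] = [\widetilde{F}]$ and $[G'] = [\widetilde{G}]$ is a helpful detail that the paper leaves implicit.
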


\begin{proof}
The proof of this lemma is entirely analogous to that of Lemma~\ref{finextlem1}.
\end{proof}

\begin{lem}\label{totramlem}
There exists a finite extension $k((\beta))\subseteq k((\varpi))^{\emph{alg}}$ of $k((\alpha'))$ such that each degree two extension $K_2|K_1$ of fields satisfying
\[\widehat{\mathcal{L}}_{(t-\varrho)} \supseteq K_2 \supseteq K_1 \supseteq \widehat{\mathcal{K}'}_{(t-\varrho)} \cong k((\beta))((t-\varrho)),\]
where $\mathcal{K}'$ denotes the fraction field of $k[[\beta, t]]$, and $\mathcal{L}$ denotes the Galois closure of $\mathcal{K}'[\tilde{q},\tilde{r}]$, is totally ramified.
\end{lem}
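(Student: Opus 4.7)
The strategy is to enlarge the residue-field side of the base so as to absorb the maximal unramified subextension of the completion. Since $k$ is algebraically closed, $k((\alpha'))$ is a complete discrete valuation field with algebraically closed residue field, so every finite separable extension of $k((\alpha'))$ is totally ramified and of the form $k((\beta))$ for some uniformizer $\beta \in k((\varpi))^{\text{alg}}$. Moreover, once the full extension $\widehat{\mathcal{L}}_{(t-\varrho)}/\widehat{\mathcal{K}'}_{(t-\varrho)}$ is shown to be totally ramified, every intermediate extension is automatically totally ramified (the residue field is constant along the tower), and in particular every degree-two step $K_2/K_1$ is totally ramified. It therefore suffices to find $k((\beta))$ making the whole extension totally ramified.

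To this end, I would first let $\mathcal{K}_0' = \text{Frac}(k[[\alpha',t]])$ and $\mathcal{L}_0$ denote the Galois closure over $\mathcal{K}_0'$ of $\mathcal{K}_0'[\tilde{q},\tilde{r}]$, and fix a prime $\mathfrak{P}_0$ of $\mathcal{L}_0$ above $(t-\varrho)$. The completion $\widehat{\mathcal{L}_0}_{\mathfrak{P}_0} \,|\, \widehat{\mathcal{K}_0'}_{(t-\varrho)}$ is a finite Galois extension of complete discrete valuation fields, whose residue field extension $\kappa/k((\alpha'))$ is finite and separable. By the opening observation, $\kappa \cong k((\beta))$ for some finite extension $k((\beta))\subseteq k((\varpi))^{\text{alg}}$ of $k((\alpha'))$, and this $k((\beta))$ is the candidate for the lemma. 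Setting $\mathcal{K}' = \text{Frac}(k[[\beta,t]])$ and $\mathcal{L}$ equal to the Galois closure over $\mathcal{K}'$ of $\mathcal{K}'[\tilde{q},\tilde{r}]$, the containment $\mathcal{L}_0\subseteq \mathcal{L}$ together with standard Galois theory gives $\mathcal{L} = \mathcal{L}_0\cdot \mathcal{K}'$.

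Next I would choose a prime $\mathfrak{P}$ of $\mathcal{L}$ lying over both $\mathfrak{P}_0$ and $(t-\varrho)\subseteq\mathcal{K}'$, and use the standard compatibility of compositums with completions to obtain
\[
\widehat{\mathcal{L}}_{\mathfrak{P}} \;=\; \widehat{\mathcal{L}_0}_{\mathfrak{P}_0}\cdot\widehat{\mathcal{K}'}_{(t-\varrho)}.
\]
Now $\widehat{\mathcal{K}'}_{(t-\varrho)} \cong k((\beta))((t-\varrho))$ has residue field $k((\beta)) = \kappa$ and has $t-\varrho$ as a uniformizer; it is therefore exactly the maximal unramified subextension of $\widehat{\mathcal{L}_0}_{\mathfrak{P}_0}/\widehat{\mathcal{K}_0'}_{(t-\varrho)}$. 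Consequently the compositum above equals $\widehat{\mathcal{L}_0}_{\mathfrak{P}_0}$ itself, and this is totally ramified over $\widehat{\mathcal{K}'}_{(t-\varrho)}$. Appealing to the first paragraph then concludes the proof.

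The step I expect to be the main obstacle is the completion-theoretic identity $\widehat{\mathcal{L}}_{\mathfrak{P}} = \widehat{\mathcal{L}_0}_{\mathfrak{P}_0}\cdot\widehat{\mathcal{K}'}_{(t-\varrho)}$ and the recognition of $\widehat{\mathcal{K}'}_{(t-\varrho)}$ as the maximal unramified subextension; both require a careful, compatible choice of primes in the tower $\mathcal{K}_0'\subseteq\mathcal{K}'\subseteq\mathcal{L}$ and the standard local-global compatibility for compositums of Galois extensions of complete discretely valued fields. The remainder of the argument reduces to the well-known fact that finite extensions of a complete discrete valuation field with algebraically closed residue field are totally ramified.
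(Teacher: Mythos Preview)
Your approach is genuinely different from the paper's. The paper argues constructively: it iterates the procedure of Lemma~\ref{finextlem1} (adjoining $2$-power roots of leading coefficients and Artin--Schreier roots of constant terms) across every degree-two step of the $D_4$-tower, so that each such step is generated by an Artin--Schreier root of an element of \emph{odd} valuation, whence total ramification by Proposition~\ref{ramprop}. You instead try to do everything in one stroke by identifying the residue field $\kappa$ of $\widehat{\mathcal{L}_0}_{\mathfrak{P}_0}$, setting $k((\beta))=\kappa$, and recognizing $\widehat{\mathcal{K}'}_{(t-\varrho)}$ as the maximal unramified subextension.

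There is, however, a genuine gap. You assert that the residue field extension $\kappa/k((\alpha'))$ is separable, but this is neither justified nor true in general. The field $k((\alpha'))$ is imperfect in characteristic~$2$, and the residue extension of a Galois extension of complete discrete valuation fields can be purely inseparable. Concretely, over $k((\alpha'))((t-\varrho))$ the Artin--Schreier extension generated by a root of $X^2+X=\alpha'(t-\varrho)^{-2}$ has ramification index $1$ and residue field $k((\sqrt{\alpha'}))$, purely inseparable over $k((\alpha'))$; such ``fierce'' behaviour is exactly what the even-valuation terms in $\widetilde F,\widetilde G,\widetilde H$ can produce before one passes to standard form. When $\kappa/k((\alpha'))$ has a nontrivial inseparable part, two things in your argument break simultaneously: $\widehat{\mathcal{K}'}_{(t-\varrho)}=k((\beta))((t-\varrho))$ is \emph{not} the maximal unramified subextension (unramified extensions have separable residue field by definition), and since $\widehat{\mathcal{L}_0}_{\mathfrak{P}_0}$ is separable over $\widehat{\mathcal{K}_0'}_{(t-\varrho)}$ while $\widehat{\mathcal{K}'}_{(t-\varrho)}$ is not, the containment $\widehat{\mathcal{K}'}_{(t-\varrho)}\subseteq\widehat{\mathcal{L}_0}_{\mathfrak{P}_0}$ fails, so the key identity $\widehat{\mathcal{L}}_{\mathfrak{P}}=\widehat{\mathcal{L}_0}_{\mathfrak{P}_0}$ does not hold. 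Even iterating your construction is delicate: for the datum $\alpha'(t-\varrho)^{-4}$, one pass to $k((\sqrt{\alpha'}))$ leaves the residue field still growing (to $k((\alpha'^{1/4}))$), so a single base change by the residue field does not suffice. The paper's explicit approach avoids all of this by forcing each Artin--Schreier datum to odd valuation, which is exactly the case in which fierce ramification cannot occur.
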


\begin{proof}
By appending elements to $k((\alpha'))$ as in Lemma~\ref{finextlem1}, we generate a finite extension $k((\beta))$ of $k((\alpha'))$ such that each degree two extension $K_2|K_1$ of fields satisfying $\widehat{\mathcal{L}}_{(t-\varrho)} \supseteq K_2 \supseteq K_1 \supseteq \widehat{\mathcal{K}'}_{(t-\varrho)}$ is generated by an Artin--Schreier root of an element in $K_1$ with odd valuation. Proposition~\ref{ramprop} then implies that each such extension is a totally ramified extension of fields.
\end{proof}

Now let $\mathcal{A}' = k[[\beta, t]]$, let $\mathcal{K}' = \text{Frac}(\mathcal{A}')$ (as in Lemma~\ref{totramlem}), and let $\mathcal{S}' = \mathcal{A}'[\beta^{-1}]$. Moreover, let $\mathcal{L}$ be the Galois closure of $\mathcal{K}'[\tilde{q}, \tilde{r}]$ (as in Lemma~\ref{totramlem}), let $\mathcal{B}$ be the integral closure of $\mathcal{A}'$ in $\mathcal{L}$, and let $\mathcal{T} = \mathcal{B}[\varpi^{-1}]$. 

\begin{cor}\label{stdcor}
Each of the factors of the degree eight $\widehat{\mathcal{K}'}_{(t-\varrho)}$-algebra $\widehat{\mathcal{L}}_{(t-\varrho)}$ is both totally ramified over and generated by standard form elements over $\widehat{\mathcal{K}'}_{(t-\varrho)} \cong k((\beta))((t-\varrho))$.
\end{cor}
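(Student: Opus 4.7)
The plan is to treat each field factor $\mathcal{L}_i$ of the product $\widehat{\mathcal{L}}_{(t-\varrho)}$ separately and verify the two claims. Each $\mathcal{L}_i$ is a finite Galois extension of $\widehat{\mathcal{K}'}_{(t-\varrho)}$ whose Galois group is a decomposition subgroup of $\text{Gal}(\mathcal{L}|\mathcal{K}') \cong D_4$, and hence a $2$-group. So $\mathcal{L}_i$ is built from $\widehat{\mathcal{K}'}_{(t-\varrho)}$ by a tower of quadratic subextensions, each of the form $K_2|K_1$ covered by Lemma~\ref{totramlem}; that lemma forces every such step to be totally ramified, and therefore $\mathcal{L}_i$ itself is totally ramified over $\widehat{\mathcal{K}'}_{(t-\varrho)}$.

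For generation by standard form elements, the idea is to convert the generating triple $(\widetilde{F}, \widetilde{G}, \widetilde{H})$ into a standard form triple $(F', G', J)$ in $k((\beta))((t-\varrho))$. Lemma~\ref{finextlem1} manufactures $F', G'$ in standard form with $[F'] = [\widetilde{F}]$ and $[G'] = [\widetilde{G}]$, and Lemma~\ref{finextlem2} manufactures $J$ in standard form with $[J] = [\widetilde{J}]$, where $\widetilde{J} = \widetilde{H} + G'(\tilde{q} + q') + \widetilde{F}(\tilde{s} + s')^2$. By Lemma~\ref{kerlem1} the elements $a := \tilde{q} + q'$ and $b := \tilde{s} + s'$ lie in $\widehat{\mathcal{K}'}_{(t-\varrho)}$, and the characteristic-two identity $\wp(a)b^2 + \wp(b)a = \wp(ab)$---essentially the computation of Lemma~\ref{qslem}---shows that $\widetilde{J}$ and $\widetilde{H} + \widetilde{G}(\tilde{q} + q') + F'(\tilde{s} + s')^2$ differ only by the Artin--Schreier boundary $\wp(ab)$ of an element of $\widehat{\mathcal{K}'}_{(t-\varrho)}$. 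Proposition~\ref{nongalprop}(1) then yields that the non-Galois degree four subfield, and hence the Galois closure, generated by $(F', G', J)$ agrees with that generated by $(\widetilde{F}, \widetilde{G}, \widetilde{H})$; so $\widehat{\mathcal{L}}_{(t-\varrho)}$ is presented over $\widehat{\mathcal{K}'}_{(t-\varrho)}$ by Artin--Schreier roots of the standard form elements $F'$, $G'$, and $G'q' + J$. Projecting through any factor $\mathcal{L}_i$ then exhibits the required standard form generators for that factor.

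I expect the principal obstacle to lie in the Artin--Schreier bookkeeping used to reduce the discrepancy between $\widetilde{J}$ and $\widetilde{H} + \widetilde{G}(\tilde{q} + q') + F'(\tilde{s} + s')^2$ to the single term $\wp(ab)$, and in verifying that $ab$ lands in $\widehat{\mathcal{K}'}_{(t-\varrho)}$ rather than only in some further extension, so that Proposition~\ref{nongalprop} can be applied directly without another enlargement of the ground field.
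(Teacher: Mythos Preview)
Your argument is correct and follows the same route as the paper: total ramification via Lemma~\ref{totramlem} applied to the tower of quadratic steps, and then Proposition~\ref{nongalprop}(1) to identify $\mathcal{K}'[\tilde q,\tilde r]$ with $\mathcal{K}'[q',r']$, where $(r')^2+r'=G'q'+J$.

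Two minor comments. First, your ``principal obstacle'' is illusory: the element $\widetilde J$ was \emph{defined} in the setup (just before Lemma~\ref{finextlem2}) as $G'(q'+\tilde q)+\widetilde F(s'+\tilde s)^2+\widetilde H$, which is exactly the combination appearing in the hypothesis of Proposition~\ref{nongalprop}(1) with $(F,G,H,q,s)=(\widetilde F,\widetilde G,\widetilde H,\tilde q,\tilde s)$ and $(F_0,G_0,H_0,q_0,s_0)=(F',G',J,q',s')$. Since $[J]=[\widetilde J]$ over $\widehat{\mathcal{K}'}_{(t-\varrho)}$, the condition $[\widetilde H]=[J+G'(\tilde q+q')+\widetilde F(\tilde s+s')^2]$ holds on the nose, and no $\wp(ab)$ bookkeeping is needed. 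Second, the fact that $\tilde q+q'$ and $\tilde s+s'$ lie in $\widehat{\mathcal{K}'}_{(t-\varrho)}$ is not Lemma~\ref{kerlem1} but the elementary observation that $\wp(\tilde q+q')=\widetilde F+F'\in\wp(\widehat{\mathcal{K}'}_{(t-\varrho)})$; this is the same remark used in the proof of Proposition~\ref{existprop}.
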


\begin{proof}
The first claim of the corollary follows immediately from Lemma~\ref{totramlem}. For the second claim, let $r' \in k((\beta))((t-\varrho))$ such that $(r')^2 + r' = G'q' + J$. By Proposition~\ref{nongalprop}, $\mathcal{K}'[\tilde{q}, \tilde{r}] = \mathcal{K}'[q', r']$. The second claim of the corollary now follows.
\end{proof}


\begin{lem}\label{d4lem}
Suppose that $\widetilde{F}, \widetilde{G} \in \mathcal{A}'_{(\beta)} \cap \mathcal{K} = \mathcal{A}_{(\varpi)}$, that $\widetilde{F} \equiv F \pmod{\varpi}$ and that $\widetilde{G} \equiv G \pmod{\varpi}$.
Then $\emph{Gal}(\mathcal{L}|\mathcal{K}') \cong D_4$.
\end{lem}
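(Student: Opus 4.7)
The plan is to apply Proposition~\ref{classprop}(4), according to which $\text{Gal}(\mathcal{L}|\mathcal{K}') \cong D_4$ if and only if $[\widetilde{G}] \neq 0$ over $\mathcal{K}'[\tilde{q}]$, which by Lemma~\ref{kerlem1} is equivalent to the conjunction $[\widetilde{G}] \neq 0$ over $\mathcal{K}'$ and $[\widetilde{G}] \neq [\widetilde{F}]$ over $\mathcal{K}'$. The strategy is to deduce each of these non-vanishings from the corresponding fact over $K$ (which holds because $L|K$ is already $D_4$) by reducing modulo $\beta$.

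The key input is that $\mathcal{A}'_{(\beta)}$ is a discrete valuation ring whose residue field is $\text{Frac}(\mathcal{A}'/\beta) = \text{Frac}(k[[t]]) = k((t)) = K$, and that its valuation $v_\beta$ has the property that for any $\alpha \in \mathcal{K}'$ with $v_\beta(\alpha) < 0$, one has $v_\beta(\alpha^2 + \alpha) = 2v_\beta(\alpha) < 0$. Hence whenever $\alpha^2 + \alpha$ lies in $\mathcal{A}'_{(\beta)}$, so does $\alpha$, and one may then reduce the Artin--Schreier relation modulo $\beta$ to land in $K$.

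Assume for contradiction that $[\widetilde{G}] = 0$ over $\mathcal{K}'$, so that $\widetilde{G} = \alpha^2 + \alpha$ for some $\alpha \in \mathcal{K}'$. By hypothesis $\widetilde{G} \in \mathcal{A}_{(\varpi)} \subseteq \mathcal{A}'_{(\beta)}$, so the preceding paragraph forces $\alpha \in \mathcal{A}'_{(\beta)}$. Reducing modulo $\beta$ and using $\widetilde{G} \equiv G \pmod \varpi$ yields $G = \bar{\alpha}^2 + \bar{\alpha}$ with $\bar{\alpha} \in K$, i.e., $[G] = 0$ over $K$. But since $L|K$ is $D_4$, Proposition~\ref{classprop}(4) together with Lemma~\ref{kerlem1} applied to the original $(F,G,H)$ generating $L$ gives $[G] \neq 0$ over $K$, a contradiction. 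The argument for $[\widetilde{G}] \neq [\widetilde{F}]$ over $\mathcal{K}'$ is identical: an equality $\widetilde{G} + \widetilde{F} = \alpha^2 + \alpha$ with $\alpha \in \mathcal{K}'$ would, again by the valuation argument and reduction modulo $\beta$, yield $[G] = [F]$ over $K$, contradicting (via Lemma~\ref{kerlem1} and Proposition~\ref{classprop}(4)) the fact that $L|K$ is $D_4$.

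Combining the two non-vanishings with Lemma~\ref{kerlem1} gives $[\widetilde{G}] \neq 0$ over $\mathcal{K}'[\tilde{q}]$, and Proposition~\ref{classprop}(4) then yields $\text{Gal}(\mathcal{L}|\mathcal{K}') \cong D_4$ as desired. There is no real obstacle here beyond identifying the correct DVR in which to carry out the reduction; the only point to verify with any care is that the residue field of $\mathcal{A}'_{(\beta)}$ is indeed $K$, so that the reduced Artin--Schreier relations truly live over the original base field.
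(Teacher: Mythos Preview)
Your proof is correct and follows essentially the same approach as the paper: reduce an Artin--Schreier relation in $\mathcal{K}'$ into $\mathcal{A}'_{(\beta)}$ (the paper phrases this as ``$\mathcal{A}'_{(\beta)}$ is integrally closed,'' your valuation argument is the same thing), then pass to the residue field $K$ and contradict the fact that $L|K$ is $D_4$. One small omission: before Proposition~\ref{classprop}(4) can be invoked you need $\mathcal{K}'[\tilde q]$ to actually be a degree-two extension of $\mathcal{K}'$, i.e., $[\widetilde F]\neq 0$ over $\mathcal{K}'$, which the paper checks explicitly and which follows by the identical reduction argument you gave for $[\widetilde G]\neq 0$.
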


\begin{proof}
Since $\text{Gal}(\mathcal{L}|\mathcal{K}') \cong D_4$, it follows that
$[F] \neq 0$ over $K$, that $[G] \neq 0$ over $K$, and that $[G] \neq [F]$ over $K$ by Proposition~\ref{nongalprop}.
Since $\widetilde{F} \equiv F \pmod{\varpi}$ and $\widetilde{G} \equiv G \pmod{\varpi}$, it follows that $\widetilde{F} \equiv F \pmod{\beta}$ and $\widetilde{G} \equiv G \pmod{\beta}$.  Hence $[\widetilde{F}] \neq 0$ over $\mathcal{A}'_{(\beta)}$, $[\widetilde{G}] \neq 0$ over $\mathcal{A}'_{(\beta)}$ and $[\widetilde{G}] \neq [\widetilde{F}]$ over $\mathcal{A}'_{(\beta)}$. 
Moreover, Since $\mathcal{A}'_{(\beta)}$ is a discrete valuation ring (and hence is integrally closed), it follows that $[\widetilde{F}] \neq 0$ over $\mathcal{K}'$, $[\widetilde{G}] \neq 0$ over $\mathcal{K}'$ and $[\widetilde{G}] \neq [\widetilde{F}]$ over $\mathcal{K}'$. Therefore, $\text{Gal}(\mathcal{L}|\mathcal{K'}) \cong D_4$ by Proposition~\ref{classprop} and by Lemma~\ref{kerlem1}.
\end{proof}


\subsection{First Deformation}

For the first equicharacteristic deformation, suppose that the sequence of ramification groups of $L$ over $K$ is of Type I, \emph{i.e.}, that $f < d = g$. 
Let $\widetilde{F} = F$, $\widetilde{G} = Gt^2(t-\varpi)^{-2}$, and
$\widetilde{H} = Ht^2(t-\varpi)^{-2}$. By Corollary~\ref{stdcor}, there exists a finite extension $k((\beta))$ of $k((\varpi))$ such that 
each of the factors of the degree eight $\widehat{\mathcal{K}'}_{(t-\varpi)}$-algebra $\widehat{\mathcal{L}}_{(t-\varpi)}$ is both totally ramified over and generated by standard form elements over $\widehat{\mathcal{K}'}_{(t-\varpi)} \cong k((\beta))((t-\varpi))$,
where $\mathcal{A}'$, $\mathcal{K}'$, $\mathcal{S}'$, $\mathcal{L}$, $\mathcal{B}$ and $\mathcal{T}$ are defined as in Subsection~\ref{subprep}.

\begin{prop}[First Deformation]\label{kleindef}


The following statements all hold.

\begin{enumerate}[label=\emph{(\arabic*)}]
\item $\emph{Gal}(\mathcal{L}|\mathcal{K}') \cong D_4$.
\item As a $D_4$-extension of Dedekind domains, $\mathcal{B}/(\beta)$ over $\mathcal{A}'/(\beta)$ is isomorphic to $B$ over $A$.
\item The $D_4$-extension of Dedekind domains $\mathcal{T}$ over $\mathcal{S}'$ is branched at precisely two maximal ideals, \emph{viz}.~$(t)$ and $(t-\varpi)$.
Above $(t)$, the inertia group is $D_4$, the sequence of lower ramification breaks is $(\ell_1, \ell_2 - 4, \ell_3 - 4)$, and the sequence of upper ramification breaks is $(u_1, u_2 - 2, u_3 - 2)$.
Above $(t-\varpi)$, the inertia group is $\emph{Gal}(\mathcal{L}|\mathcal{K}'[\tilde{q}]) \cong \mathbb{Z}/2\mathbb{Z} \times \mathbb{Z}/2\mathbb{Z}$, and the sequence of lower ramification breaks is $(1,1)$.
\end{enumerate}
\end{prop}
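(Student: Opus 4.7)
The plan follows the three parts of the claim. For Part (1), the hypotheses of Lemma~\ref{d4lem} hold: $\widetilde{F} = F$ lies in $\mathcal{A}_{(\varpi)}$, and $\widetilde{G} = Gt^2(t-\varpi)^{-2}$ also lies in $\mathcal{A}_{(\varpi)}$ since $t-\varpi \notin (\varpi)$; moreover, each reduces modulo $\varpi$ to $F$ and $G$ respectively, because $t^2(t-\varpi)^{-2}$ specializes to $1$ at $\varpi = 0$. Part (2) follows by the same reduction: the triple $(\widetilde{F},\widetilde{G},\widetilde{H})$ specializes at $\beta = 0$ (equivalently $\varpi = 0$) to $(F,G,H)$, so Proposition~\ref{triplesprop} identifies the special fiber $\mathcal{B}/(\beta)$ with the original extension $B/A$.

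For Part (3), I would first localize the branch locus: the elements $\widetilde{F}, \widetilde{G}, \widetilde{H}$ are all regular in $\mathcal{S}'$ outside $\{(t),(t-\varpi)\}$, so each of the three Artin--Schreier quadratic subextensions of $\mathcal{L}|\mathcal{K}'$ is unramified at every other prime. At $(t)$, I would Laurent-expand each generator in $t$ over $k((\beta))$ and apply the standardization procedure of Proposition~\ref{stdformprop}. Since $t^2(t-\varpi)^{-2}$ is a unit of $t$-adic valuation $0$ at $(t)$, the element $\widetilde{G}$ has pole order $g - 2$ in $t$, and a careful check shows that its most negative \emph{odd} $t$-exponent, at $-(g-2)$, survives standardization, while the even-exponent contributions reduce to odd degrees strictly smaller than $g - 2$. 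Analogously $\tilde{f} = f$, $\tilde{h} = h-2$, and, using $f < g$ with both odd, $\tilde{d} = g - 2$. Corollary~\ref{breakcor} then gives upper ramification breaks $(f, g-2, \max\{f+g,h\}-2) = (u_1, u_2-2, u_3-2)$, and Herbrand's formula yields the claimed lower breaks. Since $\text{Gal}(\mathcal{L}|\mathcal{K}') \cong D_4$ and ramification is non-trivial, Corollary~\ref{stdcor} forces the inertia at $(t)$ to be all of $D_4$.

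At $(t-\varpi)$, the element $\widetilde{F} = F$ is a unit, so $\mathcal{K}'[\tilde{q}]/\mathcal{K}'$ is unramified there, and the inertia lies inside $\text{Gal}(\mathcal{L}|\mathcal{K}'[\tilde{q}]) \cong \mathbb{Z}/2\mathbb{Z}\times\mathbb{Z}/2\mathbb{Z}$. Setting $u = t-\varpi$, both $\widetilde{G}$ and $\widetilde{H}$ have exact pole order $2$ at $(t-\varpi)$, with leading $u^{-2}$ coefficients $\varpi^2 G(\varpi)$ and $\varpi^2 H(\varpi)$. The three Klein four quadratic subextensions of $\mathcal{L}|\mathcal{K}'[\tilde{q}]$ are Artin--Schreier-defined by $\widetilde{G}$, $\widetilde{G}\tilde{q} + \widetilde{H}$, and $\widetilde{G}(\tilde{q}+1) + \widetilde{H}$. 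Subtracting $\gamma u^{-1}$ (with $\gamma^2$ matching the $u^{-2}$ coefficient, possibly after further enlarging $k((\beta))$ as in Lemma~\ref{finextlem1}) from each produces a standard-form element of degree $1$ in $u^{-1}$; Proposition~\ref{ramprop} then gives conductor $1$ for each quadratic subextension, and Lemma~\ref{raylem2} produces the Klein four inertia with lower breaks $(1,1)$.

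The main obstacle is verifying in the last step that the $u^{-2}$ leading coefficients of the latter two Artin--Schreier generators do not vanish. Simultaneous vanishing would force $F(\varpi)G(\varpi)^2 = H(\varpi)^2 + H(\varpi)G(\varpi)$, upon using $q_0^2 + q_0 = F(\varpi)$ for $q_0 = \tilde{q}$ evaluated at $u = 0$. This identity would have to hold already in $k((t))$ after replacing $\varpi$ by $t$, but a parity check on $\deg_{t^{-1}}$ excludes it: $FG^2$ has odd degree $f + 2g$, whereas $H^2 + HG$ has even degree in every relative-size case of $g$ versus $h$. Hence the required non-vanishing is automatic.
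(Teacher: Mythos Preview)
Your argument for Part~(2) has a genuine gap. Knowing that the triple $(\widetilde{F},\widetilde{G},\widetilde{H})$ specializes modulo $\beta$ to $(F,G,H)$ tells you only that the \emph{field} extension $\mathcal{L}|\mathcal{K}'$ specializes to $L|K$ (and Proposition~\ref{triplesprop} operates at the field level). It does \emph{not} tell you that the integral closure $\mathcal{B}$ of $\mathcal{A}'$ in $\mathcal{L}$ is smooth over $k[[\beta]]$, i.e.\ that $\mathcal{B}\cong k[[\beta,z]]$, which is what the isomorphism $\mathcal{B}/(\beta)\cong B$ of Dedekind domains requires. Integral closure does not in general commute with reduction modulo $\beta$; the standard criterion for when it does is constancy of the degree of the different, which is exactly the content of Theorem~3.4 in~\cite{GM98}. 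The paper accordingly proves (2) \emph{after} (3): from the ramification data at $(t)$ and $(t-\varpi)$ one computes via Hilbert's formula that the generic different is $(\delta_{L|K}-12)+12=\delta_{L|K}$, and then invokes~\cite{GM98}. Your direct specialization argument bypasses this and is not sufficient.

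A secondary remark on your analysis at $(t-\varpi)$: verifying that the $u^{-2}$ leading coefficient of $\widetilde{G}\tilde{q}+\widetilde{H}$ is nonzero does not by itself give a standard-form representative of degree exactly~$1$ in $u^{-1}$. After subtracting $\wp(\gamma u^{-1})$ you must still check that the resulting $u^{-1}$ coefficient does not vanish, and your parity argument on $FG^2$ versus $H^2+HG$ does not address this. (Incidentally, that identity is the obstruction to vanishing of \emph{either} individual $u^{-2}$ coefficient, not to simultaneous vanishing; simultaneous vanishing already forces $G(\varpi)=0$.) The cleanest fix, used in the paper's proofs of the Second and Third Deformations, is to observe that the contribution of $(t-\varpi)$ to the different is \emph{at most} $12$, whence the inequality $\delta_{\mathcal{T}|\mathcal{S}'}\ge\delta_{L|K}$ from~\cite{GM98} forces equality and hence the exact break values.
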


\begin{proof}


To prove (1), note that $\widetilde{F}, \widetilde{G} \in \mathcal{A}_{(\varpi)} = \mathcal{A}'_{(\beta)} \cap \mathcal{K}$ since $(t-\varpi)^{-2} = t^{-2}\sum_{n=0}^\infty (t^{-1}\varpi)^{2n}$. Note also that $\widetilde{F} \equiv F \pmod{\varpi}$, and that $\widetilde{G} \equiv G \pmod{\varpi}$. Thus (1) holds by Lemma~\ref{d4lem}. 


To prove (3) for the ideal $(t)$, consider the completion $\widehat{\mathcal{S}'}_{(t)} \cong k((\beta))[[t]]$ of the localization $\mathcal{S}'_{(t)}$ of $\mathcal{S}'$, and note that, over $k((\beta))[[t]]$, $(t - \varpi)^{-2}$ is a unit. Thus $-v_{(t)}(\widetilde{F}) = f$, $-v_{(t)}(\widetilde{G}) = g-2$
and $-v_{(t)}(\widetilde{H}) = h-2$ (unless $H = 0$).
Since $f$ is odd, $\widehat{\mathcal{K}'}_{(t)}[\tilde{q}]$ is a totally ramified extension of $\widehat{\mathcal{K}'}_{(t)} \cong k((\beta))((t))$ with conductor $f$ by Proposition~\ref{ramprop}. Similarly, $\widehat{\mathcal{K}'}_{(t)}[\tilde{s}]$ is a totally ramified extensions of $\widehat{\mathcal{K}'}_{(t)}$ with conductor $g-2$. Moreover, if $f < g-2$, $-v_{(t)}(\widetilde{F} + \widetilde{G}) = g-2$. Since $(t-\varpi)^{-2} = \varpi^{-2}\sum_{n=0}^\infty(t\varpi^{-1})^{2n}$, the coefficient of $t^{-g+2}$ in the Laurent series expansion of $\widetilde{G}$ is not contained in $k$, whereas the coefficient of $t^{-f}$ in the Laurent series expansion of $\widetilde{F}$ is contained in $k$. Thus, if $f = g-2$, $-v_{(t)}(\widetilde{F} + \widetilde{G}) = g-2$ as well. Therefore, by Propostion~\ref{ramprop}, $\widehat{\mathcal{K}'}_{(t)}[\tilde{q}+\tilde{s}]$ is ramified over $\widehat{\mathcal{K}'}_{(t)}$ with conductor $g-2$. Hence, by Corollary~\ref{breakcor}, the first, second and third terms in the sequence of upper ramification breaks over $(t)$ are $\min\{g-2, f\} = f = u_1$, $g-2 = u_2 - 2$ and $\min\{f + g-2, h-2\} = \max\{f + g, h\} - 2 = u_3 - 2$, respectively. 
Statement (3) for $(t)$ now follows by Herbrand's Formula.

To prove (3) for the ideal $(t-\varpi)$, note that, over the completion $\widehat{\mathcal{S}'}_{(t-\varpi)} \cong k((\beta))[[t-\varpi]]$ of the localization $\mathcal{S}'_{(t-\varpi)}$ of $\mathcal{S}'$, $t$ is a unit. Thus $-v_{(t-\varpi)}(\widetilde{F}) = 0$, $-v_{(t-\varpi)}(\widetilde{G}) = -v_{(t-\varpi)}(\widetilde{F} + \widetilde{G}) = 2$, and $-v_{(t-\varpi)}(\widetilde{H}) \le 2$. 
Since each factor of $\widehat{\mathcal{L}}_{(t-\varpi)}$ is generated by standard form elements over $\widehat{\mathcal{K}'}_{(t-\varpi)}$, it follows that $[\widetilde{F}] = 0$ over $\widehat{\mathcal{K}'}_{(t-\varpi)}$ and that thus $\widehat{\mathcal{K}'}_{(t-\varpi)}[\tilde{q}] = \widehat{\mathcal{K}'}_{(t-\varpi)}$.
Furthermore, the conductor of $\widehat{\mathcal{K}'}_{(t-\varpi)}[\tilde{q},\tilde{s}]$ over $\widehat{\mathcal{K}}_{(t-\varpi)}[\tilde{q}]$ is 1.


Since $[\widetilde{F}] = 0$ over $\widehat{\mathcal{K}'}_{(t-\varpi)}$, the fact that each factor of $\widehat{\mathcal{L}}_{(t-\varpi)}$ is generated by standard form elements over $\widehat{\mathcal{K}'}_{(t-\varpi)}$ implies that
$\widetilde{G}\tilde{q} + \widetilde{H}$ is Artin--Schreier-equivalent over $\widehat{\mathcal{K}'}_{(t-\varpi)}$ to an element $J$ in standard form with respect to $t-\varpi$.
Moreover, since $\tilde{q} \notin k((\beta))$, $-v_{(t-\varpi)}(\widetilde{G}\tilde{q} + \widetilde{H}) = 2$. Thus $-v_{(t-\varpi)}(J) = 1$, and the conductor of $\widehat{\mathcal{K}'}_{(t-\varpi)}[\tilde{q}, \tilde{r}]$ over $\widehat{\mathcal{K}'}_{(t-\varpi)}[\tilde{q}]$ is 1. Similarly,  $-v_{(t-\varpi)}(\widetilde{G}\tilde{q} + \widetilde{G} + \widetilde{H}) = 2$, and the conductor of $\widehat{\mathcal{K}'}_{(t-\varpi)}[\tilde{q}, \tilde{r}+\tilde{s}]$ over $\widehat{\mathcal{K}'}_{(t-\varpi)}[\tilde{q}]$ is 1. Statement (3) for $(t-\varpi)$ now follows by Lemma~\ref{raylem2}.

Finally, to prove (2), note that the degree $\delta_{L|K}$ of the different of $L$ over $K$ is $4\ell_1 + 2\ell_2 + \ell_3 + 7$ by Hilbert's different formula ~\cite{LocalFields}. Similarly, by (3), the contribution of $(t)$ to the degree $\delta_{\mathcal{T}'|\mathcal{S}'}$ of the different of $\mathcal{T}'$ over $\mathcal{S}'$ is $4\ell_1 + 2(\ell_2 - 4) + (\ell_3 - 4) + 7 = \delta_{L|K} - 12$, and the contribution of $(t-\varpi)$ is $\delta_{\mathcal{T}'|\mathcal{S}'}$ is $2\cdot(2(1) + 1 + 3) = 12$
. Thus $\delta_{\mathcal{T}'|\mathcal{S}'} = \delta_{L|K} - 12 + 12 = \delta_{L|K}$. Therefore (2) holds by Theorem 3.4 in ~\cite{GM98}.
\end{proof} 

\subsection{Second Deformation}
For the second equicharacteristic deformation, suppose that the sequence of ramification groups of $L$ over $K$ is of Type II, \emph{i.e.}, that $d < f = g$. 
Let $a_f$ denote the coefficient of $t^{-f}$ in the Laurent series expansion of $F$, and let $a_g$ denote the coefficient of $t^{-g}$ in the Laurent series expansion of $G$. Let also $\widetilde{F} = F + a_ft^{-f} + a_ft^{-f + 2}(t-\varpi)^{-2}$, $\widetilde{G} = G + a_gt^{-g} + a_gt^{-g + 2}(t-\varpi)^{-2}$, and
$\widetilde{H} = Ht^4(t-\varpi)^{-4}$. By Corollary~\ref{stdcor}, there exists a finite extension $k((\beta))$ of $k((\varpi))$ such that 
each of the factors of the degree eight $\widehat{\mathcal{K}'}_{(t-\varpi)}$-algebra $\widehat{\mathcal{L}}_{(t-\varpi)}$ is both totally ramified over and generated by standard form elements over $\widehat{\mathcal{K}'}_{(t-\varpi)} \cong k((\beta))((t-\varpi))$,
where $\mathcal{A}'$, $\mathcal{K}'$, $\mathcal{S}'$, $\mathcal{L}$, $\mathcal{B}$ and $\mathcal{T}$ are defined as in Subsection~\ref{subprep}.

\begin{prop}[Second Deformation]\label{cyclicdef}
%

The following statements all hold.

\begin{enumerate}[label=\emph{(\arabic*)}]
\item $\emph{Gal}(\mathcal{L}|\mathcal{K}') \cong D_4$.
\item As a $D_4$-extension of Dedekind domains, $\mathcal{B}/(\beta)$ over $\mathcal{A}'/(\beta)$, is isomorphic to $B$ over $A$.
\item The $D_4$-extension of Dedekind domains $\mathcal{T}$ over $\mathcal{S}'$ is branched at precisely two maximal ideals, \emph{viz}.~$(t)$ and $(t-\varpi)$.
Above $(t)$, the inertia group is $D_4$, the sequence of lower ramification breaks is $(\ell_1, \ell_2 - 4, \ell_3 - 12)$, and the sequence of upper ramification breaks is $(u_1, u_2 - 2, u_3 - 4)$.
Above $(t-\varpi)$, the inertia group is $\emph{Gal}(\mathcal{L}|\mathcal{K}'[\tilde{q} + \tilde{s}]) \cong \mathbb{Z}/4\mathbb{Z}$, and the sequence of lower ramification breaks is $(1,5)$.
\end{enumerate}
\end{prop}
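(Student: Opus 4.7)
The plan is to mirror the three-step structure of the proof of Proposition~\ref{kleindef}: (1) check the $D_4$-Galois condition on the generic fiber via Lemma~\ref{d4lem}; (3) analyze the ramification separately above $(t)$ and $(t-\varpi)$; and (2) deduce the special-fiber isomorphism by matching different degrees via Theorem~3.4 of~\cite{GM98}. For~(1), the characteristic-two expansion $(t-\varpi)^{-2} = \varpi^{-2} + t^2\varpi^{-4} + t^4\varpi^{-6} + \cdots$ puts $\widetilde{F}, \widetilde{G}, \widetilde{H}$ in $\mathcal{A}_{(\varpi)}$ and reduces $a_f t^{-f+2}(t-\varpi)^{-2}$ to $a_f t^{-f}$ modulo $\varpi$, which cancels the explicit $a_f t^{-f}$ correction in $\widetilde{F}$; hence $\widetilde{F} \equiv F$, $\widetilde{G} \equiv G$, and Lemma~\ref{d4lem} gives $\mathrm{Gal}(\mathcal{L}\mid\mathcal{K}') \cong D_4$.

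For~(3) above $(t)$, I would work in $\widehat{\mathcal{S}'}_{(t)} \cong k((\beta))[[t]]$, where $(t-\varpi)^{-2}$ is a unit with leading term $\varpi^{-2}$. Tracking valuations, one obtains $-v_{(t)}(\widetilde{F}) = f - 2$ (the $a_f t^{-f}$ contributions cancel and the $\varpi^{-2}$-scaled term dominates), and similarly $-v_{(t)}(\widetilde{G}) = g - 2$ and $-v_{(t)}(\widetilde{H}) = h - 4$. The crucial observation is that the Type~II hypothesis $d < f = g$ forces $a_f = a_g$ in characteristic two (the $t^{-f}$ coefficient of $F + G$ vanishes), so the added deformation terms cancel in the sum and $\widetilde{F} + \widetilde{G} = F + G$ exactly, giving $-v_{(t)}(\widetilde{F}+\widetilde{G}) = d$. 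Corollary~\ref{breakcor} then supplies upper breaks $(d, f-2, \max\{2f,h\}-4) = (u_1, u_2-2, u_3-4)$ and Herbrand's Formula the claimed lower breaks $(\ell_1, \ell_2-4, \ell_3-12)$.

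For~(3) above $(t-\varpi)$, the characteristic-two identity $(t-\varpi)^2 + t^2 = \varpi^2$ yields $\widetilde{F} = F + a_f t^{-f}\varpi^2(t-\varpi)^{-2}$ and similarly for $\widetilde{G}$, so $\widetilde{F} + \widetilde{G} = F + G$ is holomorphic at $t = \varpi$, and its Artin--Schreier class reduces to that of the constant $(F+G)(\varpi) \in k((\beta))$, which is zero after the extensions built into $k((\beta))$. Hence $\tilde{q}+\tilde{s} \in \widehat{\mathcal{K}'}_{(t-\varpi)}$, placing the decomposition (and by Lemma~\ref{totramlem} the inertia) at any prime above $(t-\varpi)$ inside $\mathrm{Gal}(\mathcal{L}\mid\mathcal{K}'[\tilde{q}+\tilde{s}]) \cong \mathbb{Z}/4\mathbb{Z}$, with equality forced by the total ramification of each degree-two subextension. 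Reducing $\widetilde{F}$ to standard form at $(t-\varpi)$ via $[c(t-\varpi)^{-2}] = [c^{1/2}(t-\varpi)^{-1}]$ yields coefficient $a_f^{1/2}\varpi^{(2-f)/2} + a_f\varpi^{1-f}$ at $(t-\varpi)^{-1}$; the two summands have distinct $\beta$-valuations, so the coefficient is nonzero and $\ell_1 = 1$. For $\ell_2 = 5$ I would rewrite the local $\mathbb{Z}/4\mathbb{Z}$-extension in cyclic form by substituting $r' = \tilde{r} + (\tilde{q}+\tilde{s})\tilde{q}$, producing $(r')^2 + r' = \widetilde{F}\tilde{q} + \widetilde{H}'$ with $\widetilde{H}' = \widetilde{H} + (\tilde{q}+\tilde{s})^2\widetilde{F}$, and then apply Corollary~\ref{condcor} with inner conductor $1$ and outer standard-form degree $h' = 3$; the latter comes from the $(t-\varpi)^{-3}$-coefficient $H'(\varpi)\varpi^4$ of $\widetilde{H}$ (using $(t^4)' = 0$ in characteristic two) surviving AS-reduction of all the even-degree contributions from both $\widetilde{H}$ and $(\tilde{q}+\tilde{s})^2\widetilde{F}$.

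For~(2), Hilbert's formula gives $\delta_{L|K} = 4\ell_1 + 2\ell_2 + \ell_3 + 7$; by~(3), the $(t)$-contribution to $\delta_{\mathcal{T}|\mathcal{S}'}$ is $4\ell_1 + 2(\ell_2-4) + (\ell_3-12) + 7 = \delta_{L|K} - 20$, while above $(t-\varpi)$ the two primes (since $\mathbb{Z}/4\mathbb{Z}$ has index two in $D_4$) each contribute a local $\mathbb{Z}/4\mathbb{Z}$-different of degree $3 + 2(1) + 5 = 10$, totalling $20$; hence $\delta_{\mathcal{T}|\mathcal{S}'} = \delta_{L|K}$ and Theorem~3.4 of~\cite{GM98} delivers~(2). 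The main obstacle is the $(t-\varpi)$-analysis: the cyclic-form rewriting demands the Witt-vector-style substitution above, and verifying $h' = 3$ requires careful bookkeeping to confirm that the $H'(\varpi)\varpi^4$ contribution survives and dominates after the cascade of AS-reductions; the degenerate case in which this coefficient vanishes (notably $H = 0$) will likely require either a generic-$\varpi$ argument or a supplementary modification of the deformation of~$H$.
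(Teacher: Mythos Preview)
Your handling of (1) and of (3) above $(t)$ is essentially the paper's, though the expansion you cite in (1), $(t-\varpi)^{-2}=\varpi^{-2}+t^{2}\varpi^{-4}+\cdots$, lives in $k((\varpi))[[t]]$ rather than in $\mathcal{A}_{(\varpi)}$; the paper uses $(t-\varpi)^{-2}=t^{-2}\sum_{n\ge 0}(t^{-1}\varpi)^{2n}$ to see both membership in $\mathcal{A}_{(\varpi)}$ and the reduction $\widetilde F\equiv F\pmod\varpi$.

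The real gap is at $(t-\varpi)$. After your substitution $r'=\tilde r+(\tilde q+\tilde s)\tilde q$ you obtain $(r')^{2}+r'=\widetilde F\,\tilde q+\widetilde H'$ and then invoke Corollary~\ref{condcor}; but that corollary (through Definition~\ref{odddef}) requires the first-level element to have \emph{odd} pole order, while $-v_{(t-\varpi)}(\widetilde F)=2$. Replacing $\widetilde F$ by its standard-form representative $F'$ forces an adjustment of the second-level datum by terms built from $q'+\tilde q$ (of pole order~$1$) together with $\widetilde F$ or $F'$, and it is precisely these cross-terms---not $\widetilde H$---that carry the $(t-\varpi)^{-3}$ contribution when $H=0$. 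Thus the degenerate case you flag is not a boundary nuisance but a symptom of having dropped the dominant term. The paper sidesteps the direct computation entirely: via Proposition~\ref{nongalprop} it passes to $\widetilde J=G'(q'+\tilde q)+\widetilde F(s'+\tilde s)^{2}+\widetilde H$, records only the \emph{inequality} $-v_{(t-\varpi)}(J)\le 3$ and hence $b\le 5$, so that the $(t-\varpi)$-contribution to $\delta_{\mathcal T|\mathcal S'}$ is at most $20$. Combined with the exact $(t)$-contribution $\delta_{L|K}-20$, this yields $\delta_{\mathcal T|\mathcal S'}\le\delta_{L|K}$; the reverse inequality from Theorem~3.4 of~\cite{GM98} then forces equality, giving $b=5$ and statement~(2) in one stroke. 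This squeeze argument is the key idea you are missing; with it, no nonvanishing check---and no special treatment of $H=0$---is needed.
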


\begin{proof}


To prove (1), note that $\widetilde{F}, \widetilde{G} \in \mathcal{A}_{(\varpi)} = \mathcal{A}'_{(\beta)} \cap \mathcal{K}$ since $(t-\varpi)^{-2} = t^{-2}\sum_{n=0}^\infty (t^{-1}\varpi)^{2n}$. Note also that $\widetilde{F} \equiv F \pmod{\varpi}$, and that $\widetilde{G} \equiv G \pmod{\varpi}$. Thus (1) holds by Lemma~\ref{d4lem}. 


To prove (3) for the ideal $(t)$, note that, over the completion $\widehat{\mathcal{S}'}_{(t)} \cong k((\beta))[[t]]$ of the localization $\mathcal{S}'_{(t)}$ of $\mathcal{S}'$, $(t - \varpi)^{-2}$ is a unit. Thus $-v_{(t)}(\widetilde{F}) = f-2$ and $-v_{(t)}(\widetilde{G}) = g-2$, and $-v_{(t)}(\widetilde{H}) = h-4$ (unless $H = 0$).
Since $f-2$ is odd, $\widehat{\mathcal{K}'}_{(t)}[\tilde{q}]$ is a totally ramified extension of $\widehat{\mathcal{K}'}_{(t)} \cong k((\beta))((t))$ with conductor $f-2$ by Propostion~\ref{ramprop}. Similarly, $\widehat{\mathcal{K}'}_{(t)}[\tilde{s}]$ is totally ramified over $\widehat{\mathcal{K}'}_{(t)}$ with conductor $g-2$.
Moreover, since $d < f = g$, it follows that $a_f = a_g$ and that $\widetilde{F} + \widetilde{G} = F + G$. Thus $-v_{(t)}(\widetilde{F} + \widetilde{G}) = d$.
Therefore, since $f-2$, $g-2$ and $d$ are all both positive and odd, it follows by Corollary~\ref{breakcor} that $\widehat{\mathcal{L}}_{(t)}$ is totally ramified over $\widehat{\mathcal{K}'}_{(t)}$, and that the first, second and third terms of the sequence of upper ramification breaks over $(t)$ are $\min\{d, f-2\} = d = u_1$, $g - 2 = u_2 - 2$ and $\max\{f-2 + g-2, h-4\} = \max\{f + g, h\} - 4 = u_3 - 4$,  respectively. Statement (3) for $(t)$ now follows by Herbrand's Formula.

To prove (3) for the ideal $(t-\varpi)$, note that, over the completion $\widehat{\mathcal{S}'}_{(t-\varpi)} \cong k((\beta))[[t-\varpi]]$ of the localization $\mathcal{S}'_{(t-\varpi)}$ of $\mathcal{S}'$, $t$ is a unit. Thus $-v_{(t-\varpi)}(\widetilde{F}) = -v_{(t-\varpi)}(\widetilde{G}) = 2$, $-v_{(t-\varpi)}(\widetilde{F} + \widetilde{G}) = -v_{(t-\varpi)}(F + G) = 0$, and $-v_{(t-\varpi)}(\widetilde{H}) \le 4$.
Since each factor of $\widehat{\mathcal{L}}_{(t-\varpi)}$ is generated by standard form elements over $\widehat{\mathcal{K}'}_{(t-\varpi)}$, it follows that $[\widetilde{F} + \widetilde{G}] = 0$ over $\widehat{\mathcal{K}'}_{(t-\varpi)}$ and that thus $\widehat{\mathcal{K}'}_{(t-\varpi)}[\tilde{q}+\tilde{s}] = \widehat{\mathcal{K}'}_{(t-\varpi)}$.
Furthermore, the conductor of $\widehat{\mathcal{K}'}_{(t-\varpi)}[\tilde{q}] = \widehat{\mathcal{K}'}_{(t-\varpi)}[\tilde{s}]$ over $\widehat{\mathcal{K}'}_{(t-\varpi)}$ is 1.

Let $F'$ and $G'$ denote the elements of $\widehat{\mathcal{K}'}_{(t-\varpi)}$ in standard form that are Artin--Schreier-equivalent to $\widetilde{F}$ and $\widetilde{G}$, respectively, and let $q',s' \in \widehat{\mathcal{K}'}_{(t-\varpi)}^{\text{alg}}$ such that $(q')^2 + q' = F'$ and $(s')^2 + s' = G'$.
Let also $\widetilde{J} = G'(q' + \tilde{q}) + \widetilde{F}(s' + \tilde{s})^2 + \widetilde{H} \in \widehat{\mathcal{K}'}_{(t-\varpi)}$.
Then $[\widetilde{G}\tilde{q} + \widetilde{H}] = [G'q' + \widetilde{J}]$ by Proposition~\ref{nongalprop}.
Since $\widehat{\mathcal{L}}_{(t-\varpi)}$ is generated by standard form elements over $\widehat{\mathcal{K}'}_{(t-\varpi)}$, it follows that $\widetilde{J}$ is Artin--Schreier-equivalent over $\widehat{\mathcal{K}'}_{(t-\varpi)}$ to an element $J$ in standard form with respect to $t-\varpi$.

Let $b$ denote the conductor of $\widehat{\mathcal{K}'}_{(t-\varpi)}[\tilde{q},\tilde{r},\tilde{s}] = \widehat{\mathcal{K}'}_{(t-\varpi)}[\tilde{q}, \tilde{r}]$ over $\widehat{\mathcal{K}'}_{(t-\varpi)}[\tilde{q}]$.
It follows from Proposition IV.2 in~\cite{LocalFields} that the sequence of lower ramification breaks of $\widehat{\mathcal{K}'}_{(t-\varpi)}[\tilde{q}, \tilde{r}]$ over $\widehat{\mathcal{K}'}_{(t-\varpi)}[\tilde{q}]$ is $(1, b)$.
Since $-v_{(t-\varpi)}(F' + \widetilde{F}) = 2$, $-v_{(t-\varpi)}(q' + \tilde{q}) = 1$. Similarly, $-v_{(t-\varpi)}(s' + \tilde{s}) = 1$. Thus
\[-v_{(t-\varpi)}(\widetilde{J}) = G'(q' + \tilde{q}) + \widetilde{F}(s' + \tilde{s})^2 + \widetilde{H} \le 4,\]
and hence $-v_{(t-\varpi)}(J) \le 3$. Since $\widehat{\mathcal{K}'}_{(t-\varpi)}[\tilde{q}] = \widehat{\mathcal{K}'}_{(t-\varpi)}[\tilde{s}]$, Proposition~\ref{asprop} implies that $F' = G'$. Therefore, $b = 2\max\{1 + 1,-v_{(t-\varpi)}(J)\} - 1 \le 5$ by Corollary~\ref{condcor}.
Thus the contribution of $(t-\varpi)$ to the degree $\delta_{\mathcal{T}'|\mathcal{S}'}$ of the different of $\mathcal{T}'$ over $\mathcal{S}'$ is $2 \cdot (2(1) + b + 3) = 2b + 10\le 20$ by Hilbert's different formula. Moreover,
the contribution of $(t)$ to the degree $\delta_{\mathcal{T}'|\mathcal{S}'}$ is $4\ell_1 + 2(\ell_2 - 4) + (\ell_3 - 12) + 7 = \delta_{L|K} - 20$ by statement (3) for $(t)$. Hence $\delta_{\mathcal{T}'|\mathcal{S}'} \le \delta_{L|K}$. By Theorem 3.4 in ~\cite{GM98}, $\delta_{\mathcal{T}'|\mathcal{S}'} \ge \delta_{L|K}$. Thus $\delta_{\mathcal{T}'|\mathcal{S}'} = \delta_{L|K}$, $2b + 10 = 20$, and $b=5$. Statement (3) for $(t-\varpi)$ now follows immediately, and statement (2) follows by Theorem 3.4 in ~\cite{GM98}.
\end{proof} 

\subsection{Third Deformation}

For the third equicharacteristic deformation, suppose that $u_1 = \min\{d,f\} > 1$.
Let $\widetilde{F} = Ft^2(t-\varpi)^{-2}$, $\widetilde{G} = Gt^2(t-\varpi)^{-2}$, and $\widetilde{H} = Ht^4(t-\varpi)^{-4}$. By Corollary~\ref{stdcor}, there exists a finite extension $k((\beta))$ of $k((\varpi))$ such that 
each of the factors of the degree eight $\widehat{\mathcal{K}'}_{(t-\varpi)}$-algebra $\widehat{\mathcal{L}}_{(t-\varpi)}$ is both totally ramified over and generated by standard form elements over $\widehat{\mathcal{K}'}_{(t-\varpi)} \cong k((\beta))((t-\varpi))$,
where $\mathcal{A}'$, $\mathcal{K}'$, $\mathcal{S}'$, $\mathcal{L}$, $\mathcal{B}$ and $\mathcal{T}$ are defined as in Subsection~\ref{subprep}.

\begin{prop}[Third Deformation]\label{maindef}
The following statements all hold.

\begin{enumerate}[label=\emph{(\arabic*)}]
\item $\emph{Gal}(\mathcal{L}|\mathcal{K}') \cong D_4$.
\item As a $D_4$-extension of Dedekind domains, $\mathcal{B}/(\beta)$ over $\mathcal{A}'/(\beta)$ is isomorphic to $B$ over $A$.
\item The $D_4$-extension of Dedekind domains $\mathcal{T}$ over $\mathcal{S}'$ is branched at precisely two maximal ideals, \emph{viz}.~$(t)$ and $(t-\varpi)$.
Above $(t)$, the inertia group is $D_4$, the sequence of lower ramification breaks is $(\ell_1 - 2, \ell_2 - 2, \ell_3 - 10)$, and the sequence of upper ramification breaks is $(u_1 - 2, u_2 - 2, u_3 - 4)$.
Above $(t-\varpi)$, the inertia group is $D_4$, and the sequence of lower ramification breaks is $(1,1,9)$.
\end{enumerate}
\end{prop}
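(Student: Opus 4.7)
The plan is to follow the template of Propositions~\ref{kleindef} and~\ref{cyclicdef}, verifying (1), then (3) at $(t)$, then (3) at $(t-\varpi)$, and finally (2), with the last two claims entangled through a degree-of-different equality. For (1), note that $\widetilde{F}=Ft^2(t-\varpi)^{-2}$ and $\widetilde{G}=Gt^2(t-\varpi)^{-2}$ both lie in $\mathcal{A}_{(\varpi)}$, since $(t-\varpi)^2=t^2+\varpi^2\notin(\varpi)$ in characteristic two, and they reduce modulo $\varpi$ to $F$ and $G$, respectively; Lemma~\ref{d4lem} then yields (1). For (3) at $(t)$, work in $\widehat{\mathcal{S}'}_{(t)}\cong k((\beta))[[t]]$, in which both $(t-\varpi)^{-2}$ and $(t-\varpi)^{-4}$ are units, to find $-v_{(t)}(\widetilde{F})=f-2$, $-v_{(t)}(\widetilde{G})=g-2$, $-v_{(t)}(\widetilde{F}+\widetilde{G})=d-2$, and $-v_{(t)}(\widetilde{H})=h-4$ (or $\widetilde{H}=0$ if $H=0$). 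The hypothesis $u_1=\min\{d,f\}>1$ together with the odd parity of $d$, $f$, and $g$ guarantees that all three shifted degrees are positive and odd, so Corollary~\ref{breakcor} produces upper breaks $(u_1-2,u_2-2,u_3-4)$ at $(t)$, and Herbrand's formula then gives the claimed lower breaks.

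For (3) at $(t-\varpi)$, work in $\widehat{\mathcal{S}'}_{(t-\varpi)}\cong k((\beta))[[t-\varpi]]$, in which $t$ is a unit, and observe that the leading $(t-\varpi)^{-2}$ coefficients of $\widetilde{F}$, $\widetilde{G}$, and $\widetilde{F}+\widetilde{G}$ are $F(\varpi)\varpi^2$, $G(\varpi)\varpi^2$, and $(F+G)(\varpi)\varpi^2$, all nonzero. Thus $-v_{(t-\varpi)}(\widetilde{F})=-v_{(t-\varpi)}(\widetilde{G})=-v_{(t-\varpi)}(\widetilde{F}+\widetilde{G})=2$; passing to the standard forms $F'$ and $G'$ (whose existence over $k((\beta))$ is ensured by our choice of $\beta$) reduces each pole order to $1$, so the three conductors $c_q$, $c_s$, and $c_{q+s}$ at $(t-\varpi)$ all equal $1$, and the corresponding three Artin--Schreier subextensions are pairwise distinct. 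Proposition~\ref{breakprop} then yields lower breaks $(1,1,2c_r-1)$ at $(t-\varpi)$, where $c_r$ is the conductor of $\mathcal{K}'[\tilde{q},\tilde{r}]$ over $\mathcal{K}'[\tilde{q}]$ at $(t-\varpi)$. Following the template of Proposition~\ref{cyclicdef}, set $\widetilde{J}=G'(q'+\tilde{q})+\widetilde{F}(s'+\tilde{s})^2+\widetilde{H}$ and invoke Proposition~\ref{nongalprop} to replace $(\widetilde{F},\widetilde{G},\widetilde{H})$ by the $D_4$-odd form triple $(F',G',J)$, where $J$ is the standard form of $\widetilde{J}$. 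Since $-v_{(t-\varpi)}(q'+\tilde{q})=-v_{(t-\varpi)}(s'+\tilde{s})=1$, a direct term-by-term estimate gives $-v_{(t-\varpi)}(\widetilde{J})\leq 4$, whence $-v_{(t-\varpi)}(J)\leq 3$; Lemma~\ref{condlem} therefore yields $c_r\leq 2\max\{2,3\}-1=5$, so $\ell_3\leq 9$ at $(t-\varpi)$.

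For (2), couple this upper bound with Theorem~3.4 of~\cite{GM98}. The contribution of $(t)$ to $\delta_{\mathcal{T}|\mathcal{S}'}$ is $4(\ell_1-2)+2(\ell_2-2)+(\ell_3-10)+7=\delta_{L|K}-22$ by the computation above, and that of $(t-\varpi)$ is at most $4(1)+2(1)+9+7=22$, so $\delta_{\mathcal{T}|\mathcal{S}'}\leq\delta_{L|K}$; the reverse inequality from~\cite{GM98} forces equality, thereby pinning the third break at $(t-\varpi)$ to exactly $9$ (completing (3)) and simultaneously yielding (2) as in the previous two deformations. The main obstacle is the precise accounting at $(t-\varpi)$: because each of $\widetilde{F}$, $\widetilde{G}$, and $\widetilde{F}+\widetilde{G}$ attains the maximal allowed pole order two with no Artin--Schreier cancellation to a smaller order, the third ramification jump cannot be read off from any single ``no cancellation'' valuation estimate and must instead be pinned down by combining the direct upper bound above with the global inequality on the degree of the different.
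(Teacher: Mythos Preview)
Your proposal is correct and follows essentially the same route as the paper's own proof: you establish (1) via Lemma~\ref{d4lem}, compute the breaks at $(t)$ directly from Corollary~\ref{breakcor}, bound the third break at $(t-\varpi)$ by $9$ via the standard-form replacement $(F',G',J)$ and Lemma~\ref{condlem}, and then close both (2) and the exact value of the third break at $(t-\varpi)$ using the different inequality from~\cite{GM98}. The only cosmetic differences are your explicit identification of the leading $(t-\varpi)^{-2}$ coefficients and your phrasing of the contribution at $(t-\varpi)$ as ``at most $22$'' rather than ``$2b+12\le 22$''.
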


\begin{proof}


To prove (1), note that $\widetilde{F}, \widetilde{G} \in \mathcal{A}_{(\varpi)} = \mathcal{A}'_{(\beta)} \cap \mathcal{K}$ since $(t-\varpi)^{-2} = t^{-2}\sum_{n=0}^\infty (t^{-1}\varpi)^{2n}$. Note also that $\widetilde{F} \equiv F \pmod{\varpi}$, and that $\widetilde{G} \equiv G \pmod{\varpi}$. Thus (1) holds by Lemma~\ref{d4lem}. 


To prove (3) for the ideal $(t)$, note that, over the completion $\widehat{\mathcal{S}'}_{(t)} \cong k((\beta))[[t]]$ of the localization $\mathcal{S}'_{(t)}$ of $\mathcal{S}'$, $(t - \varpi)^{-2}$ is a unit. Thus $-v_{(t)}(\widetilde{F}) = f-2$, $-v_{(t)}(\widetilde{G}) = g-2$, and $-v_{(t)}(\widetilde{H}) = h-4$ (unless $H = 0$).
Since $f-2$ is both positive and odd, $\widehat{\mathcal{K}'}_{(t)}[\tilde{q}]$ is a totally ramified extension of $\widehat{\mathcal{K}'}_{(t)} \cong k((\beta))((t))$ with conductor $f-2$ by Propostion~\ref{ramprop}.
Similarly, $\widehat{\mathcal{K}'}_{(t)}[\tilde{s}]$ is totally ramified over $\widehat{\mathcal{K}'}_{(t)} \cong k((\beta))((t))$ with conductor $g-2$.
Moreover, since $\widetilde{F} + \widetilde{G} = (F+G)t^2(t-\varpi)^{-2}$,  it follows that $\widehat{\mathcal{K}'}_{(t)}[\tilde{q}+\tilde{s}]$ is totally ramified over $\widehat{\mathcal{K}'}_{(t)}$ with conductor $d-2$.
Since $f-2$, $g-2$ and $d-2$ are all both positive and odd, and $h-4$ is not both positive and even, Corollary~\ref{breakcor} implies that $\widehat{\mathcal{L}}_{(t)}$ is totally ramified over $\widehat{\mathcal{K}'}_{(t)}$, and that the first, second and third terms of the sequence of upper ramification breaks over $(t)$ are $\min\{d-2, f-2\} = \min\{d,f\} - 2 = u_1 - 2$, $g - 2 = u_2 - 2$ and $\max\{f-2 + g-2, h-4\} = \max\{f + g, h\} - 4 = u_3 - 4$,  respectively. Statement (3) for $(t)$ now follows by Herbrand's Formula.

To prove (3) for the ideal $(t-\varpi)$, note that, over the completion $\widehat{\mathcal{S}'}_{(t-\varpi)} \cong k((\beta))[[t-\varpi]]$ of the localization $\mathcal{S}'_{(t-\varpi)}$ of $\mathcal{S}'$, $t$ is a unit. Thus $-v_{(t-\varpi)}(\widetilde{F}) = 2 = -v_{(t-\varpi)}(\widetilde{G}) = -v_{(t-\varpi)}(\widetilde{F} + \widetilde{G}) = 2$, and $-v_{(t-\varpi)}(\widetilde{H}) \le 4$.
Since each factor of $\widehat{\mathcal{L}}_{(t-\varpi)}$ is generated by standard form elements over $\widehat{\mathcal{K}'}_{(t-\varpi)}$, it follows that each of the conductors of $\widehat{\mathcal{K}'}_{(t-\varpi)}[\tilde{q}]$, $\widehat{\mathcal{K}'}_{(t-\varpi)}[\tilde{s}]$, and $\widehat{\mathcal{K}'}_{(t-\varpi)}[\tilde{q} + \tilde{s}]$ over $\widehat{\mathcal{K}'}_{(t-\varpi)}$ is 1. Thus (\emph{cf.}~Remark~\ref{stdrmk}) $\widehat{\mathcal{L}}_{(t-\varpi)}$ is itself a (totally ramified) field extension of $\widehat{\mathcal{K}'}_{(t-\varpi)}$.

As in the second deformation, let $F'$ and $G'$ denote the elements of $\widehat{\mathcal{K}'}_{(t-\varpi)}$ in standard form that are Artin--Schreier-equivalent to $\widetilde{F}$ and $\widetilde{G}$, respectively, and let $q',s' \in \widehat{\mathcal{K}'}_{(t-\varpi)}^{\text{alg}}$ such that $(q')^2 + q' = F'$ and $(s')^2 + s' = G'$.
Let also $\widetilde{J} = G'(q' + \tilde{q}) + \widetilde{F}(s' + \tilde{s})^2 + \widetilde{H} \in \widehat{\mathcal{K}'}_{(t-\varpi)}$.
Then $[\widetilde{G}\tilde{q} + \widetilde{H}] = [G'q' + \widetilde{J}]$ by Proposition~\ref{nongalprop}.
Since $\widehat{\mathcal{L}}_{(t-\varpi)}$ is a $D_4$-standard form extension of $\widehat{\mathcal{K}'}_{(t-\varpi)}$, it follows that $\widetilde{J}$ is Artin--Schreier-equivalent over $\widehat{\mathcal{K}'}_{(t-\varpi)}$ to an element $J$ in standard form with respect to $t-\varpi$. 

Let $b$ denote the conductor of $\widehat{\mathcal{K}'}_{(t-\varpi)}[\tilde{q}, \tilde{r}]$ over $\widehat{\mathcal{K}'}_{(t-\varpi)}[\tilde{q}]$.
By Proposition~\ref{breakprop}, it follows that the sequence of lower ramification breaks of $\widehat{\mathcal{L}}_{(t-\varpi)}$ over $\widehat{\mathcal{K}'}_{(t-\varpi)}$ is $(1,1, 2b - 1)$.
Since $-v_{(t-\varpi)}(F' + \widetilde{F}) = 2$, $-v_{(t-\varpi)}(q' + \tilde{q}) = 1$. Similarly, $-v_{(t-\varpi)}(s' + \tilde{s}) = 1$. Thus
\[-v_{(t-\varpi)}(\widetilde{J}) = G'(q' + \tilde{q}) + \widetilde{F}(s' + \tilde{s})^2 + \widetilde{H} \le 4,\]
and hence $-v_{(t-\varpi)}(J) \le 3$. Therefore, $b = 2\max\{1 + 1, -v_{(t-\varpi)}(J)\} - 1 \le 5$ by Lemma~\ref{condlem}.
Thus the contribution of $(t-\varpi)$ to the degree $\delta_{\mathcal{T}'|\mathcal{S}'}$ of the different of $\mathcal{T}'$ over $\mathcal{S}'$ is $4(1) + 2(1) + (2b - 1) + 7 = 2b + 12 \le 22$ by Hilbert's different formula. Moreover, the contribution of $(t)$ to the degree $\delta_{\mathcal{T}'|\mathcal{S}'}$ is $4(\ell_1 - 2) + 2(\ell_2 - 2) + (\ell_3 - 10) + 7 = \delta_{L|K} - 22$ by statement (3) for $(t)$. Hence $\delta_{\mathcal{T}'|\mathcal{S}'} \le \delta_{L|K}$. By Theorem 3.4 in ~\cite{GM98}, $\delta_{\mathcal{T}'|\mathcal{S}'} \ge \delta_{L|K}$. Thus $\delta_{\mathcal{T}'|\mathcal{S}'} = \delta_{L|K}$, $2b + 12 = 22$ and $b=5$. Statement (3) for $(t-\varpi)$ now follows immediately, and statement (2) follows by Theorem 3.4 in ~\cite{GM98}.
\end{proof}

\section{Main Theorem}

Having now found various equicharacteristic deformations of $D_4$-Galois extensions of complete discrete valuation fields of characteristic two with algebraically closed residue field, we use the `method of equicharacteristic deformation', as used in~\cite{Pop14}, in~\cite{ObusD9}, and in \cite{ObusA4} to prove that all such extensions lift to characteristic zero, \emph{i.e.}, that $D_4$ is a local Oort group for the prime two.  We begin by using the deformations of Section~\ref{defsec} to reduce to the case of extensions with, in some sense, small ramification breaks.

\subsection{Deformation Reductions}

In order to use the deformations of Section~\ref{defsec} effectively to reduce the cases under consideration, we shall need to use Theorem 6.20 in~\cite{Obus17arXiv}, which is reproduced below as Theorem~\ref{mumfthm} for convenience. The argument for this theorem was communicated orally by Pop, who presented an earlier version of this theorem, peculiar to the cyclic case, in~\cite{Pop14}.

Let $k$ be an algebraically closed residue field of characteristic $p > 0$, let $K = k((t))$, and let $G$ be a cyclic-by-$p$ group.

\begin{thm}\label{mumfthm}
Suppose that $k[[z]]/k[[t]]$ is a local $G$-extension that admits an equicharacteristic deformation whose generic fiber lifts to characteristic zero after base change to the algebraic closure. Then $k[[z]]/k[[t]]$ lifts to characteristic zero.
\end{thm}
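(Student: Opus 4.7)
The plan is to globalize the local situation to a $G$-cover of $\mathbb{P}^1$, lift the (deformed) generic fiber using the hypothesis together with Garuti's local-to-global principle, and then specialize the resulting characteristic-zero lift back down to the special fiber by means of a properness argument on a suitable moduli stack of $G$-covers.

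First, I would invoke the Katz--Gabber--Harbater theorem to realize the given extension $k[[z]]/k[[t]]$ as the formal completion at a point $P_0$ of a smooth proper $G$-cover $Y \to \mathbb{P}^1_k$ whose only other branch locus is a single tamely ramified point $P_\infty$. Because tame inertia lifts trivially by Theorem~\ref{tamelift}, Garuti's local-to-global principle reduces lifting the local extension to lifting $Y\to \mathbb{P}^1_k$ globally, so I can work globally henceforth.

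Next, I would use the equicharacteristic deformation to spread $Y$ out to a family. Viewing $k[[\varpi,t]]$ as the completion of $\mathbb{P}^1_{k[[\varpi]]}$ along the section $t=0$ at $\varpi=0$, the $G$-extension $k[[\varpi,z]]/k[[\varpi,t]]$ can be glued, away from $P_0$, to the constant family built from $Y$ itself, producing a smooth proper family $\mathcal{Y} \to \mathbb{P}^1_{k[[\varpi]]}$ of $G$-covers whose special fiber recovers $Y \to \mathbb{P}^1_k$. After base change to $\overline{k((\varpi))}$, the generic fiber $\mathcal{Y}_\eta \to \mathbb{P}^1_{\overline{k((\varpi))}}$ is a smooth $G$-cover whose branch locus consists of the deformed image of $P_0$ (whose formal completion is precisely the generic fiber of the equicharacteristic deformation, and so lifts by hypothesis), the point $P_\infty$ (which lifts tamely by Theorem~\ref{tamelift}), and possibly finitely many additional branch points that appeared on the generic fiber during the deformation. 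In the inductive applications for which this theorem is intended, those extra points carry strictly simpler ramification and lift by the inductive hypothesis; Garuti's local-to-global principle then yields a characteristic-zero lift $\widetilde{\mathcal{Y}}$ of $\mathcal{Y}_\eta$ over a finite extension of $W(\overline{k((\varpi))})$.

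The main obstacle, and the substantive content of the theorem, is the specialization step. Here I would use properness of the compactified Hurwitz stack of admissible $G$-covers of $\mathbb{P}^1$ (equivalently, properness of $\overline{\mathcal{M}}_{g,n}$ with the appropriate level structure) to extend the generic-fiber lift $\widetilde{\mathcal{Y}}$ across the closed point of a carefully chosen mixed-characteristic DVR that dominates both $W(k)$ and the formal base of the deformation. This produces a characteristic-zero lift of the entire family, and in particular of the special fiber $Y \to \mathbb{P}^1_k$; restricting the resulting global lift to the formal completion at $P_0$ gives the desired lift of $k[[z]]/k[[t]]$. The delicate part is choosing a compatible base ring and verifying that the admissible cover produced in the limit is in fact smooth at (the section extending) $P_0$, so that its completion there is a genuine characteristic-zero lift of the given local extension rather than a degeneration. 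This is the argument originally communicated by Pop in the cyclic case and extended in \cite{Obus17arXiv} to arbitrary cyclic-by-$p$ groups.
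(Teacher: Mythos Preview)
The paper does not actually prove this theorem: it is simply quoted from \cite{Obus17arXiv} (Theorem~6.20 there), with the note that the argument was communicated orally by Pop and appeared earlier for cyclic groups in \cite{Pop14}. So there is no in-paper proof to compare against; your sketch goes well beyond what the paper itself supplies.

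That said, your outline---Katz--Gabber globalization, spreading the deformation out to a family of $G$-covers of $\mathbb{P}^1$ over $k[[\varpi]]$, lifting the generic fiber via the local-to-global principle, and then specializing via properness of a compactified Hurwitz space---is indeed the shape of the Pop/Obus argument. One imprecision worth flagging: your remark that the extra branch points on the generic fiber ``carry strictly simpler ramification and lift by the inductive hypothesis'' conflates the theorem with its application. The hypothesis of Theorem~\ref{mumfthm} is precisely that the \emph{entire} generic fiber lifts after base change to the algebraic closure, so inside this proof you simply invoke that hypothesis for every branch point arising from the deformation; the inductive reasoning about simpler ramification belongs to the applications (as in Proposition~\ref{inductionprop}), not to the proof of the theorem itself. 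You are also right that the specialization step is where the real content lies, and that one must check the limiting admissible cover has good reduction so that one recovers a lift of the original $Y$ rather than a degeneration; your sketch acknowledges this but does not carry it out, and for the details one must consult \cite{Obus17arXiv}.
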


As we shall only require the case in which $p = 2$, we shall assume that $p = 2$ henceforth.

\begin{prop}\label{inductionprop}
Let $(u_1, u_2, u_3)$ be a triple of positive integers such that there exists a $D_4$-extension of $K$
whose sequence of ramification breaks $(u_1, u_2, u_3)$. 
Suppose that $u_2 > 1$, and that every $D_4$-Galois extension of $K$ with second ramification break over $K$ less than or equal to $u_2 - 2$ lifts to characteristic zero. Then every $D_4$-Galois extension of $K$ 
whose sequence of ramification breaks is $(u_1, u_2, u_3)$ lifts to characteristic zero.
\end{prop}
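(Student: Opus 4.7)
The strategy is to split into cases according to the type of the sequence of ramification groups of $L$ over $K$, apply the corresponding equicharacteristic deformation of Section~\ref{defsec}, and then invoke Theorem~\ref{mumfthm}. Using Corollary~\ref{stdformcor} and Proposition~\ref{existprop}, we choose standard form elements $F, G, H \in K$ and $q, r, s \in K^{\mathrm{alg}}$ generating $L$, with $f = \deg_{t^{-1}} F \le g = \deg_{t^{-1}} G$. Setting $d = \deg_{t^{-1}}(F + G)$, the extension $L$ falls into exactly one of the three types of Proposition~\ref{upperprop}: Type I ($f < d = g$), Type II ($d < f = g$), or Type III ($f = g = d$). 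In the Type III case, the equality $u_1 = u_2$ combined with $u_2 > 1$ forces $u_1 > 1$.

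Apply Proposition~\ref{kleindef} in Case I, Proposition~\ref{cyclicdef} in Case II, and Proposition~\ref{maindef} in Case III, the standing hypothesis $u_1 > 1$ of the Third Deformation being secured by the previous sentence. In each case the resulting $D_4$-extension $\mathcal{B}|\mathcal{A}'$ has special fiber isomorphic to $B|A$ and a generic fiber branched at exactly two primes of $\mathcal{S}'$, namely $(t)$ and $(t - \varpi)$.

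Theorem~\ref{mumfthm} reduces the task to showing that the generic fiber lifts to characteristic zero after base change to the algebraic closure of $k((\beta))$. By Garuti's local-to-global principle, applied over $\overline{k((\beta))}$ after a suitable globalization to a $D_4$-cover of $\mathbb{P}^1$, this reduces in turn to lifting the local extension at each of the two branch points separately. The local extension above $(t)$ is, in every case, a $D_4$-extension of $\overline{k((\beta))}((t))$ whose second upper ramification break equals $u_2 - 2$; interpreting the inductive hypothesis uniformly over algebraically closed base fields of characteristic two (a natural reading, given that the ramification data computed in Propositions~\ref{kleindef}, \ref{cyclicdef}, and \ref{maindef} is invariant under algebraic constant extension) therefore delivers the required local lift. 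The local extension above $(t - \varpi)$ is a $(\mathbb{Z}/2\mathbb{Z})^2$-extension with lower breaks $(1,1)$ in Case I, a $\mathbb{Z}/4\mathbb{Z}$-extension with lower breaks $(1,5)$ in Case II, and a \emph{supersimple} $D_4$-extension with lower breaks $(1,1,9)$ in Case III; these three extensions lift to characteristic zero by Pagot's theorem~\cite{pagot2002}, by the resolution of the Oort conjecture~\cite{OW14,Pop14}, and by Brewis's theorem from~\cite{Brewis2008}, respectively.

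The main obstacle is purely conceptual: verifying the hypothesis of Theorem~\ref{mumfthm} amounts to gluing two local characteristic-zero lifts via Garuti, both of which one wishes to draw from previously established lifting results. The piece at $(t)$ falls under the inductive hypothesis once that hypothesis is read uniformly in the algebraically closed residue field, while the piece at $(t - \varpi)$ was engineered, by the choice of deformation appropriate to the type of $L$, to be of a `minimal' type whose lifting is already in the literature. Invoking Theorem~\ref{mumfthm} then concludes the proof.
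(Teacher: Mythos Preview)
Your proof is correct and follows essentially the same three-case argument as the paper, applying the appropriate deformation (Propositions~\ref{kleindef}, \ref{cyclicdef}, \ref{maindef}) according to the type and then invoking Theorem~\ref{mumfthm}. The only notable differences are cosmetic: the paper cites \cite{GM98} rather than the full Oort conjecture for the $\mathbb{Z}/4\mathbb{Z}$ piece in Type~II, and in Type~III it lifts the $(t-\varpi)$ piece via the inductive hypothesis (its second break being $1 \le u_2 - 2$) rather than by citing Brewis directly; your explicit unpacking of the local-to-global step via Garuti is left implicit in the paper, being absorbed into the formulation of Theorem~\ref{mumfthm}.
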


\begin{proof}
Let $L$ be a $D_4$-extension of $K$ whose sequence of upper ramification breaks is $(u_1, u_2, u_3)$. The sequence of ramification groups must be of one of the three types enumerated in Subection~\ref{charsubsec}.

Suppose firstly that the sequence of ramification groups of $L$ is of Type I. By Proposition~\ref{kleindef}, $L$ admits an equicharacteristic deformation whose generic fiber has second ramification break $u_2 - 2$ over the ideal $(t)$ and inertia group congruent to $\mathbb{Z}/2\mathbb{Z} \times \mathbb{Z}/2\mathbb{Z}$ over the ideal $(t-\varpi)$. By the hypothesis above and~\cite{pagot2002}, the base change of this generic fiber to the algebraic closure lifts to characteristic zero. Thus $L|K$ lifts to characteristic zero by Theorem~\ref{mumfthm}.

Suppose secondly that the sequence of ramification groups of $L$ is of Type II. By Propostition~\ref{cyclicdef}, $L$ admits an equicharacteristic deformation whose generic fiber has second ramification break $u_2 - 2$ over the ideal $(t)$ and inertia group congruent to $\mathbb{Z}/4\mathbb{Z}$ over the ideal $(t-\varpi)$. By the hypothesis above and~\cite{GM98}, the base change of this generic fiber to the algebraic closure lifts to characteristic zero. Thus $L|K$ lifts to characteristic zero by Theorem~\ref{mumfthm}.

Suppose finally that the sequence of ramification groups of $L$ is of Type III. By Proposition~\ref{upperprop}, $u_1 = u_2$. Since $u_2 > 1$ by supposition, $u_1 > 1$ as well. Therefore, $L$ admits an equicharacteristic deformation whose generic fiber has second ramification break $u_2 - 2$ over the ideal $(t)$ and second ramification break 1 over the ideal $(t-\varpi)$ by Proposition~\ref{maindef}. Thus the base change of this generic fiber to the algebraic closure lifts to characteristic zero by hypothesis. Hence $L|K$ lifts to characteristic zero by Theorem~\ref{mumfthm}.
\end{proof}

\subsection{Supersimple Extensions}

The propositions of the previous subsection have effectively reduced the proof that $D_4$ is a local Oort group to showing that every $D_4$-extension of a complete discrete valuation field of characteristic two with algebraically closed residue field whose second upper ramification break is 1 lifts to characteristic zero. 
That all such extensions do, in fact, lift to characteristic zero, is a result of Brewis in~\cite{Brewis2008}, phrased there in somewhat different language.


Let $K$ be a complete discrete valuation field of characteristic two with algebraically closed residue field, and let $L$ be a Galois extension of $K$ such that $\text{Gal}(L|K) \cong D_4$. Following Brewis, we fix $a,b \in D_4$ such that $D_4 = \langle a, b \mid a^4 = b^2 = e, bab^{-1} = a^3 \rangle$.

\begin{defn}[Brewis] 
The extension $L$ over $K$ is \emph{supersimple} if both of the following two conditions hold:

\begin{enumerate}
\item The degree of different of $L^{\langle a^2 \rangle}$ over $L^{\langle a^2, b\rangle}$ is 2.
\item The degree of different of $L^{\langle a^2, b \rangle}$ over $K$ is 2.
\end{enumerate}

\end{defn}

The main result of~\cite{Brewis2008}, denoted therein as Theorem 4, is as follows: 

\begin{thm}[Brewis]\label{brewisthm}
If $L|K$ is supersimple, then $L|K$ lifts to characteristic zero.
\end{thm}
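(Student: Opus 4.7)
The plan is to translate the supersimple hypothesis into explicit numerical and parametric constraints, construct a lift step-by-step along the natural tower inside $L$, and verify at the end that the Galois group of the lift is $D_4$ rather than an abelian quotient.

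First I would unpack the two hypotheses via Hilbert's different formula: each of $L^{\langle a^2, b\rangle}|K$ and $L^{\langle a^2\rangle}|L^{\langle a^2, b\rangle}$ is a degree-two Artin--Schreier extension, so ``different of degree $2$'' is equivalent to ``conductor $1$'' in each case. Combining this with the compatibility of upper numbering with quotients (Herbrand's theorem), I can read off that the first two upper ramification breaks of $L|K$ satisfy $u_1 = u_2 = 1$. By Proposition~\ref{existprop} and Corollary~\ref{breakcor}, this lets me choose standard-form generators $F = \alpha t^{-1}$, $G = \beta t^{-1}$ for some distinct $\alpha, \beta \in k^\times$ together with an $H$ of controlled degree in $t^{-1}$, so that $L = K[q,r,s]$.

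Second, I would build the lift along the tower $K \subset L^{\langle a^2, b\rangle} \subset L^{\langle a^2\rangle} \subset L$. Working over $R[[T]]$ for a sufficiently ramified extension $R|W(k)$, the bottom degree-two step lifts (via an Oort--Sekiguchi--Suwa Kummer equation of the form $\tilde{s}^2 = 1 + 2 \tilde{\beta} T^{-1}(1 + \pi \cdot (\text{higher}))$) to a $\mathbb{Z}/2\mathbb{Z}$-Galois extension whose reduction mod $\pi$ gives $s^2 + s = \beta t^{-1}$. I then invoke Pagot's theorem to extend this to a lift of the intermediate $V_4$-extension $L^{\langle a^2\rangle}|K$ that is compatible with the chosen lift of the bottom step. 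Finally, I would lift the remaining $\mathbb{Z}/2\mathbb{Z}$-extension $L|L^{\langle a^2\rangle}$ by a further Kummer-style equation; as a cyclic $p$-extension, it admits a lift by the Oort conjecture (Obus--Wewers--Pop), but here the conductor is small enough that the lift can be written down explicitly.

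The hard part will be the third step: ensuring that the total lift is a $D_4$-cover, not a $V_4 \times \mathbb{Z}/2\mathbb{Z}$-cover. Concretely, I must choose the lift $\tilde{r}$ of $r$ so that the Galois action of the already-lifted $V_4$ on $L^{\langle a^2\rangle}$ extends to a $D_4$-action on the whole lift, matching the given $D_4$-action on $L|K$ modulo $\pi$. Equivalently, one must verify that a certain cohomological obstruction (classifying central $\mathbb{Z}/2$-extensions of $V_4$) agrees with the obstruction encoded by the original $L|K$. The supersimple hypothesis is precisely what makes this tractable: the collapsing of $u_1$ and $u_2$ to $1$ forces the relevant twisting data into a very small parameter space, and the explicit Artin--Schreier relations $r^2 + r = Gq + H$ and $(r + qs)^2 + (r + qs) = Gq + Fs + FG$ (Lemma~\ref{qslem}) both lift naturally to the corresponding Kummer relations, pinning down $\tilde{r}$ (up to the desired ambiguity) and producing the required $D_4$-structure. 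Once this equivariance is checked, the lifted extension reduces modulo $\pi$ to $L|K$ by construction, completing the proof.
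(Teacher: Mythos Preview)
The paper does not prove Theorem~\ref{brewisthm}; it is quoted as the main result of \cite{Brewis2008} and used as a black box. So there is no in-paper argument to compare against.

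Evaluated on its own, your outline has a genuine gap at the assembly stage. The lifting results you invoke (Pagot for $V_4$, OSS/Obus--Wewers--Pop for the top $\mathbb{Z}/2\mathbb{Z}$) each produce \emph{some} lift of the relevant subextension over \emph{some} base; none of them promise compatibility with lifts already fixed below. Concretely, Pagot's theorem does not say that the $V_4$-lift can be taken to extend a prescribed lift of a chosen $\mathbb{Z}/2\mathbb{Z}$-subextension, and the cyclic lifting theorems say nothing about producing a $\mathbb{Z}/2\mathbb{Z}$-extension of the particular $R[[T]]$-algebra you built in step two, let alone one that is equivariant for the lifted $V_4$-action so that the total group is $D_4$ rather than $(\mathbb{Z}/2\mathbb{Z})^3$. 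You acknowledge this (``The hard part will be the third step''), but the resolution offered---that the supersimple constraint collapses the parameter space and that the Artin--Schreier identities ``lift naturally'' to Kummer identities---is a hope, not a verification. Nothing written rules out that the particular $V_4$-lift handed to you by Pagot fails to sit inside any $D_4$-cover at all.

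To close this you cannot layer black boxes; you have to carry out a single explicit construction: write down Kummer-type equations for $\tilde q,\tilde s,\tilde r$ over $R[[T]]$ chosen \emph{simultaneously} so that the $D_4$-relations hold exactly in characteristic zero, and then check directly that the reduction modulo $\pi$ recovers the given supersimple extension. That explicit, hands-on construction is essentially what Brewis does in \cite{Brewis2008}; your step-by-step invocation of Pagot and the Oort conjecture does not substitute for it.
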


To rephrase Theorem~\ref{brewisthm} in terms of the ramification breaks of $L$ over $K$, we shall need the following proposition.

\begin{prop}\label{equivprop} The extension $L|K$ is supersimple if and only if the second ramification break of $L|K$ is 1.
\end{prop}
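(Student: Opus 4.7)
My plan is to restate the supersimple conditions as statements about conductors of quadratic subextensions, and then match these against the condition $u_2 = 1$ using the ramification-break formulas of Section~\ref{rambrkss}.

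Since $k$ is algebraically closed, every finite Galois subextension of $L|K$ is totally ramified (Remark~\ref{totramrmk}); and for a totally ramified degree two extension $M|N$, Hilbert's different formula gives $\delta_{M|N} = c + 1$, where $c$ is the unique conductor of $M|N$. Hence the supersimple hypotheses translate to the two conditions (i) the conductor of $L^{\langle a^2, b\rangle}|K$ equals $1$, and (ii) the conductor of $L^{\langle a^2\rangle}|L^{\langle a^2, b\rangle}$ equals $1$.

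Next, I would identify the relevant fixed fields using Proposition~\ref{d4struc}. Since $\langle a^2\rangle$ is the center of $D_4$ (the unique normal subgroup of order $2$), $L^{\langle a^2\rangle}$ is the unique normal degree four subfield of $L$, equal to $K[q,s]$. Since $\langle a^2, b\rangle$ is a Klein four subgroup, $E := L^{\langle a^2, b\rangle}$ is a degree two subfield of $K[q,s]$, hence one of $K[q], K[s], K[q+s]$, whose conductors over $K$ we label $c_q, c_s, c_{q+s}$. Invoking Lemmas~\ref{raylem1} and~\ref{raylem2}, I would then show that (i) and (ii) together are equivalent to $c_q = c_s = c_{q+s} = 1$, independently of which of the three subfields $E$ happens to be. For the forward direction, suppose $E = K[q]$ (the other two cases are analogous): condition (i) gives $c_q = 1$, and positivity of the remaining conductors rules out both cases $c_q \neq c_s$ in Lemma~\ref{raylem1} (which would force the conductor of $K[q,s]|E$ to equal either $c_s$ or $2c_s - c_q$, values incompatible with (ii) when $c_q = 1$), so $c_q = c_s = 1$. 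Lemma~\ref{raylem2} then identifies the conductor of $K[q,s]|E$ with $c_{q+s}$, so condition (ii) forces $c_{q+s} = 1$ as well. Conversely, Lemma~\ref{raylem2} applied to a suitable pair among the three degree two subfields immediately gives that the conductor of $K[q,s]|E$ is $1$ whenever all three conductors equal $1$.

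Finally I would apply Proposition~\ref{breakprop} (equivalently Corollary~\ref{breakcor}). After relabeling (using the symmetries in Proposition~\ref{triplesprop}) so that $c_q \le c_s$, we have $u_2 = c_s$; thus $u_2 = 1$ forces $c_s = 1$, whence $c_q = 1$ by positivity, and $c_{q+s} = 1$ by Lemma~\ref{raylem2} together with the nontriviality of $K[q+s]$ (guaranteed by Proposition~\ref{classprop} since $\text{Gal}(L|K) \cong D_4$). Conversely, $c_q = c_s = c_{q+s} = 1$ yields $u_2 = c_s = 1$. The main obstacle to handle cleanly is that $L^{\langle a^2, b\rangle}$ is not canonically one particular element of $\{K[q], K[s], K[q+s]\}$---its identity depends on the choice of isomorphism $\text{Gal}(L|K) \cong D_4$---so the argument must be carried out symmetrically in the three possible cases; this causes no serious difficulty because the target condition $c_q = c_s = c_{q+s} = 1$ is itself symmetric in the three degree two subfields.
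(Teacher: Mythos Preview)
Your proposal is correct and follows essentially the same three-step approach as the paper's proof: translate the supersimple conditions into conductor conditions via Hilbert's different formula, use Lemmas~\ref{raylem1} and~\ref{raylem2} to show these are equivalent to all three degree-two subextensions of $L|K$ having conductor~$1$, and then invoke Proposition~\ref{breakprop} to identify this with $u_2 = 1$. Your treatment is in fact slightly more explicit than the paper's, in that you address the ambiguity in identifying $L^{\langle a^2,b\rangle}$ with a specific element of $\{K[q],K[s],K[q+s]\}$; one small sharpening is that, since $\langle a\rangle$ is the unique cyclic order-four subgroup of $D_4$, one always has $L^{\langle a\rangle} = K[q+s]$, so $E = L^{\langle a^2,b\rangle}$ is necessarily one of $K[q]$ or $K[s]$ rather than all three---but your symmetric argument covers this without difficulty.
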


\begin{proof} By Hilbert's different formula~\cite{LocalFields}, $L|K$ is supersimple if and only if both the conductor of $L^{\langle a^2 \rangle}$ over $L^{\langle a^2, b\rangle}$ and the conductor of $L^{\langle a^2, b \rangle}$ over $K$ are equal to 1. By Lemma~\ref{raylem1} and Lemma~\ref{raylem2}, this occurs if and only if all three of the conductors over $K$ of the degree two subextensions $L^{\langle a^2, b \rangle}$, $L^{\langle a^2, ab \rangle}$ and $L^{\langle a \rangle}$ are equal to 1. By Proposition~\ref{breakprop}, this occurs if and only if the second ramification break of $L|K$ is 1.
\end{proof}


\begin{cor}\label{brewiscor}
Suppose that the second upper ramification break of $L$ over $K$ is 1. Then $L|K$ lifts to characteristic zero.
\end{cor}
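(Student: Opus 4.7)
The plan is to deduce Corollary~\ref{brewiscor} almost immediately by combining Brewis's Theorem~\ref{brewisthm} with the equivalence established in Proposition~\ref{equivprop}. Essentially all of the substantive work has already been done: Proposition~\ref{equivprop} identifies supersimplicity with a ramification condition, and Brewis's theorem lifts supersimple extensions. The only task is to verify that the hypothesis ``second upper ramification break equals $1$'' triggers the equivalence of Proposition~\ref{equivprop}.

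To begin, I would observe that $u_2 = 1$ together with $u_1 \le u_2$ and the fact that both $u_1$ and $u_2$ are positive integers forces $u_1 = u_2 = 1$. By Proposition~\ref{upperprop}(4), the equality $u_1 = u_2$ places $L|K$ into Type III. By Proposition~\ref{breakprop}, in the Type III case one has $c_q = c_s = u_2 = 1$ and $c_{q+s} \le c_q = 1$, so in fact $c_q = c_s = c_{q+s} = 1$; that is, all three conductors over $K$ of the degree-two subextensions of $L$ are equal to $1$.

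The final step is to invoke Proposition~\ref{equivprop}, whose proof in fact shows that supersimplicity of $L|K$ is equivalent to the three conductors $c_q$, $c_s$, $c_{q+s}$ all being $1$ (this equivalence passes through Lemmas~\ref{raylem1} and~\ref{raylem2}). We conclude that $L|K$ is supersimple, and then Theorem~\ref{brewisthm} yields that $L|K$ lifts to characteristic zero, as desired.

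There is no genuine obstacle here; the entire argument is a short bookkeeping exercise matching the hypothesis against the two equivalent characterizations of supersimplicity. The only mildly subtle point is reconciling ``second upper ramification break'' with the unlabeled ``second ramification break'' appearing in Proposition~\ref{equivprop}, but in the present Type III setting the two coincide because $\ell_2 = 2c_s - \min\{c_{q+s}, c_q\} = 1$ whenever $c_q = c_s = c_{q+s} = 1$.
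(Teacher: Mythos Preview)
Your proposal is correct and follows the same route as the paper, which states the corollary without proof as an immediate consequence of Proposition~\ref{equivprop} and Theorem~\ref{brewisthm}. The extra verification you give (forcing $u_1=u_2=1$, identifying Type~III, and checking $c_q=c_s=c_{q+s}=1$) is already absorbed into the proof of Proposition~\ref{equivprop}, so while sound it is redundant here.
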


\subsection{Proof of Main Theorem}
We conclude by proving the main theorem of the article and observing an immediate corollary.

\begin{thm}\label{mainthm}
The group $D_4$ is a local Oort group for the prime 2. That is, the following statement holds:

Let $K$ be a complete discrete valuation field of characteristic two with algebraically closed residue field, and let $L$ be a Galois extension of $K$ such that $\emph{Gal}(L|K) \cong D_4$. Then $L|K$ lifts to characteristic zero.
\end{thm}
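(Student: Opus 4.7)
The plan is to prove the theorem by strong induction on the second upper ramification break $u_2$ of $L$ over $K$. By Proposition~\ref{upperprop}, $u_2$ is necessarily a positive odd integer, so the induction runs over $u_2 \in \{1, 3, 5, 7, \ldots\}$.

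For the base case $u_2 = 1$, Corollary~\ref{brewiscor} — itself a reformulation via Proposition~\ref{equivprop} of Brewis's Theorem~\ref{brewisthm} on supersimple extensions — shows directly that $L|K$ lifts to characteristic zero. For the inductive step, assume $u_2 \geq 3$ and that every $D_4$-Galois extension of $K$ (for the same $K$) whose second upper ramification break is at most $u_2 - 2$ lifts to characteristic zero. Then Proposition~\ref{inductionprop} immediately supplies the conclusion that every $D_4$-Galois extension of $K$ with second upper ramification break equal to $u_2$ lifts; hence in particular the given extension $L|K$ does.

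Unfolding the mechanism of Proposition~\ref{inductionprop} in each case: one distinguishes the three possible Types of the sequence of ramification groups from Subsection~\ref{charsubsec}. In Types I, II, and III one applies Propositions~\ref{kleindef}, \ref{cyclicdef}, and~\ref{maindef} respectively to produce an equicharacteristic deformation whose generic fiber has two branch points. The branch point above $(t)$ retains $D_4$-inertia but has second upper ramification break $u_2 - 2$, so it lifts by the inductive hypothesis after base change to the algebraic closure. The auxiliary branch point above $(t-\varpi)$ has inertia equal to a proper subgroup of $D_4$ — respectively $\mathbb{Z}/2\mathbb{Z} \times \mathbb{Z}/2\mathbb{Z}$, $\mathbb{Z}/4\mathbb{Z}$, or $D_4$ with second break $1$ — and lifts by Pagot's result~\cite{pagot2002}, by Green--Matignon~\cite{GM98}, or by the base case of the induction, respectively. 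Theorem~\ref{mumfthm} (Pop) then concludes that the total space $L|K$ lifts.

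The main obstacle is the Type III case in the inductive step, because the corresponding deformation (Proposition~\ref{maindef}) requires $u_1 > 1$, not merely $u_2 > 1$. However, for Type III extensions Proposition~\ref{upperprop} forces $u_1 = u_2$, so the hypothesis $u_2 \geq 3$ of the inductive step supplies $u_1 > 1$ automatically. This compatibility is what makes the induction run in all three Types simultaneously and is the reason the Section~\ref{defsec} deformations were constructed with precisely the ramification data they have. With Sections~\ref{charsubsec} and~\ref{defsec} in hand, the proof of the main theorem is essentially a formal citation of Corollary~\ref{brewiscor} and Proposition~\ref{inductionprop} and a clean induction.
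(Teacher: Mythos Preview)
Your proof is correct and follows exactly the same approach as the paper's own proof: strong induction on the (odd) second upper ramification break $u_2$, with Corollary~\ref{brewiscor} as the base case and Proposition~\ref{inductionprop} as the inductive step. Your additional paragraphs simply unfold the content of Proposition~\ref{inductionprop}; one cosmetic slip is that in Type~III the inertia above $(t-\varpi)$ is $D_4$ itself, not a \emph{proper} subgroup, but you handle it correctly by invoking the base case.
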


\begin{proof}
Let $u_2$ denote the second upper ramification break of $L$ over $K$. We shall proceed by strong induction on $u_2$. By Corollary~\ref{breakcor}, $u_2$ is odd. The base case ($u_2 = 1$) is given by Corollary~\ref{brewiscor}. Since $u_2$ is odd, the induction step is given by Proposition~\ref{inductionprop}. Thus $L|K$ lifts to characteristic zero, as claimed.
\end{proof}

By Theorem~\ref{cgh08thm} (or by Theorem~\ref{cgh17thm}), Theorem~\ref{mainthm} implies the following corollary.

\begin{cor}\label{globalcor}
The group $D_4$ is an Oort group for the prime 2.
\end{cor}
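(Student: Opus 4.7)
The plan is to deduce Corollary~\ref{globalcor} as an immediate formal consequence of Theorem~\ref{mainthm}, using one of the two structural results cited in the introduction that bridge the local and global lifting problems. There will be essentially no new work: the corollary is a one-line deduction, and the only choice is which of the two available structural theorems to invoke.

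The most direct route is to apply Theorem~\ref{cgh17thm}, which asserts that for a cyclic-by-$p$ group $G$, being an Oort group for $k$ is equivalent to being a local Oort group for $k$. First I would observe that $D_4$ is a $2$-group, and hence is trivially a cyclic-by-$2$ group (take the $p$-part to be all of $D_4$ and the cyclic quotient to be trivial). Then Theorem~\ref{mainthm}, applied for each algebraically closed field $k$ of characteristic $2$, tells us that $D_4$ is a local Oort group for every such $k$; Theorem~\ref{cgh17thm} then promotes this conclusion to the statement that $D_4$ is an Oort group for every such $k$, which is precisely the definition of $D_4$ being an Oort group for the prime $2$.

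Alternatively, I would give the slightly longer argument using Theorem~\ref{cgh08thm}: $G$ is an Oort group for $k$ if and only if every cyclic-by-$p$ subgroup of $G$ is a local Oort group for $k$. For $G = D_4$ and $p = 2$, the cyclic-by-$2$ subgroups are exactly the subgroups of $D_4$, namely the trivial group, several copies of $\mathbb{Z}/2\mathbb{Z}$, the unique $\mathbb{Z}/4\mathbb{Z}$, two copies of $\mathbb{Z}/2\mathbb{Z}\times\mathbb{Z}/2\mathbb{Z}$, and $D_4$ itself. Each of these groups is already known to be local Oort for $p=2$: the cyclic subgroups by the resolution of the Oort conjecture (Obus--Wewers \cite{OW14} and Pop \cite{Pop14}, with the earlier results of Green--Matignon \cite{GM98} already handling $\mathbb{Z}/4\mathbb{Z}$), the Klein four-group by Pagot's theorem \cite{pagot2002}, and $D_4$ itself by Theorem~\ref{mainthm}.

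Since both paths rely only on results stated in the introduction together with Theorem~\ref{mainthm}, there is no real obstacle to overcome; the only mild subtlety is remembering that ``Oort group for $p$'' is defined to mean ``Oort group for every algebraically closed field of characteristic $p$'', so one must apply whichever structural theorem is chosen uniformly in $k$. I expect the entire proof to fit in two or three lines of \LaTeX.
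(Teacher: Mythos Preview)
Your proposal is correct and matches the paper's own one-line deduction: the paper simply notes that Theorem~\ref{mainthm} together with Theorem~\ref{cgh08thm} (or equivalently Theorem~\ref{cgh17thm}) yields the corollary. Your observation that $D_4$ is a $2$-group (hence cyclic-by-$2$) and your enumeration of subgroups for the alternative route via Theorem~\ref{cgh08thm} are exactly the checks one would make, and the paper likewise cites both theorems as valid paths.
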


\begin{remark} One might hope to use the methods used in this paper to prove that $D_8$, or more ambitiously, $D_{2^n}$ for some $n \ge 4$, is also a local Oort group for $p=2$. However, there are at present at least two substantial obstacles to such a proof.

Firstly, the calculation of the ramification breaks (and hence the differents) of $D_4$-extensions of complete discrete valuation fields presented in Subsection~\ref{rambrkss} depends essentially on the fact that the Galois closure of any non-Galois two-level tower of $\mathbb{Z}/2\mathbb{Z}$-extensions of a field is a $D_4$-extension of that field. While $D_8$-extensions do occur as the Galois closures of smaller field extensions, there is no similarly simple class of extensions whose Galois closures are invariably $D_8$-extensions.  The situation for higher dihedral extensions is similar to that for $D_8$-extensions.

Secondly, the effective use of the `method of equicharacteristic deformation' requires a base case of extensions known to lift to characteristic zero. In the $D_4$ case, the work of Brewis in~\cite{Brewis2008} provided this base. However, neither in the $D_8$ case nor in any higher dihedral case is any extension in characteristic two known to lift to characteristic zero.
\end{remark}

\section*{Acknowledgement}

The author thanks Andrew Obus, his doctoral advisor, for the latter's considerable assistance in guiding, at a multitude of points, both the direction of the work presented in this paper and the writing of the paper itself.

\bibliography{Literature}
\bibliographystyle{alpha}

\end{document}